\newtheorem{proposition}{Proposition}
\newtheorem{lemma}{Lemma}
\newtheorem{rem}{Remark}
\newtheorem{theorem}{Theorem}
\newcommand{\p}{\mathbb{P}}
\newcommand{\e}{\mathbb{E}}
\newcommand{\ud}{\mathrm{d}}
\newcommand{\R}{\mathbb{R}}
\newcommand{\F}{\mathcal{F}}
\newcommand{\Ind}[1]{\mathbf{1}_{\{#1\}}}
\newcommand{\Expo}[1]{\exp\left\{#1\right\}}
\newcommand{\Exp}[1]{\mathbb{E}\left[#1\right]}
\newcommand{\Procon}[2]{\mathbb{P}\left(#1\big| #2 \right)}
\newcommand{\Expx}[2]{\mathbb{E}_{#1}\left[ #2 \right]}
\newcommand{\Prox}[2]{\mathbb{P}_{#1}\left( #2 \right)}
\newcommand{\Expconx}[3]{\mathbb{E}_{#1}\left[#2\big| #3 \right]}
\newcommand{\Proconx}[3]{\mathbb{P}_{#1}\left(#2\big| #3 \right)}
\newcommand{\pro}{\mathbb{P}}
\begin{document}

\title{Continuous state branching processes in  random  environment: The Brownian case.}
\author{ S. Palau\footnote{ {\sc Centro de Investigaci\'on en Matem\'aticas A.C. Calle Jalisco s/n. 36240 Guanajuato, M\'exico.} E-mail: sandra.palau@cimat.mx. Corresponding author}\,\, and J.C. Pardo\footnote{ {\sc Centro de Investigaci\'on en Matem\'aticas A.C. Calle Jalisco s/n. 36240 Guanajuato, M\'exico.} E-mail: jcpardo@cimat.mx}
}
\maketitle
\vspace{0.2in}

\begin{abstract} 
\noindent We consider continuous state branching processes that are perturbed by a Brownian motion. These processes are constructed as the unique strong solution of a stochastic differential equation. The long-term extinction and explosion behaviours are studied. In the stable case, the extinction and explosion probabilities are given explicitly. We find three regimes for the asymptotic behaviour of the explosion probability and, as in the case of branching processes in random environment, we find five regimes for the asymptotic behaviour of the extinction probability. In the supercritical regime, we study the process conditioned on eventual extinction where three regimes for the asymptotic  behaviour of the extinction probability appear. Finally, the process conditioned on non-extinction and the process with immigration are given.

\bigskip

\noindent {\sc Key words and phrases}: Continuous state branching processes in  random environment, Brownian motion, explosion and extinction probabilities, exponential functional of Brownian motion, Q-process, supercritical  process conditioned on eventual extinction, continuous state branching processes with immigration in  random environment.

\bigskip

\noindent MSC 2000 subject classifications: 60G17, 60G51, 60G80.
\end{abstract}

\vspace{0.5cm}

\section{Introduction}
A $[0,\infty]$-valued strong Markov process $Y =(Y_t, t\geq 0)$  with probabilities $(\mathbb{P}_x, \, x\geq 0)$ is called a {\it continuous-state branching process } (CB-processes for short) if it has paths that are right-continuous with left limits and its law observes the branching
property; 
 i.e. for any $x,y\geq 0$, $\mathbb{P}_{x+y}$ is equal in law to the convolution of $\mathbb{P}_x$ and $\mathbb{P}_y$. CB-processes may be thought of as the continuous (in time and space) analogues of classical Bienaym\'e-Galton-Watson  processes.  CB-processes have  been introduced by
Jirina \cite{ji} and studied by many authors including Bingham
\cite{bi}, Grey \cite{gre}, Lamperti \cite{la1}, to name but a few.  
The branching property implies  that the 
Laplace transform  of $Y_t$ satisfies
\begin{equation}\label{CBut}
\mathbb{E}_x\Big[ e^{-\lambda  Y_t}\Big]=\exp\{-x
u_t(\lambda)\},\qquad\textrm{for }\lambda\geq 0,
\end{equation}
for some function $u_t(\lambda)$.  According to Silverstein
\cite{si}, the function $u_t(\lambda):[0,\infty)\to[0,\infty]$ solves  the integral
equation
\begin{equation}\label{ut}
u_t(\lambda)+\int_0^t \psi(u_s(\lambda)){\rm d} s=\lambda,
\end{equation}
 where $\psi$ satisfies the celebrated L\'evy-Khintchine formula, i.e.
\begin{equation*}
\psi(\lambda)=-q-a\lambda+\gamma^2
\lambda^2+\int_{(0,\infty)}\big(e^{-\lambda x}-1+\lambda x{\mathbf 1}_{\{x<1\}}\big)\mu(\ud x),
\end{equation*}
where $q\ge 0$, $a\in \mathbb{R}$, $\gamma\geq 0$ and $\mu$ is a measure concentrated on $(0,\infty)$ such that
$\int_{(0,\infty)}\big(1\land x^2\big)\mu(\ud x)$ is finite. The function $\psi:[0,\infty)\to (-\infty, \infty)$ is convex and is known as the branching mechanism of
$Y$.

Observe that  $0$ and $\infty$ are two absorbing states. In other words, let
\[
T_0=\inf\{t\ge 0: Y_t=0\} \qquad \textrm{ and } \qquad T_\infty=\inf\{t\ge 0: Y_t=\infty\}
\]
denote the extinction and explosion times, respectively. Then $Y_t=0$ for every $t\ge T_0$ and $Y_t=\infty$ for every $t\ge T_\infty$.
More precisely, let   $\eta$ be the largest root of the branching mechanism $\psi$,  i.e. $\eta=\sup\{\theta \ge 0: \psi(\theta)=0\}$, (with the convention that $\sup\{\emptyset\}=\infty$). Then for every $x>0$:
\begin{itemize}
	\item[i)] if $\eta=\infty$ or if $\int^\infty  {\rm d}\theta/ \psi(\theta)=\infty$, we have $\p_x(T_0<\infty)=0$,
	\item[ii)] if $\eta<\infty$ and  $\int^\infty  {\rm d}\theta/ \psi(\theta)<\infty$, we define
	\[
	\phi(t)=\int_t^\infty\frac{{\rm d}\theta}{\psi(\theta)}, \qquad t\in (\eta, \infty).
	\]
	The mapping $\phi:(\eta,\infty) \to (0,\infty)$ is bijective, and we write  $\varphi:(0,\infty)\to (\eta,\infty)$ for its right-continuous inverse. Thus
	\[
	\p_x(T_0<t)=\exp\{-x\varphi(t)\}.
	\]
	\item[iii)] if $\eta=0$ or if $\int_{0+}  {\rm d}\theta/ |\psi(\theta)|=\infty$, we have $\p_x(T_\infty<\infty)=0$,
	\item[iv)] if $\eta>0$ and  $\int_{0+}  {\rm d}\theta/ |\psi(\theta)|<\infty$, we define
	\[
	g(t)=-\int_0^t \frac{{\rm d}\theta}{\psi(\theta)}, \qquad t\in (0,\eta).
	\]
	The mapping $g:(0,\eta) \to (0,\infty)$ is bijective, we write  $\gamma:(0,\infty)\to (0,\eta)$ for its right-continuous inverse. Thus
	\[
	\p_x(T_{\infty}>t)=\exp\{-x\gamma(t)\}.
	\]
\end{itemize}
From (ii), we deduce that $\p_x(T_0<\infty)=\exp\{-x\eta \}$. Hence, the latter identity and (i) imply that a CB-process
has a
finite time extinction a.s. if and only if
\[
\eta<\infty, \qquad \int^{\infty}\frac{\ud u}{\psi(u)}<\infty\,\qquad \text{ and }\qquad \psi'(0+)\ge 0.
\]
Similarly from (iv), we get that $\p_x(T_{\infty}<\infty)=1-\exp\{-x\eta\}$. Hence from the latter and (iii), we deduce that a CB-process
has a
finite time explosion with positive probability if and only if
\[
\int_{0+}\frac{\ud u}{|\psi(u)|}<\infty\,\qquad \text{ and }\qquad \eta>0,
\]
When $\eta<\infty$, $\eta>0$ is equivalent to $\psi'(0+)< 0$.

The value of $ \psi'(0+)$ also determines  whether its associated CB-process will, on average, decrease, remain constant or increase. More precisely, under the assumption that  $q=0$, we observe that the first moment of a CB-process can be obtained by differentiating (\ref{CBut}) with respect to  $\lambda$. In particular, we may deduce   
\[
 \mathbb{E}_x[Y_t]=x e^{-\psi'(0^+)t}, \qquad \textrm{for}\quad t\ge 0.
 \]
Hence using  the same terminology as  for Bienaym\'e-Galton-Watson  processes, in respective order, a CB-process is called {\it supercritical, critical} or {\it subcritical} depending on the behaviour of  its mean, in other words on whether $\psi'(0^+)<0$, $\psi'(0^+)=0$ or $\psi'(0^+)>0$.  

The following two examples are of special interest in this paper since the Laplace exponent, i.e. the solution of (\ref{ut}),  can be computed explicitly in a closed form.  The first example that we present is the so-called Neveu branching process (see \cite{nev}) whose branching mechanism satisfies
$$\psi(\lambda)=\lambda\log( \lambda)=c\lambda+\int_{(0,\infty)}\big(e^{-\lambda x}-1+\lambda x{\mathbf 1}_{\{x<1\}}\big)\frac{\ud x}{x^{2}}, $$ where $c\in\R$ is a suitable constant. In this case $\psi^\prime(0+)=-\infty$,  $\eta =1$,  the process is supercritical and satisfies  the integral conditions of (i) and (iii). Thus, the Neveu branching process does not explode neither become extinct at a finite time a.s. According to Theorem 12.7 in \cite{Kyp}, we have that the Neveu branching process becomes extinct at infinity with positive probability. More precisely, if $(Y_t,t\ge 0)$ denotes the  Neveu branching process, then 
\[
\mathbb{P}_x\left(\lim_{t\to \infty} Y_t=0\right)=e^{-x}, \qquad x> 0.
\]
The second example is the stable case with drift,  in other words the branching mechanism satisfies 
\[
\psi(\lambda)=-a\lambda+c_\beta \lambda^{1+\beta}=-a \lambda+c_2 \lambda^{2}\mathbf{1}_{\{\beta=1\}}+c_\beta \frac{(\beta+1)\beta}{\Gamma(1-\beta)}\mathbf{1}_{\{\beta\neq 1\}}\int_{(0,\infty)}\big(e^{-\lambda x}-1+\lambda x\mathbf{1}_{\{\beta>0\}}\big)\frac{\ud x}{x^{2+\beta}}
\] with $\beta $ in $(-1,0)\cup (0,1]$ and $c_\beta$ is a non-zero constant with the same sign as $\beta$. It is known that its associated  CB-process can be obtained by scaling limits of Bienaym\'e-Galton-Watson  processes with a fixed   reproduction law. Moreover,  the case $\beta=1$ corresponds to the so-called  Feller diffusion branching process.

The  case  $\beta\in(-1,0)$ has a particular behaviour.  Its corresponding CB-process is  supercritical, since  $\psi^\prime(0+)=-\infty$. We observe that $\eta$ is infinite or  finite according to whether $a\ge 0$
or $a<0$.  The process satisfies the integral conditions of (i) and (iv). Thus  the stable CB process with $\beta\in (-1,0)$ does not  become extinct at a finite time a.s., and  it has a
finite time explosion with positive probability. Moreover,  the asymptotic behaviour of $\p_x(T_{\infty}>t)$ can be computed explicitly.

The  case  $\beta\in(0, 1]$  has a completely different behaviour.  Its associated CB-process is subcritical, critical or  supercritical depending on the value of $a$, since $\psi^\prime(0+)=-a$, and satisfies the integral conditions in (ii) and (iii). We also have that  $\eta=0$  or $\eta>0$ according as $a\le 0$ or $a>0$. In other words,  the stable CB-process with $\beta\in (0, 1]$ does not  explode at a finite time a.s., and  it becomes extinct at a finite time  with positive probability. Moreover the asymptotic behaviour of $\p_x(T_{0}<t)$ can be computed explicitly.

For our purposes, we recall that CB-processes can also be defined as the unique non-negative strong solution of the following stochastic differential equation (SDE for short)
\begin{equation}
\begin{split}\label{csbpsde}
Y_t=&Y_0+a\int_0^t Y_s \ud s+\int_0^t \sqrt{2\gamma^2 Y_s}\ud B_s \\
&\,\,\,+\int_0^t\int_{(0,1)}\int_0^{Y_{s-}}z\widetilde{N}(\ud s,\ud z,\ud u)+\int_0^t\int_{[1,\infty]}\int_0^{Y_{s-}}zN(\ud s,\ud z,\ud u),
\end{split}
\end{equation}
where $B=(B_t,t\geq 0)$ is a standard Brownian motion, $N(\ud s,\ud z,\ud u)$ is a Poisson random measure independent of $B$, with intensity $\ud s\Lambda(\ud z)\ud u$ where $\Lambda$ is a measure on $(0,\infty]$ defined as 
$\Lambda(\ud z)=\Ind{(0,\infty)}(z)\mu(\ud z)+q\delta_\infty(\ud z)$,   and $\widetilde{N}$ is the compensated measure of $N$, see for instance \cite{FuLi}.\\

 In this work, we are interested in studying  a particular class of CB-processes  in a random environment. More precisely, we are interested in the case when the random environment is driven by a Brownian motion which is independent of the dynamics of the original process. A process in this class is defined as the unique non-negative strong solution of a stochastic differential equation that conditioned on the environment,   satisfies the branching property.  We will refer to such class of processes as {\it CB-processes in a Brownian random environment}.

Our motivation  comes from the work of B\"oinghoff and Hutzenthaler \cite{CbMh} and Hutzenthaler \cite{Hu}, where they consider the case of  branching diffusions in a Brownian random environment  i.e.  when the branching mechanism  has no jump structure.  The authors in \cite{CbMh}  introduced this type of processes using a result of Kurtz  \cite{Kurtz}, where a  diffusion appro\-xi\-mation  of  Bienaym\'e-Galton-Watson in random environment is studied. The scaling limit obtained in \cite{Kurtz} turns out to be the strong solution of an SDE, that conditioned on the environment,  satisfies the branching property.  B\"oinghoff and Hutzenthaler computed the exact asymptotic behaviour of the survival probability using a time change method and in consequence, they also described the so called $Q$-process. This is  the process conditioned to be never extinct. Similarly to the discrete case, the authors in  \cite{CbMh} found a phase transition in the subcritical regime that depends on the parameters of the random environment. Hutzenthaler studied in \cite{Hu}, the supercritical regime and found that supercritical bran\-ching diffusions in a Brownian random environment conditioned on eventual extinction also possesses a phase transition which is similar to the phase transition of the subcritical case.

Another class of CB-processes in random environment has been studied recently by Bansaye et al. \cite{Bapa}. The authors in \cite{Bapa} studied the particular case where the  random environment is driven by a L\'evy process with paths of bounded variation.  Such type of processes  are called  CB-processes with catastrophes, motivated by the fact that the presence of a  negative jump in the random environment represents that a proportion of a population, following  the dynamics of the CB-process,  is killed.  Similarly to the diffusion case,  CB-processes with catastrophes were also introduced as the strong solution of an SDE and  conditioned on the environment,  they  satisfy the branching property. It is also important to note that CB-processes with catastrophes can also be obtained as the scaling limit of Bienaym\'e-Galton-Watson processes in random environment (see for instance \cite{basim}). Bansaye et al. also studied  the survival probability but unlike the case studied in \cite{CbMh}, they used a martingale technique since the time change  technique does not hold in general.  In the particular case where the branching mechanism is stable, the authors in \cite{Bapa} computed the exact asymptotic behaviour of the survival probability and obtained similar results to those found in \cite{CbMh}.

One of our aims is to study explosion and extinction probabilities for CB-processes in a Brownian random environment. Up to our knowledge, the explosion case has never been studied before even in the discrete setting. In order to  study explosion and extinction probabilities for a process in this class, we follow the  martingale technique used in \cite{Bapa} to compute the Laplace exponent   via a backward differential equation. With the Laplace exponent in hand, we are able to determine whether  a process is conservative, i.e. that does not explode a.s. at a fixed time, or become extinct  with positive probability. Nonetheless,  it seems very difficult to deduce necessary and sufficient conditions for explosion and extinction probabilities. This is due to the fluctuations of the random environment. However, the stable and the Neveu cases will help us to understand the different situations that our main results cannot cover.

   We give special attention to the case  when the branching mechanism is stable i.e. $\psi(\lambda)=-\alpha\lambda+c_\beta\lambda^{1+\beta}$, for $\beta\in (-1, 0)\cup (0, 1]$. Here, the Laplace exponent can be computed explicitly and we will show that it depends on the exponential functional of the random environment. Whenever  $\beta\in (-1, 0)$, we can compute the explosion probability at a fixed time 
and establish the asymptotic behaviour of the probability of no-extinction where we find three different regimes. Up to our knowledge, this behaviour was never observed. In the case when $\beta\in  (0, 1]$, we study the extinction probability and also establish the asymptotic behaviour of the survival probability where five different regimes appear. In the supercritical regime, we study the process conditioned on eventual extinction and we find three regimes for the asymptotic behaviour of the probability of  survival. The asymptotic behaviour depends on the study of exponential functionals of Brownian motion.
   
 From the speed of survival in the stable case, we can deduce the process conditioned to be never extinct or  $Q$-process using a Doob $h$-transform technique.
 
 We finish this paper studying the immigration case, which represents an example of  affine processes in a random environment. More precisely, this family of processes is an extension of the so-called Cox-Ingersoll-Ross model,  which is largely used in the financial literature, under the fluctuations of a random environment.

The remainder of the paper is structured as follows. In Section 2, we define and  study CB-processes in a Brownian random environment. Section 3 is devoted to  the long-term behaviour of this family of processes. In particular, we study  explosion and  extinction probabilities. In Section 4, we analyse the stable case. Here, we study the asymptotic behaviour of the no-explosion and survival probabilities as well as the process conditioned to be never extinct and the process conditioned on eventual extinction. Finally in Section 5, we study the immigration case.

\section{CB-processes in a Brownian random environment}

Motivated by the definition of branching diffusions in random environment (see B\"oinghoff and Hutzenthaler \cite{CbMh})  and CB-processes with catastrophes (see Bansaye et al. \cite{Bapa})  we introduce, using the same notation as in the SDE (\ref{csbpsde}),   continuous state branching processes in a Brownian random environment (in short a CBBRE) as the unique non-negative strong solution of the following stochastic differential equation
\begin{equation}
\begin{split}
\label{csbpbre}
 Z_t=&Z_0+\alpha \int_0^t Z_s \ud s+\int_0^t \sqrt{2\gamma^2 Z_s}\ud B_s +\sigma\int_0^t  Z_s \ud  B^{(e)}_s  \\
&+\int_0^t\int_{(0,1)}\int_0^{Z_{s-}}z\widetilde{N}(\ud s,\ud z,\ud u)+\int_0^t\int_{[1,\infty]}\int_0^{Z_{s-}}zN(\ud s,\ud z,\ud u),
\end{split}
\end{equation}
where $B^{(e)}=(B^{(e)}_t, t\geq 0)$ is a standard Brownian motion independent of $B$ and  the Poisson random measure $N$,  and $\alpha\in \mathbb{R}$. The Brownian motion  $B^{(e)}$ represents the {\it random environment}. We also observe that   the  drift coefficient  can be written as $\alpha=a+\alpha_0$,  where $a$ is the drift of the underlying CB-process and   $\alpha_0$ is the drift of the environment. 

The following theorem provides the existence of the CBBRE as a strong solution of (\ref{csbpbre}) and, in some sense, characterizes its law given the environment. In order to introduce our main result, we define  the  auxiliary process
\begin{align*}
K_t=\sigma B^{(e)}_t -\frac{\sigma^2}{2}t, \qquad \textrm{for }\quad t\geq 0,
\end{align*}
 that represents the {\it random environment}.
\begin{theorem}
The stochastic differential equation \eqref{csbpbre} has a unique non-negative strong solution. The process $Z=(Z_t, t\geq 0)$ satisfies the Markov property  and its infinitesimal generator $\mathcal{L}$ satisfies, for every $f\in C^2_b(\bar{\R}_+),$\footnote{${\R}_+=[0,\infty)$, $\bar{\R}_+=[0,\infty]$ and $C^2_b(\bar{\R}_+)$=\{twice differentiable functions such that $f(\infty)=0$\}}
\begin{equation}\begin{split}\label{bpgenerador}
\mathcal{L}f(x)=&\alpha xf'(x)+\left(\frac{1}{2}\sigma^2x^2+\gamma^2x\right)f''(x)\\
&\hspace{2cm}+x\int_{(0,\infty]} \left(f(x+z)-f(x)-zf'(x)\Ind{z<1}\right)\Lambda(\ud z).
\end{split}
\end{equation}
Furthermore, the process $Z$ conditioned on $K$, satisfies the branching property and for every $t>0$ and $z\geq 0$
\begin{align}\label{bplaplacek}
\mathbb{E}_z\Big[\exp\Big\{-\lambda Z_t e^{-K_t}\Big\}\Big|K\Big]=\exp\Big\{-zv_t(0,\lambda,K)\Big\}\qquad a.s.,
\end{align}
where for every $(\lambda,\delta)\in(\R_+,C(\R_+))$, $v_t: s\in[0,t]\mapsto v_t(s,\lambda,\delta)$ is the unique solution of the backward differential equation
\begin{align}\label{bpbackward}
\frac{\partial}{\partial s}v_t(s,\lambda, \delta)=e^{\delta_s}\psi(v_t(s,\lambda,\delta)e^{-\delta_s}), \,\qquad v_t(t,\lambda,\delta)=\lambda,
\end{align}
and $\psi$ is the branching mechanism of the underlying CB-process.
\end{theorem}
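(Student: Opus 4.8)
The plan is to establish the four assertions of the theorem in turn, the real content being \eqref{bplaplacek}--\eqref{bpbackward}.

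\textbf{Existence, uniqueness, Markov property, generator.} Equation \eqref{csbpbre} differs from \eqref{csbpsde} only through the extra term $\sigma\int_0^t Z_s\,\ud B^{(e)}_s$, whose coefficient $z\mapsto\sigma z$ is globally Lipschitz; hence existence and pathwise uniqueness of a non-negative strong solution follow from the theory of stochastic equations for non-negative processes with jumps in \cite{FuLi}, the non-Lipschitz coefficient $\sqrt{2\gamma^2 Z_s}$ being treated by a Yamada--Watanabe argument and the state-dependent domains $\{u<Z_{s-}\}$ handled exactly as there; the atom of $\Lambda$ at $\infty$ (from $q$) makes $\infty$ an absorbing state reached by a jump, so $Z$ is $\bar{\R}_+$-valued. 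Strong uniqueness yields the (strong) Markov property in the usual way, and \eqref{bpgenerador} is obtained by applying Itô's formula to $f(Z_t)$, $f\in C^2_b(\bar{\R}_+)$, and identifying the finite-variation part: the continuous martingale part has quadratic variation $(2\gamma^2 Z_s+\sigma^2 Z_s^2)\,\ud s$, giving $(\gamma^2 x+\tfrac12\sigma^2 x^2)f''(x)$; the drift gives $\alpha x f'(x)$; and the Poisson integrals with the compensated small jumps produce $x\int_{(0,\infty]}(f(x+z)-f(x)-zf'(x)\Ind{z<1})\Lambda(\ud z)$, the truncation being exactly what keeps this integral convergent, while $f(\infty)=0$ makes sense of the contribution of the atom at $\infty$.

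\textbf{From $Z$ to an inhomogeneous branching process given the environment.} For \eqref{bplaplacek} I would work with $Y_t:=Z_t e^{-K_t}$. From $K_t=\sigma B^{(e)}_t-\tfrac12\sigma^2 t$ and Itô's formula one gets $\ud(e^{-K_t})=e^{-K_t}(-\sigma\,\ud B^{(e)}_t+\sigma^2\,\ud t)$; applying the product rule to $Z_t e^{-K_t}$, and using that $B^{(e)}$ is the only common source of randomness of $Z$ and $e^{-K}$ so that $\ud[Z^c,e^{-K}]_t=-\sigma^2 Z_t e^{-K_t}\,\ud t$, the stochastic integrals against $B^{(e)}$ cancel, and the $-\tfrac12\sigma^2 t$ in $K$ is precisely the drift that makes the remaining Itô correction and the bracket term cancel too, leaving
\begin{equation*}
\ud Y_t=\alpha Y_t\,\ud t+e^{-K_t}\sqrt{2\gamma^2 Z_t}\,\ud B_t+e^{-K_t}\left(\int_{(0,1)}\int_0^{Z_{t-}}z\,\widetilde{N}(\ud t,\ud z,\ud u)+\int_{[1,\infty]}\int_0^{Z_{t-}}z\, N(\ud t,\ud z,\ud u)\right).
\end{equation*}
Since $e^{-K_t}\sqrt{2\gamma^2 Z_t}=\sqrt{2\gamma^2 e^{-K_t}Y_t}$ and a jump of $Z$ of size $z$ becomes a jump of $Y$ of size $e^{-K_t}z$ occurring at rate $Y_{t-}e^{K_{t-}}\Lambda(\ud z)\,\ud t$, one reads off that, conditionally on the environment (formalised via a regular version of the law given $\sigma(B^{(e)})$, under which $B$ and $N$ keep their structure), $Y$ is a time-inhomogeneous Markov process whose generator at time $s$ is that of an inhomogeneous continuous-state branching process with branching mechanism $\psi_s(\lambda):=e^{K_s}\psi(\lambda e^{-K_s})$; acting on $y\mapsto e^{-\mu y}$ it gives $\psi_s(\mu)\,y\,e^{-\mu y}$. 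In particular $Y$, and hence $Z$, retains the branching property conditionally on $K$.

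\textbf{The backward equation and the exponential martingale.} Given $\lambda\ge 0$ and a continuous path $\delta\in C(\R_+)$, I would first show that \eqref{bpbackward} has a unique $[0,\infty)$-valued solution $s\mapsto v_t(s,\lambda,\delta)$ on $[0,t]$: the field $\lambda\mapsto e^{\delta_s}\psi(\lambda e^{-\delta_s})$ is locally Lipschitz on $(0,\infty)$ (recall $\psi\in C^\infty((0,\infty))$), giving local existence and uniqueness, and one excludes blow-up and exit through $0$ on $[0,t]$ by running the equation backward from the finite terminal value $\lambda$ and exploiting monotonicity and the sign structure of $\psi$ coming from its convexity, with extra care near $0$, where $\psi$ may fail to be Lipschitz, and when $q>0$, where $\psi(0)<0$. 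Then, following the martingale technique of \cite{Bapa}, write $v_t(\cdot):=v_t(\cdot,\lambda,K)$, set $N_s:=\exp\{-Y_s v_t(s)\}$ for $s\in[0,t]$, and apply Itô's formula to $g(s,y)=e^{-y v_t(s)}$ along the conditional dynamics of $Y$: the drift from $\partial_s g$ is $-Y_s\,\partial_s v_t(s)\,e^{-Y_s v_t(s)}$ and the one from the generator of $Y$ is $\psi_s(v_t(s))\,Y_s\,e^{-Y_s v_t(s)}$, and these cancel precisely by \eqref{bpbackward}, so $N$ is a conditional local martingale. Since $0\le N_s\le 1$ (with $N_s=0$ on $\{Y_s=\infty\}$), a standard localisation at the explosion time upgrades $N$ to a bounded martingale on $[0,t]$, whence $\Expcon{N_t}{K}=N_0$; recalling $v_t(t,\lambda,K)=\lambda$ and $Y_0=Z_0=z$ this is exactly \eqref{bplaplacek}, and the conditional branching property of $Z$ then follows because for each $\lambda$ the map $z\mapsto\exp\{-z\,v_t(0,\lambda,K)\}$ is the exponential of a linear function of $z$, which propagates to the full conditional law through the Markov property.

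\textbf{Main difficulty.} The first step is essentially bookkeeping on top of \cite{FuLi} together with a routine Itô computation. The genuine work is in the last two steps: setting up the regular conditional probability given the environment, proving that \eqref{bpbackward} is well posed on all of $[0,t]$ for an arbitrary (merely continuous, hence possibly very irregular) environment and that its solution stays finite and non-negative, and justifying the martingale argument through the localisation forced by the fact that $Z$, and thus $Y$, may reach $\infty$.
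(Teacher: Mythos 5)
Your strategy for the core identity \eqref{bplaplacek} is the same as the paper's: set $\widetilde{Z}_t=Z_te^{-K_t}$, apply It\^o's formula to $e^{-\widetilde{Z}_sv_t(s)}$, observe that the local-martingale condition is exactly the backward equation \eqref{bpbackward}, and use boundedness to upgrade to a true martingale; your extra care about well-posedness of \eqref{bpbackward} near $0$ and at blow-up is if anything more detailed than the paper, which simply invokes Picard--Lindel\"of via local Lipschitzness of $\psi$ on $(0,\infty)$. However, there is a genuine gap in your first step. You claim that existence and pathwise uniqueness of a non-negative strong solution of \eqref{csbpbre} ``follow from the theory in \cite{FuLi}'' because the new coefficient $\sigma z$ is Lipschitz. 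This does not work as stated: the hypotheses of the Fu--Li/Dawson--Li existence theorems impose integrability/growth conditions on the jump measure that the full equation violates (the uncompensated part over $[1,\infty]$ need not satisfy $\int_1^\infty z\,\mu(\ud z)<\infty$, and $\Lambda$ carries an atom at $\infty$ when $q>0$), and indeed the solution can explode in finite time --- this is one of the central phenomena of the paper for $\beta\in(-1,0)$. The paper therefore does \emph{not} apply \cite{FuLi} directly; it first solves the truncated equation \eqref{csbpbren} via Theorem 2.5 of \cite{DaLi}, shows the truncated solutions are consistent up to the stopping times $\tau_m=\inf\{t:Z^{(m)}_t\ge m\}$, defines $Z=\infty$ after $\lim_m\tau_m$, and then proves uniqueness of this extended solution by a separate argument distinguishing explosion by accumulation from explosion by a single infinite jump (appealing to Theorem 137 of \cite{situ}). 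Your outline is missing this localization and the post-explosion uniqueness argument, and without them the appeal to \cite{FuLi} fails.

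For the conditional branching property you take a genuinely different route from the paper. You identify the conditional dynamics of $Y=Ze^{-K}$ as a time-inhomogeneous CB-process with mechanism $\psi_s(\lambda)=e^{K_s}\psi(\lambda e^{-K_s})$ (your It\^o computation for $\ud Y_t$ is correct) and then read the branching property off the exponential-affine form of the conditional Laplace transform combined with the Markov property. The paper instead proves it directly at the level of the SDE: it takes two solutions that are independent given $B^{(e)}$, shows via L\'evy's characterization that the sum of the two diffusion integrals is again of the form $\sqrt{2\gamma^2 Z'_s}\,\ud W_s$, pastes the two Poisson random measures (plus an auxiliary one) into a single one with the right intensity as in \cite{CLU}, and concludes that $Z+\widetilde{Z}$ solves \eqref{csbpbre} from $x+y$. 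Your route is shorter and leans on the already-established Laplace transform; the paper's is self-contained at the SDE level and does not presuppose \eqref{bplaplacek}. Either is acceptable, but note that your derivation of the branching property from \eqref{bplaplacek} must be stated for the process (all finite-dimensional conditional marginals), not just the one-dimensional law, which is where the conditional Markov property genuinely enters.
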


\begin{proof} First, we  prove the existence of a unique strong solution of the SDE
\begin{equation}\label{csbpbren}
\begin{split}
 Z_t^{(n)}&=Z_0^{(n)}+\alpha\int_0^t (Z_s^{(n)}\wedge n)\ud s+\int_0^t \sqrt{2\gamma^2 (Z_s^{(n)}\wedge n) }\ud B_s +\int_0^t  (Z_s^{(n)}\wedge n) \ud  B^{(e)}_s \\
&+\int_0^t\int_{(0,1)}\int_0^{(Z_{s}^{(n)}\wedge n)-}(z\wedge n) \widetilde{N}(\ud s,\ud z,\ud u)+\int_0^t\int_{[1,\infty]}\int_0^{(Z_{s}^{(n)}\wedge n)-}(z\wedge n) N(\ud s,\ud z,\ud u).
\end{split}
\end{equation}

Let $E=\{1,2\}$ and $\pi(\ud z)=\delta_1(\ud z)+\delta_2(\ud z)$ a measure in $E$. Define $W(\ud s,\ud z):=\ud B_s\delta_1(\ud z)+\ud B_s^{(e)} \delta_2(\ud z)$, a white noise on $[0,\infty)\times E$ with intensity $\ud s\pi(\ud z)$.  Then, the SDE (\ref{csbpbren}) can be written as follows
\begin{equation}\label{csbpbren1}
\begin{split}
Z_t^{(n)}=&Z_0^{(n)}+\alpha\int_0^t (Z_s^{(n)}\wedge n) \ud s\\
&+\int_0^t\int_E\left(\Ind{z=1}\sqrt{2\gamma^2 (Z_s^{(n)}\wedge n)}+\sigma\Ind{z=2}(Z_s^{(n)}\wedge n)\right) W(\ud s, \ud z) \\
& \hspace{-.5cm}+\int_0^t\int_{(0,1)}\int_0^{(Z_s^{(n)}\wedge n)-}(z\wedge n)\tilde{N}(\ud s,\ud z,\ud u)+\int_0^t\int_{[1,\infty)}\int_0^{(Z_s^{(n)}\wedge n)-}(z\wedge n)N(\ud s,\ud z,\ud u).
\end{split}
\end{equation}

\noindent Following the notation in \cite{DaLi}, the conditions from Theorem 2.5 in Dawson and Li \cite{DaLi} are satisfied by taking the spaces; $E=\{1,2\}$, $U_0=U_1=(0,\infty]\times(0,\infty)$,  the measures; $\pi(\ud z)=\delta_1(\ud z)+\delta_2(\ud z)$, $\mu_0(\ud z,\ud u)=\Ind{z<1}\mu(\ud z)\ud u$, $\mu_1(\ud z,\ud u)=\Ind{1\leq z<\infty}\mu(\ud z)\ud u +q\delta_\infty(\ud z)\ud u$, and the functions
\begin{align*}
b(x)& =\alpha(x\wedge n),& \sigma(x,z)=&\Ind{z=1}\sqrt{2\gamma^2 (x\wedge n)}+\sigma\Ind{z=2}(x\wedge n),\\
g_{0}(x,z,u)&=(z\wedge n)\Ind{u\leq x\wedge n},&  g_1(x,z,u)&=(z\wedge n)\Ind{u\leq x\wedge n}.
\end{align*}
Then, there exists a unique non-negative strong solution to (\ref{csbpbren1}) and therefore there exists a unique non-negative strong solution to (\ref{csbpbren}).  For $m\geq 1$ let $\tau_m=\inf\{t\geq 0: Z^{(m)}_t \geq m\}$. Since $0\leq Z^{(m)}_t\leq m$ for $0\leq t<\tau_m$, the trajectory $t\mapsto Z_t^{(m)}$ has no jumps larger than $m$ on the interval $[0,\tau_m)$. Then, we have that $Z_t^{(m)}$ satisfies (\ref{csbpbre}) for $0\leq t<\tau_m$. For $n\geq m\geq 1$, let $(Y_t, t\geq 0)$ be the strong solution to    
\begin{align*}
Y_t=&Z_{\tau_m-}^{(m)}+ \alpha\int_0^t (Y_s\wedge n) \ud s+\int_0^t \sqrt{2\gamma^2 (Y_s\wedge n) }\ud B_{\tau_m+s}  \\
&+\int_0^t  (Y_s\wedge n) \ud B^{(e)}_{\tau_m+s}+\int_0^t\int_{(0,1)}\int_0^{(Y_{s}\wedge n)-}(z\wedge n) \widetilde{N}(\tau_m+\ud s,\ud z,\ud u)\\
&\hspace{5cm}+\int_0^t\int_{[1,\infty]}\int_0^{(Y_{s}\wedge n)-}(z\wedge n) N(\tau_m+\ud s,\ud z,\ud u).
\end{align*}
Next, we define $Y_t^{(n)}=Z_t^{(m)}$ for  $0\leq t< \tau_m$ and $Y_t^{(n)}=Y_{t-\tau_m}$ for $t\geq \tau_m$. It is not difficult to see that $(Y_t^{(n)})$ is a solution to (\ref{csbpbren}). By the strong uniqueness, we get that $Z_t^{(n)}=Y_t^{(n)}$ for all $t\geq 0$. In particular, $Z_t^{(n)}=Z_t^{(m)}<m$, for $0\leq t <\tau_m$. Consequently, the sequence $\{\tau_m\}_{m\ge 1}$ is non-decreasing. We define the process $(Z_t,t\geq 0)$ as follows

\[ 
Z_t = \left\{ \begin{array}{ccl}
Z_t^{(m)} & \textrm{if} & t< \tau_m, \\
\infty & \textrm{if} & t\geq \underset{m\rightarrow\infty}{\lim}\tau_m. \end{array} \right.
\] 
Therefore, the  process $(Z_t, t\ge 0)$ is a weak solution to (\ref{csbpbre}). 

Now, we consider $Z'$ and $Z''$, two solutions  of  (\ref{csbpbre}) and define 
\[
\tau'_m=\inf\{t\geq 0: Z'_t \geq m\},\qquad \tau''_m=\inf\{t\geq 0: Z''_t \geq m\},
\] 
and $\sigma_m=\tau_m'\wedge \tau''_m$. Thus, $Z'$ and $Z''$ satisfy (\ref{csbpbren}) on $[0,\sigma_m)$, implying that both processes are indistinguishable on $[0,\sigma_m)$. If $\sigma_\infty=\underset{m\rightarrow\infty}{\lim}\sigma_m<\infty$, we have two situations to consider, either $Z'$ or $Z''$ explodes at $\sigma_\infty$ continuously or  by a jump of infinite size.  If $Z'$ or $Z''$ explodes continuously then both  processes explode at the same time since they  are  indistinguishable on $[0,\sigma_m)$, for all $m\ge 1$. If $Z'$ or $Z''$ explodes by a jump of infinite size, then this jump comes from an atom of the Poisson random measure $N$, so that both processes have it. Since after this time both processes are equal to $\infty$, and since the integral with respect to the Poisson process diverges, we get that $Z'$ and $Z''$ are indistinguishable. Then according to  Theorem 137 in Situ \cite{situ}, there is a unique strong solution to (\ref{csbpbre}) that we denote by $(Z_t, t\ge 0)$. The strong Markov property follows since  there is a strong solution, the integrators are L\'evy processes and the integrand functions are not time dependent (see for instance, Theorem V.32 in Protter \cite{Protter}, where the Lipschitz property  is just needed to guarantee the existence and uniqueness of the  strong solution) and  by It\^o's formula it is easy to show that the infinitesimal generator of $(Z_t, t\ge 0)$ is given by (\ref{bpgenerador}).

 The branching property of $Z_t$ conditioned on $K$, is inherited from the branching pro\-per\-ty of the underlying CB-process.  Its proof follows similar arguments as those used in Caballero et al. \cite{CLU} (see p.77-79).
 More precisely,  we consider $(Z_t, t\ge 0)$ and $(\widetilde{Z}_t, t\ge0)$  two solutions of (\ref{csbpbre})  starting from $x$ and $y$, respectively, which are independent conditioned on the  Brownian random environment $(B^{(e)}_t, t\ge 0)$. In other words, we consider $(B^{(1)}, N^{(1)})$ and $(B^{(2)}, N^{(2)})$  the Brownian motions and Poisson random measures associated to $Z$ and $\widetilde{Z}$, respectively, which are mutually independent.  
  Then conditioned on the environment, we observe
 \begin{align*}\label{branching}
 Z^\prime_t:=Z_t+\widetilde{Z}_t=x+y&+ \alpha\int_0^t Z^\prime_s \ud s+\int_0^t \sqrt{2\gamma^2 Z_s}\ud B_s^{(1)}  \nonumber \\
 &\hspace{2cm}+\int_0^t \sqrt{2\gamma^2 \widetilde{Z}_s}\ud B_s^{(2)}+\sigma\int_0^t  Z^\prime_s \ud B^{(e)}_s+ U_t+V_t,
  \end{align*}
 where
 $$U_t:=\int_0^t\int_{[1,\infty]}\int_0^{Z_{s-}}zN^{(1)}(\ud s,\ud z,\ud u)
 +\int_0^t\int_{[1,\infty]}\int_0^{\widetilde{Z}_{s-}}zN^{(2)}(\ud s,\ud z,\ud u)
 $$ 
 and $V_t$ represents the sum of the  integral terms with the compensated Poisson random measures.
 By the L\'evy's Characterization Theorem, we deduce
 $$W_t=\int_0^t\Ind{Z^\prime_s\neq 0}\frac{\sqrt{2\gamma^2 Z_s}\ud B_s^{(1)}+\sqrt{2\gamma^2 \widetilde{Z}_s}\ud B_s^{(2)}}{\sqrt{2\gamma^2 Z^\prime_s}}  +\int_0^t\Ind{Z^\prime_s= 0}\ud B_s^{(1)}$$
 is a Brownian motion, since $\langle W\rangle_t=t$, for $t\ge 0$. 
 
 Next, we introduce $M(\ud s,\ud z,\ud u)$  an independent Poisson random measure, with intensity $\ud s\Lambda(\ud z)\ud u$ and define
\[
\begin{split}
N(\ud s,\ud z,\ud u)&=\Ind{u< Z_{s-}}N^{(1)}(\ud s,\ud z,\ud u)\\
&+\Ind{Z_{s-}<u<Z^\prime_t-}N^{(2)}(\ud s,\ud z,\ud u)+\Ind{Z^\prime_{s-}<u}M(\ud s,\ud z,\ud u).
\end{split}
\]
Using the same arguments as in \cite{CLU} (p. 78-79), one can deduce   that $N$ is a Poisson random measure with intensity $\ud s\Lambda(\ud z)\ud u$, and 
$$U_t=\int_0^t\int_{[1,\infty]}\int_0^{Z^\prime_{s-}}z N(\ud s,\ud z,\ud u) \qquad V_t=\int_0^t\int_{(0,1)}\int_0^{Z^\prime_{s-}}z\widetilde{N}(\ud s,\ud z,\ud u).$$
Therefore, given the environment,  $Z^\prime$ satisfies the SDE (\ref{csbpbre}) with $Z^\prime_0=x+y$, implying that it satisfies the branching property.

  In order to deduce (\ref{bplaplacek}), we follow similar arguments as  used in Bansaye \cite{Bapa}. To this purpose, we introduce $\widetilde{Z}_{t}=Z_{t}e^{-K_{t}}$ and take $F\in C^{1,2}(\R_+, \bar{\R}_+)$. An application of  It\^o's formula   guarantees that $F(t,\widetilde{Z}_{t})$ conditioned on $K$ is a local martingale if and only if for every $t\geq 0$,
\[
\begin{split}
0&=\int_{0}^t \left(\frac{\partial}{\partial t}F(s, \widetilde{Z}_s)+\alpha\widetilde{Z}_s\frac{\partial}{\partial x}F(s, \widetilde{Z}_s)+\gamma^2 e^{-K_s}\widetilde{Z}_s\frac{\partial^2}{\partial x^2}F(s, \widetilde{Z}_s) \right)\ud s \\
&+\int_0^t\int_0^{\infty} Z_s\left( F(s, \widetilde{Z}_s+ze^{-K_s})-F(s, \widetilde{Z}_s)-\frac{\partial}{\partial x}F(s, \widetilde{Z}_s)ze^{-K_s}\Ind{z<1}\right)\Lambda(\ud z)\ud s.
\end{split}
\]
If in addition, $F$ is bounded, it will be a true martingale if the previous equality holds. By choosing $F(s,x)=\Expo{-xv_t(s,\lambda, K)}$, where $v_t(s,\lambda,K)$ is differentiable with respect to the variable $s$, non-negative and such that  $v_t(t,\lambda, K)=\lambda$ for all $\lambda\geq 0$, we observe that $(F(s,\widetilde{Z}_s),s\leq t)$ conditioned on $K$ is a martingale if and only if 
\begin{align*}
\frac{\partial}{\partial s}v_t(s,\lambda,K)&=-qe^{K_s}-av_t(s,\lambda,K)+\gamma^2(v_t(s,\lambda,K))^2e^{-K_s}\\
&+ e^{K_s}\int_{0}^{\infty}\left(e^{-e^{-K_s}v_t(s,\lambda, K)z}-1+e^{-K_s}v_t(s,\lambda, K)z\Ind{z<1}\right)\mu(\ud z),
\end{align*}
which is equivalent to $v_t(s,\lambda, K)$ satisfying (\ref{bpbackward}). Since  $\psi$ is locally Lipschitz on $(0,\infty)$, the existence and uniqueness of $v_t$ follows from the Picard-Lindel\"of theorem. 

The above implies that  the process $(\exp\{-\widetilde{Z}_{s}v_{t}(s,\lambda, K)\}, 0\leq s\leq t)$ conditioned on $K$ is a martingale, and hence 
$$\Expconx{z}{\Expo{-\lambda \widetilde{Z_t}}}{K}=\Expconx{z}{\Expo{- \widetilde{Z}_0v_t(0,\lambda,K)}}{K}=\Expo{-zv_t(0,\lambda,K)},$$
which completes the proof.
\end{proof}
\begin{rem} Observe that in the case when $|\psi^\prime(0+)|<\infty$,  the  auxiliary process can be taken as follows
\[
K^{(0)}_t=\sigma B^{(e)}_t+ \mathbf{m}t, \qquad \textrm{for }\quad t\geq 0,
\]
where
\[
\mathbf{m}=-\psi^\prime(0+)-\frac{\sigma^2}{2}.
\]
Following  the same arguments as in the last part of the proof of the previous Theorem and  replacing $K$ with $K^{(0)}$, one can deduce that $v_t(s,\lambda, K^{(0)})$ is the unique solution to  the backward differential equation
\begin{equation}\label{edbmf}
\frac{\partial}{\partial s}v_t(s,\lambda, K^{(0)})=e^{K^{(0)}_s}\psi_0(v_t(s,\lambda,K^{(0)})e^{-K^{(0)}_s}),
\end{equation}
where
\begin{equation}\label{psi0}
\psi_0(\lambda)=-q+\gamma^2
\lambda^2+\int_{(0,\infty)}\big(e^{-\lambda x}-1+\lambda x\big)\mu(\ud x).
\end{equation}
In this case, the process $Z$ conditioned on $K^{(0)}$, satisfies, for every $t>0$ and $z\geq 0$,
\begin{align}\label{bplaplacek1}
\mathbb{E}_z\Big[\exp\Big\{-\lambda Z_t e^{-K^{(0)}_t}\Big\}\Big|K^{(0)}\Big]=\exp\Big\{-zv_t(0,\lambda,K^{(0)})\Big\}\qquad a.s.
\end{align}

\end{rem}

Before we continue with the exposition of this manuscript, we would like to provide some examples where we can compute explicitly the Laplace exponent of the CB-process in a Brownian random environment. \\

 {\bf Example 1.  (Neveu case):} The  Neveu branching process in a Brownian random environment has branching mechanism given by $\psi(u)=u\log(u)$, for $u\ge 0$. In this particular case the backward differential  equation (\ref{bpbackward}) satisfies
$$\frac{\partial}{\partial s}v_t(s,\lambda, \delta)=v_t(s,\lambda, \delta)\log(e^{-\delta_s} v_t(s,\lambda, \delta)).$$
Providing that $v_t(t,\lambda, \delta)=\lambda$, one can solve the above equation and after some straightforward computation we deduce 
$$v_t(s, \lambda, \delta)=\Expo{e^{s}\left(\int_s^t e^{-u}\delta_u \ud u+\log(\lambda)e^{-t}\right)}.$$
Hence, from identity (\ref{bplaplacek}) we get
\begin{align}\label{bplaplacekneveu}
\e_z\Big[\exp\Big\{-\lambda Z_t e^{-K_t}\Big\}\Big| K\Big]=\Expo{-z\lambda^{e^{-t}}\Expo{\int_0^t e^{-s}K_s \ud s}}\qquad \textrm{a.s.}
\end{align}
Observe that the r.v. $\int_0^t e^{-s}K_s \ud s$,  is normal distributed   with mean $- \frac{\sigma^2}{2}(1-e^{-t}-te^{-t})$ and variance $\frac{\sigma^2}{2}(1+4e^{-t}-3e^{-2t}),$ for $t\ge 0.$ In other words, the Laplace transform of $Z_t e^{-K_t}$ can be determined by the Laplace transform of a log-normal distribution which we know  exists but there is not an explicit form of it. \\

 {\bf Example  2. (Feller case):} Assume that  $\mu(0,\infty)=0$, thus the CB-process in a Brownian random environment (\ref{csbpbre}) is reduced to  the following SDE
$$Z_t=Z_0+\alpha\int_0^t  Z_s\ud s+\sigma\int_0^t  Z_s\ud B^{(e)}_s+ \int_0^t \sqrt{2\gamma^2 Z_s}\ud B_s.$$
This SDE is equivalent to the strong solution of the SDE
\begin{align*}
\ud Z_t=&\frac{\sigma^2}{2}Z_t \ud t+Z_t\ud K_t+\sqrt{2\gamma^2 Z_s}\ud B_s,\\
\ud K_t=&a_0\ud t+\sigma \ud B^{(e)}_t,
\end{align*}
where $a_0=\alpha -\sigma^2/2$,  which is the branching diffusion in random environment studied by B\"oinghoff and Hutzenthaler \cite{CbMh}.

The backward differential  equation (\ref{bpbackward}) satisfies
$$\frac{\partial}{\partial s}v_t(s,\lambda, \delta)=-\alpha v_t(s,\lambda, \delta)+\gamma^2 v^2_t(s,\lambda, \delta) e^{-\delta_s}.$$
If $v_t(t,\lambda, \delta)=\lambda$,  the above equation can be solved and after some computations one can deduce
$$v_t(s, \lambda, \delta)=e^{-\alpha s}\left((\lambda e^{\alpha t})^{-1}+ \gamma^2\int_s^t e^{-(\delta_u+\alpha u)} \ud u\right)^{-1}.$$
Hence, from identity (\ref{bplaplacek}) we get
\begin{align}\label{bplaplace2}
\e_z\Big[\exp\Big\{-\lambda Z_t e^{-(K_t+\alpha t)}\Big\}\Big| K\Big]=\Expo{-z\left(\lambda^{-1}+\gamma^2\int_0^t e^{-(K_u+\alpha u)} \ud u\right)^{-1}}\qquad \textrm{a.s.}
\end{align}
The r.v. $\int_0^t e^{-(K_u+\alpha u)} \ud u,$ is known as the exponential functional a Brownian motion with drift, and it has been deeply studied by many authors, see for instance \cite{beryor, Du, MaYor}. \\

{\bf Example  3. (Stable case): }  For our last example, we assume that  the  branching mechanism is of the form
$$\psi(\lambda)=-\alpha\lambda +c_\beta\lambda^{\beta+1}, \qquad \lambda \geq 0,$$
for some $\beta\in(-1,0)\cup(0,1)$, $\alpha\in \mathbb{R}$, and $c_\beta$ is such that
\[
\left\{ \begin{array}{ll}
 c_\beta<0 &\textrm{ if $\beta \in(-1,0),$}\\
c_\beta>0 & \textrm{ if $\beta \in (0,1).$}
 \end{array} \right .
\]
Under this assumption, the process $Z$ satisfies the following stochastic differential equation 
\begin{align}\label{csbpbrest}
Z_t =& Z_0 +\alpha\int^t_0 Z_s\ud s +\sigma\int_0^t  Z_s\ud B^{(e)}_s+\int_0^t\int_0^{\infty}\int_0^{Z_{s-}}z\widehat{N}(\ud s,\ud z,\ud u) 
\end{align}
where $ B^{(e)}$ is a standard Brownian motion   and
 \[
 \widehat{N}(\ud s,\ud z,\ud u) =\left\{ \begin{array}{ll}
 N(\ud s,\ud z,\ud u)  &\textrm{ if $\beta \in(-1,0),$}\\
 \widetilde{N} (\ud s,\ud z,\ud u) & \textrm{ if $\beta \in (0,1),$}
 \end{array} \right .
 \]
 where $N$ is an independent Poisson random measure with intensity 
\[
\frac{c_\beta\beta(\beta+1)}{\Gamma(1-\beta)}\frac{1}{z^{2+\beta}}\ud s \ud z\ud u,
\]
and $\widetilde{N}$ is its  compensated version.

In this  case, we note 
 \[
 \psi^\prime(0+) =\left\{ \begin{array}{ll}
-\infty  &\textrm{ if $\beta \in(-1,0),$}\\
-\alpha & \textrm{ if $\beta \in (0,1).$}
 \end{array} \right .
 \]
Hence, when $\beta \in (0,1)$, we have  $K^{(0)}_t=K_t+\alpha t$, for $t\ge 0$. In both cases, we use the backward differential  equation (\ref{bpbackward}) and observe that it satisfies
$$\frac{\partial}{\partial s}v_t(s,\lambda, \delta)=-\alpha v_t(s,\lambda, \delta)+c_\beta  v^{\beta+1}_t(s,\lambda, \delta) e^{-\beta \delta_s}.$$
Similarly to the Feller case, assuming that  $v_t(t,\lambda, \delta)=\lambda$, we can solve the above equation  and after some straightforward computations, we get
$$v_t(s, \lambda, \delta)=e^{-\alpha s}\left((\lambda e^{\alpha t})^{-\beta}+\beta c_\beta\int_s^t e^{-\beta(\delta_u+\alpha u)} \ud u\right)^{-1/\beta}.$$
Hence, from (\ref{bplaplacek}) we get the following a.s. identity
\[
\e_z\Big[\exp\Big\{-\lambda Z_t e^{-K_t}\Big\}\Big| K\Big]=\Expo{-z\left((\lambda e^{\alpha t})^{-\beta}+\beta c_\beta\int_0^t e^{-\beta(K_u+\alpha u)} \ud u\right)^{-1/\beta}},
\]
which clearly implies the following a.s. identity
\begin{align}\label{bplaplace3}
\e_z\Big[\exp\Big\{-\lambda Z_t e^{-(K_t+\alpha t)}\Big\}\Big| K\Big]=\Expo{-z\left(\lambda^{-\beta}+\beta c_\beta\int_0^t e^{-\beta(K_u+\alpha u)} \ud u\right)^{-1/\beta}}.
\end{align}
We finish this example by observing that the  r.v. $\int_0^t e^{-\beta (K_u+\alpha u)} \ud u$ is the exponential functional of the Brownian motion with drift $(\beta (K_u+\alpha u), u\ge 0)$. 
\section{Long-term behaviour}
Similarly to the CB-processes case,  there are three events which are of immediate concern for the process $Z$, {\it explosion},  {\it absorption} and {\it extinction}. Recall that the event of explosion at fixed time $t$, is given by  $\{Z_t=\infty\}$. When $\mathbb{P}_z(Z_t<\infty)=1$, for all $t>0$ and $z>0$, we say the process is {\it conservative}. In the second event, we observe from the definition of $Z$ that if $Z_t=0$ for some  $t>0$, then $Z_{t+s}=0$ for all $s\ge 0$, which makes $0$ an absorbing state. As $Z_t$ is to be thought of as the size of a given population at time $t$, the event $\{\lim_{t\to \infty}Z_t=0 \}$ is referred as {\it extinction}.

Up to our knowledge, {\it explosion} has never been studied before for branching processes in random environment even in the discrete setting. Most of the results that appear in the literature are related to {\it extinction}. In this section, we first provide a sufficient condition under which the process $Z$ is conservative and an example where we can determine explicitly the probability of explosion. Under the condition that the process is conservative, we study the probability of extinction under the  influence of the random environment.

In our particular case,  the events of explosion and absorption are not so easy to deduce in full generality. In the next section, we provide an example under which both events can be computed explicitly, as well as their asymptotic behaviour when time increases.

\subsection{Explosion and conservative processes}

Recall that $\psi^\prime(0+)\in [ -\infty, \infty)$, and that whenever $|\psi^\prime(0+)|<\infty$, we write
\[
\mathbf{m}=-\psi^\prime(0+)-\frac{\sigma^2}{2}.
\]
The following proposition provides necessary conditions under which the process $Z$ is conservative.
\begin{proposition} Assume that $q=0$ and  $|\psi'(0+)|<\infty$,  then a CBBRE  with  branching mechanism $\psi$ is conservative.
\end{proposition}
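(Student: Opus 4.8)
The plan is to read conservativeness off the conditional Laplace transform (\ref{bplaplacek1}) together with the backward equation (\ref{edbmf}), the decisive point being that when $q=0$ the drift-free branching mechanism $\psi_0$ of (\ref{psi0}) is non-negative on $[0,\infty)$, which makes $s\mapsto v_t(s,\lambda,\delta)$ monotone. Since $|\psi'(0+)|<\infty$ by hypothesis, Remark 1 applies, and taking expectations in (\ref{bplaplacek1}) via the tower property gives, for every $t>0$, $z>0$ and $\lambda>0$,
\[
\mathbb{E}_z\Big[\exp\big\{-\lambda Z_t e^{-K^{(0)}_t}\big\}\Big]=\mathbb{E}\Big[\exp\big\{-z\,v_t(0,\lambda,K^{(0)})\big\}\Big].
\]
As $K^{(0)}_t$ is almost surely finite, the events $\{Z_t=\infty\}$ and $\{Z_t e^{-K^{(0)}_t}=\infty\}$ agree up to a null set, and by monotone convergence the left-hand side increases to $\mathbb{P}_z(Z_t e^{-K^{(0)}_t}<\infty)=\mathbb{P}_z(Z_t<\infty)$ as $\lambda\downarrow 0$. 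Hence it suffices to show that this limit equals $1$.

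The key step is the monotonicity of $v_t$ in its time variable. Using $q=0$ and the elementary inequality $e^{-u}-1+u\ge 0$ for $u\ge 0$, one has $\psi_0(\lambda)=\gamma^2\lambda^2+\int_{(0,\infty)}(e^{-\lambda x}-1+\lambda x)\mu(\ud x)\ge 0$ for all $\lambda\ge 0$. Fix a continuous environment path $\delta$. By Theorem 1 and Remark 1, $s\mapsto v_t(s,\lambda,\delta)$ is the unique non-negative solution on $[0,t]$ of $\partial_s v_t(s,\lambda,\delta)=e^{\delta_s}\psi_0(v_t(s,\lambda,\delta)e^{-\delta_s})$ with terminal value $v_t(t,\lambda,\delta)=\lambda$; since the right-hand side is non-negative, this solution is non-decreasing in $s$, so $0\le v_t(0,\lambda,\delta)\le v_t(t,\lambda,\delta)=\lambda$. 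Applying this with $\delta=K^{(0)}$ yields $0\le v_t(0,\lambda,K^{(0)})\le\lambda$ almost surely, and therefore
\[
e^{-z\lambda}\le\mathbb{E}\Big[\exp\big\{-z\,v_t(0,\lambda,K^{(0)})\big\}\Big]\le 1.
\]
Letting $\lambda\downarrow 0$ squeezes $\mathbb{E}_z[\exp\{-\lambda Z_t e^{-K^{(0)}_t}\}]$ to $1$, whence $\mathbb{P}_z(Z_t<\infty)=1$ for every $t>0$ and $z>0$; i.e.\ $Z$ is conservative.

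I do not anticipate a genuine obstacle; the only care needed is bookkeeping: that $v_t(0,\lambda,\cdot)$ is well defined, non-negative and measurable as a functional of the environment (provided by Theorem 1 and Remark 1, with $\psi_0$ locally Lipschitz on $[0,\infty)$ so that the Picard-Lindel\"of/uniqueness argument survives at the boundary point $0$), and the routine exchange of limit and expectation. It is worth pointing out why both hypotheses are genuinely used: if $q>0$ then $\psi_0(0)=-q<0$, and if $|\psi'(0+)|=\infty$ one cannot pass to the non-negative mechanism $\psi_0$; in either case the monotonicity of $v_t$ breaks down, consistent with the explosion phenomena recalled in the introduction.
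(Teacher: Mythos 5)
Your proof is correct and follows essentially the same route as the paper: both reduce conservativeness to showing $\lim_{\lambda\downarrow 0}v_t(0,\lambda,K^{(0)})=0$ and obtain the key bound $0\le v_t(0,\lambda,K^{(0)})\le\lambda$ from the non-negativity of $\psi_0$ when $q=0$. The only (cosmetic) difference is that the paper derives this bound from the integrated representation $v_t(s,\lambda,K)=\lambda\exp\{-\int_s^t\Phi(e^{-K_r}v_t(r,\lambda,K))\,\ud r\}$ with $\Phi(\lambda)=\lambda^{-1}\psi_0(\lambda)$ increasing and vanishing at $0$, whereas you read off the monotonicity of $s\mapsto v_t(s,\lambda,\delta)$ directly from the sign of the right-hand side of the backward equation.
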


\begin{proof} Recall that under our assumption, the auxiliary process  takes the form
\[
K^{(0)}_t=\sigma B^{(e)}_t+ \mathbf{m}t, \qquad \textrm{for }\quad t\geq 0,
\]
and that  $v_t(s,\lambda, K^{(0)})$ is the unique solution to  the backward differential equation (\ref{edbmf}) with $q=0$.
From identity (\ref{bplaplacek1}), we know that
\[
\mathbb{E}_z\Big[\exp\Big\{-\lambda Z_t e^{-K^{(0)}_t}\Big\}\Big]=\mathbb{E}\Big[\exp\Big\{-zv_t(0,\lambda,K^{(0)})\Big\}\Big].
\]
Thus if we take limits as $\lambda\downarrow 0$, we deduce that for $z>0$
\[
\mathbb{P}_z\big(Z_t<\infty\big)=\underset{\lambda\downarrow 0}{\lim}\,\mathbb{E}_z\Big[\exp\Big\{-\lambda Z_t e^{-K^{(0)}_t}\Big\}\Big]=\Exp{\Expo{-z\underset{\lambda\downarrow 0}{\lim}\ v_t(0,\lambda,K^{(0)})}},
\]
where the limits are justified by Monotonicity and Dominated Convergence. This implies that  a CBBRE  is conservative if and only if 
\[
\lim_{\lambda \downarrow 0}v_t(0,\lambda,K^{(0)})=0.
\]
Let us introduce the function $\Phi(\lambda)=\lambda^{-1}\psi_0(\lambda)$, where $\psi_0$ is given by \eqref{psi0} and  observe that  $\Phi(0)=\psi'_0(0+)=0$. Since $\psi_0$ is convex, we deduce that $\Phi$ is increasing. Finally, if we solve equation (\ref{edbmf}) with $\psi_0(\lambda)=\lambda \Phi(\lambda)$, we get 
$$v_t(s,\lambda,K)=\lambda \Expo{-\int_s^t \Phi(e^{-K_r}v_t(r,\lambda,K))\ud r}.$$
Therefore, since $\Phi$ is increasing and $\Phi(0)=0$, we have
$$0\leq\underset{\lambda\rightarrow 0}{\lim}v_t(0,\lambda,K)=\underset{\lambda\rightarrow 0}{\lim}\lambda\Expo{-\int_0^t \Phi(e^{-K_r}v_t(r,\lambda,K))\ud r}\leq \underset{\lambda\rightarrow 0}{\lim}\lambda=0,$$
implying that $Z$ is conservative.

\end{proof}
Recall that in  the case when there is no random environment, i.e. $\sigma=0$,  we know that a CB-process with  branching mechanism $\psi$ is conservative if and only if
$$\int_{0+} \frac{\ud u}{|\psi(u)|}=\infty.$$

In the case when the random environment is present, it is not so clear how to get a  necessary and sufficient condition in terms of the branching mechanism since the random environment is affecting the monotonicity of $\psi$ in the backward differential equation (\ref{bpbackward}).

We now provide  two interesting examples in the case when $\psi^\prime(0+)=-\infty$, that behave completely differently.\\

{\bf1. Stable case with $\beta\in(-1,0)$.} Recall that in this case $\psi(u)=-\alpha u +c_\beta u^{\beta+1}$, where $\alpha\in \mathbb{R}$ and $c_\beta$ is a negative constant. From straightforward computations, we get
\[
 \psi^\prime(0+)=-\infty,\qquad \mathrm{and}\qquad \int_{0+}\frac{\ud u}{|\psi(u)|}<\infty,
\]
On the other hand, from identity (\ref{bplaplace3}) and taking limits as $\lambda\downarrow 0$, we deduce that for $z>0$
\begin{align}\label{eqexplo}
\mathbb{P}_z\Big( Z_t < \infty\Big|K\Big)=\Expo{-z\left(\beta c_\beta\int_0^t e^{-\beta(K_u+\alpha u)} \ud u\right)^{-1/\beta}}\qquad \textrm{a.s.,}
\end{align}
implying 
\[
\mathbb{P}_z\Big( Z_t =\infty\Big | K\Big)=1-\Expo{-z\left(\beta c_\beta\int_0^t e^{-\beta(K_u+\alpha u)} \ud u\right)^{-1/\beta}}>0.
\]
In other words the stable CBBRE  with $\beta \in (-1, 0)$ explodes with positive probability for any $t>0$. Moreover, if  the process
$(K_u+\alpha u, u\ge 0)$ does not drift to $-\infty$, i.e. $\alpha-\sigma^2/2\ge 0$, 
we deduce from Theorem 1 in Bertoin and Yor \cite{beryor}, 
\begin{equation*}
\int_0^t e^{-\beta(K_u+\alpha u)} \ud u\to\infty, \qquad \textrm{as} \quad t\to \infty,
\end{equation*}
implying 
 \[
\lim_{t\to \infty} Z_t=\infty,\qquad \textrm{a.s.}
\]
On the other hand, if the process $(K_u+\alpha u, u\ge 0)$ drifts to $-\infty$, i.e. $\alpha-\sigma^2/2< 0$, we have an interesting long-term behaviour of the process $Z$. In fact, we deduce from the Dominated Convergence Theorem
\[
\mathbb{P}_z\Big( Z_\infty =\infty\Big)=1-\mathbb{E}\left[\Expo{-z\left(\beta c_\beta\int_0^\infty e^{-\beta(K_u+\alpha u)} \ud u\right)^{-1/\beta}}\right].
\]
The above probability is positive since 
\[
\int_0^\infty e^{-\beta(K_u+\alpha u)} \ud u<\infty\qquad\textrm{a.s.},
\]
according to Theorem 1 in Bertoin and Yor \cite{beryor}. In  this particular  case, we will discuss the  asymptotic behaviour of the probability of explosion  in  Section 4.\\

{\bf 2. Neveu case.} In this case, recall that  $\psi(u)=u\log u$. In particular
\[
 \psi^\prime(0+)=-\infty\qquad \textrm{and}\qquad \int_{0+}\frac{\ud u}{|u\log u|}=\infty.
\]
By taking limits as $\lambda\downarrow 0$ in (\ref{bplaplacekneveu}),  one can see that the process is conservative  conditionally on  the environment, i.e. 
\begin{align*}
\Proconx{z}{Z_t<\infty}{K}&=1,
\end{align*}
for all $t\in (0,\infty)$ and $z\in[0,\infty)$.

\subsection{Extinction probabilities}
Here, we consider CBBREs  that are conservative. The following result provides a criteria that depends on the behaviour of the auxiliary process $K^{(0)}$, which allows us to compute the probability of extinction of a CBBRE. Recall that the event of extinction is equal to $\{\lim_{t\to \infty}Z_t=0\}$.
\begin{proposition} Assume that $q=0$ and $|\psi^\prime(0+)|<\infty$.  
Let $(Z_t, t\ge 0)$ be a CBBRE with  branching mechanism given by $\psi$ and $z>0$.
\begin{enumerate}
\item[i)] If $\mathbf{m}<0$, then $\mathbb{P}_z\Big(\underset{t\rightarrow\infty}{\lim} Z_t=0\Big|K^{(0)}\Big)=1$, a.s.
\item[ii)] If $\mathbf{m}=0$, then $\mathbb{P}_z\left(\underset{t\rightarrow\infty}{\liminf} Z_t=0\Big| K^{(0)}\right)=1$, a.s. Moreover if $\gamma>0$ then 
\[
\mathbb{P}_z\Big(\underset{t\rightarrow\infty}{\lim}Z_t=0\Big|K^{(0)}\Big)=1,  \textrm{a.s.}
\]
\item[iii)] If $\mathbf{m}>0$ and 
\[
 \int^\infty x\ln (x)\, \mu({\rm d} x)<\infty,
\]  
then $\mathbb{P}_z\Big(\underset{t\rightarrow\infty}{\liminf}Z_t>0\Big|K^{(0)}\Big)>0 $ a.s.,  and there exists a non-negative finite r.v. $W$ such that
$$Z_te^{-K^{(0)}_t}\underset{t\rightarrow \infty}{\longrightarrow}W,\ \textrm{a.s} \qquad \textrm{and}\qquad \big\{W=0\big\}=\Big\{\lim_{t\rightarrow\infty}Z_t=0\Big\}.$$
Moreover if $\gamma>0$, we have
\[
\mathbb{P}_z\left(\lim_{t\to \infty}Z_t=0\Big| K^{(0)}\right)>0, \quad \textrm{a.s.}
\]
and, 
\[
 \mathbb{P}_z\left(\lim_{t\to \infty}Z_t=0\right)\ge \left(1+\frac{z\sigma^2}{\gamma^2}\right)^{-\frac{2\mathbf{m}}{\sigma^{2}}}.
\]
\end{enumerate}
\end{proposition}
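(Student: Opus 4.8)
The plan is to reduce everything to the conditional Laplace transform (\ref{bplaplacek1}) and the backward ODE (\ref{edbmf}) for $v_t$, together with the elementary fact that $W_t:=Z_te^{-K^{(0)}_t}$ is, conditionally on $K^{(0)}$, a non-negative martingale. First I would record, exactly as in the proof of the previous proposition, the representation
$$v_t(s,\lambda,K^{(0)})=\lambda\,\Expo{-\int_s^t\Phi\big(e^{-K^{(0)}_r}v_t(r,\lambda,K^{(0)})\big)\,\ud r},\qquad \Phi(u)=u^{-1}\psi_0(u),$$
with $\Phi$ nondecreasing and $\Phi(0)=\psi_0'(0^+)=0$ (recall $q=0$, so by (\ref{psi0}) and convexity $\psi_0(\lambda)\ge\gamma^2\lambda^2$). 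Since $v_t(0,\lambda,K^{(0)})\le\lambda$ and $v_t(0,\lambda,K^{(0)})/\lambda\to1$ as $\lambda\downarrow0$, differentiating (\ref{bplaplacek1}) at $\lambda=0^+$ (legitimate by monotone convergence) gives $\Expconx{z}{Z_te^{-K^{(0)}_t}}{K^{(0)}}=z$; combining this with the Markov and conditional branching properties, and with the fact that $(K^{(0)}_{s+\cdot}-K^{(0)}_s)$ is again an auxiliary process, one gets $\Expcon{W_t}{\F_s\vee\sigma(K^{(0)})}=W_s$. Hence $W_t\to W$ a.s., with $W\in[0,\infty)$, and by the strong law of large numbers $K^{(0)}_t/t\to\mathbf{m}$ a.s.

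Cases (i) and the first part of (ii) are then immediate. If $\mathbf{m}<0$ then $K^{(0)}_t\to-\infty$, so $Z_t=W_te^{K^{(0)}_t}\to W\cdot0=0$ a.s. If $\mathbf{m}=0$ then $\liminf_{t\to\infty}K^{(0)}_t=-\infty$ a.s., so along any sequence $t_n\uparrow\infty$ with $K^{(0)}_{t_n}\to-\infty$ one has $Z_{t_n}=W_{t_n}e^{K^{(0)}_{t_n}}\to0$, whence $\liminf_{t\to\infty}Z_t=0$ a.s.

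For every statement involving $\gamma>0$ I would compare with the Feller CBBRE $Z^F$ having the same $\gamma$ and $\sigma$ and drift chosen so that its auxiliary process is again $K^{(0)}$; its branching mechanism satisfies $\psi_0^F(\lambda)=\gamma^2\lambda^2\le\psi_0(\lambda)$. Since $y\mapsto e^{\delta}\psi_0(ye^{-\delta})$ is nondecreasing ($\psi_0'\ge0$), a standard comparison of the backward ODEs (\ref{edbmf}) yields $v_t(0,\lambda,K^{(0)})\le v^F_t(0,\lambda,K^{(0)})$ for all $\lambda,t$. Letting $\lambda\uparrow\infty$, then $t\uparrow\infty$, and using $\Prox{z}{Z_t=0\mid K^{(0)}}=\Expo{-z\lim_{\lambda\to\infty}v_t(0,\lambda,K^{(0)})}$ (apply (\ref{bplaplacek1}) with $\lambda$ replaced by $\lambda e^{K^{(0)}_t}$) together with the monotonicity of $t\mapsto\{Z_t=0\}$, I obtain
$$\Prox{z}{\lim_{t\to\infty}Z_t=0\,\Big|\,K^{(0)}}\ \ge\ \Prox{z}{\lim_{t\to\infty}Z^F_t=0\,\Big|\,K^{(0)}}\ =\ \Expo{-z\Big(\gamma^2\int_0^\infty e^{-K^{(0)}_u}\,\ud u\Big)^{-1}},$$
where the last equality is (\ref{bplaplace2}). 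When $\mathbf{m}\le0$, Theorem~1 of Bertoin--Yor gives $\int_0^\infty e^{-K^{(0)}_u}\,\ud u=\infty$ a.s., so this bound equals $1$: this proves the ``moreover'' in (ii). When $\mathbf{m}>0$, the integral is a.s.\ finite and positive, so the bound is a.s.\ positive; taking expectations and using Dufresne's identity $\int_0^\infty e^{-K^{(0)}_u}\,\ud u\stackrel{d}{=}2(\sigma^2\gamma_\nu)^{-1}$, with $\gamma_\nu\sim\mathrm{Gamma}(\nu)$ and $\nu=2\mathbf{m}/\sigma^2$, gives $\Prox{z}{\lim_{t\to\infty}Z_t=0}\ge(1+\tfrac{z\sigma^2}{2\gamma^2})^{-2\mathbf{m}/\sigma^2}\ge(1+\tfrac{z\sigma^2}{\gamma^2})^{-2\mathbf{m}/\sigma^2}$.

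It remains to treat the rest of (iii), where $\mathbf{m}>0$ (hence $K^{(0)}_t\to+\infty$ a.s.); the key point is that $W$ is nontrivial. From the representation and monotonicity of $\Phi$, $v_t(0,\lambda,K^{(0)})\ge\lambda\,\Expo{-\int_0^t\Phi(\lambda e^{-K^{(0)}_r})\,\ud r}$; since $K^{(0)}_r\ge\tfrac{\mathbf{m}}{2}r$ past an a.s.\ finite $R$, the tail is dominated by $\int_0^\infty\Phi(\lambda e^{-\mathbf{m}r/2})\,\ud r=\tfrac{2}{\mathbf{m}}\int_0^\lambda u^{-2}\psi_0(u)\,\ud u$, which is finite exactly under the hypothesis $\int^\infty x\ln x\,\mu(\ud x)<\infty$ (its jump part is $\int\mu(\ud x)\,x\int_0^{\lambda x}v^{-2}(e^{-v}-1+v)\,\ud v$, comparable near infinity to $\int^\infty x(1+\ln^+x)\,\mu(\ud x)$). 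Thus $\int_0^\infty\Phi(\lambda e^{-K^{(0)}_r})\,\ud r<\infty$ a.s., so $v_\infty(\lambda,K^{(0)}):=\lim_{t\to\infty}v_t(0,\lambda,K^{(0)})>0$ a.s.; hence $\Expconx{z}{e^{-\lambda W}}{K^{(0)}}=e^{-zv_\infty(\lambda,K^{(0)})}<1$ a.s., i.e.\ $\Prox{z}{W>0\mid K^{(0)}}>0$ a.s. Since $K^{(0)}_t\to\infty$, on $\{W>0\}$ one has $Z_t=W_te^{K^{(0)}_t}\to\infty$, so $\Prox{z}{\liminf_{t\to\infty}Z_t>0\mid K^{(0)}}\ge\Prox{z}{W>0\mid K^{(0)}}>0$ a.s.; combined with the Feller bound above this also gives the remaining $\gamma>0$ assertions. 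For the identity $\{W=0\}=\{\lim_{t\to\infty}Z_t=0\}$, the inclusion ``$\supseteq$'' is immediate, since on $\{\lim Z_t=0\}$ both factors of $W_t=Z_te^{-K^{(0)}_t}$ vanish in the limit; for ``$\subseteq$'' I would write, via the conditional branching property, $\Procon{W=0}{\F_s\vee\sigma(K^{(0)})}=\Expo{-Z_s\,v_\infty(\infty,\theta^sK^{(0)})}$ with $\theta^sK^{(0)}=(K^{(0)}_{s+u}-K^{(0)}_s)_{u\ge0}$ and $v_\infty(\infty,\cdot)=\lim_{\lambda\to\infty}v_\infty(\lambda,\cdot)$, and then use Lévy's $0$--$1$ law to deduce that on $\{W=0\}$ one has $Z_s\,v_\infty(\infty,\theta^sK^{(0)})\to0$; the lower bound obtained above for $v_\infty(1,\cdot)$, applied to the shifted (equidistributed) environments together with a Borel--Cantelli argument based on the finiteness of $\Exp{\int_0^\infty\Phi(e^{-(\sigma B_r+\mathbf{m}r)})\,\ud r}$ guaranteed by the $x\ln x$ hypothesis, forces $Z_s\to0$ on $\{W=0\}$. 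This last step — controlling $v_\infty(\infty,\theta^sK^{(0)})$ from below uniformly in $s$ — is the main obstacle; everything else is the martingale convergence theorem, an ODE comparison, and the explicit Feller/Dufresne computation.
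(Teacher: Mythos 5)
Most of your argument coincides with the paper's: the supermartingale convergence of $Z_te^{-K^{(0)}_t}$ giving (i) and the first half of (ii); the bound $v_t(s,\lambda,K^{(0)})\le\big(\lambda^{-1}+\gamma^2\int_s^te^{-K^{(0)}_r}\,\mathrm{d}r\big)^{-1}$ (which is exactly your Feller comparison, obtained in the paper by integrating $\partial_s v_t\ge\gamma^2v_t^2e^{-K^{(0)}_s}$) combined with Bertoin--Yor and Dufresne for all the $\gamma>0$ assertions; and the lower bound $v_t(0,\lambda,K^{(0)})\ge\lambda\exp\{-\int_0^t\Phi(\lambda e^{-K^{(0)}_r})\,\mathrm{d}r\}$ to show $W$ is nondegenerate under the $x\ln x$ condition. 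Your two departures are harmless and arguably cleaner where they apply: you replace the paper's appeal to Salminen--Yor by the law of large numbers ($K^{(0)}_r\ge\tfrac{\mathbf m}{2}r$ eventually), which suffices since only the sufficiency direction is needed; and you keep the factor $2$ in the Dufresne computation, obtaining the sharper bound $\big(1+\tfrac{z\sigma^2}{2\gamma^2}\big)^{-2\mathbf m/\sigma^2}$ before weakening it to the stated one (this is also what the paper's Remark 1 records).

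The genuine gap is the inclusion $\{W=0\}\subseteq\{\lim_{t\to\infty}Z_t=0\}$ in part (iii). The reverse inclusion is trivial (when $\mathbf m>0$, $e^{-K^{(0)}_t}\to0$), but for the forward one your L\'evy $0$--$1$ law argument reduces the problem to showing that on $\{W=0\}$ one has $Z_s\,v_\infty(\infty,\theta^sK^{(0)})\to0$ \emph{implies} $Z_s\to0$, i.e.\ to a lower bound on $v_\infty(\infty,\theta^sK^{(0)})$ that is uniform (or at least has positive $\liminf$) along the random time parameter $s$. Stationarity of the shifted environments gives you positivity for each fixed $s$ but not a positive $\liminf$, and the Borel--Cantelli step you gesture at is not carried out; you yourself flag this as "the main obstacle". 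So as written the set identity $\{W=0\}=\{\lim_{t\to\infty}Z_t=0\}$ is not proved. The paper does not prove it from scratch either: it invokes the conditional branching property together with Lemma 20 of Bansaye, Pardo and Smadi \cite{Bapa}, which is precisely the statement that closes this step (roughly, one identifies $\mathbb{P}_z(W=0\,|\,K^{(0)})$ with $\mathbb{P}_z(\lim_tZ_t=0\,|\,K^{(0)})$ and uses that one event contains the other, so equal conditional probabilities force a.s.\ equality of the events). Either import that lemma as the paper does, or complete the uniform lower bound; without one of the two, this portion of (iii) remains open in your write-up.
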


\begin{proof} Recall that under our assumption, i.e. $|\psi^\prime(0+)|<\infty$,   the auxiliary process can be written as 
\[
K^{(0)}_t=\sigma B^{(e)}_t+ \mathbf{m}t, \qquad \textrm{for }\quad t\geq 0,
\]
and   the function $v_t(s,\lambda, K^{(0)})$ satisfies the backward differential equation (\ref{edbmf}).

Similarly to  the last part of the proof of Theorem 1, one can prove that $Z_te^{-K^{(0)}_t}$ is a non-negative local martingale. Therefore $Z_te^{-K^{(0)}_t}$ is a non-negative supermartingale and it converges a.s. to a non-negative finite random variable, here denoted by  $W$.  This implies the statement of part (i) and the first statement of  part  (ii). 

 In order to prove the second statement of part (ii), we  observe that  if   $\gamma>0$, then the backward differential equation (\ref{edbmf}) satisfies
\begin{align*}
\frac{\partial}{\partial s}{v}_t(s,\lambda, K^{(0)})\ge \gamma^2{v}_t(s,\lambda,K^{(0)})^2 e^{-K^{(0)}_s}.
\end{align*}
Therefore
\[
v_t(s, \lambda, K^{(0)}) \le \left(\frac{1}{\lambda}+\gamma^2\int_s^te^{-K^{(0)}_s}{\rm d}s\right)^{-1}, 
\]
which implies the following inequality,
\begin{equation}\label{desgamma}
\mathbb{P}_{z}(Z_t=0| K^{(0)})\ge \exp\left\{-z\left(\gamma^2\int_0^te^{-K^{(0)}_s}{\rm d}s\right)^{-1}\right\}.
\end{equation}
Since  $\mathbf{m}\le0$, we deduce from Theorem 1 in Bertoin and Yor \cite{beryor} 
\begin{equation}\label{BerYor1}
\int_0^te^{-K^{(0)}_s}{\rm d}s\to\infty, \qquad \textrm{as} \quad t\to \infty.
\end{equation}
In consequence, we have
\[
\mathbb{P}_z\left(\lim_{t\to \infty}Z_t=0\Big|K^{(0)} \right)=1 \quad \textrm{a.s.}
\]

  Now, we prove part (iii). We first note that  ${v}_t(\cdot,\lambda, K^{(0)})$, the solution to the backward differential equation (\ref{edbmf}), is non-decreasing on $[0,t]$ (since $\psi_0$ is positive). Thus for all $s\in[0,t]$, ${v}_t(s,\lambda,K^{(0)})\leq \lambda$.  From the proof of part (ii) in Proposition 1, we know that the function $\Phi(\lambda)=\lambda^{-1}\psi_0(\lambda)$,  is increasing.   Hence
\begin{align*}
\frac{\partial}{\partial s}{v}_t(s,\lambda, K^{(0)})&={v}_t(s,\lambda,K^{(0)})\Phi({v}_t(s,\lambda, K^{(0)})e^{-K^{(0)}_s} )\leq {v}_t(s,\lambda,K^{(0)})\Phi(\lambda e^{-K^{(0)}_s}).
\end{align*}
Therefore, for every $s\le t$, we have
\[
v_t(s, \lambda, K^{(0)})\ge \lambda \exp\left\{-\int_{s}^t\Phi(\lambda e^{-K^{(0)}_s}){\rm d} s \right\}.
\]
In particular, 
\[
\liminf_{t\to\infty} v_t(0, \lambda, K^{(0)})\ge \lambda \exp\left\{-\int_{0}^\infty\Phi(\lambda e^{-K^{(0)}_s}){\rm d} s \right\}.
\]
According to Proposition 3.3  in Salminen and Yor \cite{SY}, we have
\[
\int_{0}^\infty\Phi(\lambda e^{-K^{(0)}_s}){\rm d} s <\infty \quad \textrm{a.s.,} \qquad \textrm{if and only if}\qquad \int_{0}^\infty\Phi(\lambda e^{-y}){\rm d} y <\infty.
\]
Observe 
\[
\begin{split}
\int_{0}^\infty\Phi(\lambda e^{-y}){\rm d} y &=\int_{0}^\lambda\frac{\Phi(\theta)}{\theta}{\rm d} \theta\\
&=\gamma^2\lambda +\int_0^\lambda \frac{{\rm d} \theta}{\theta^2}\int_{(0,\infty)} (e^{-\theta x}-1+\theta x)\mu ({\rm d}x)\\
&=\gamma^2\lambda +\int_{(0,\infty)}\mu ({\rm d}x)\int_0^\lambda (e^{-\theta x}-1+\theta x)\frac{{\rm d} \theta}{\theta^2} \\
&=\gamma^2\lambda +\int_{(0,\infty)} x\left(\int_0^{\lambda  x}(e^{-y}-1+y)\frac{{\rm d} y}{y^2} \right )\mu ({\rm d}x).
\end{split}
\]
Since the function
\[
g_\lambda(x)=\int_0^{\lambda  x}(e^{-y}-1+y)\frac{{\rm d} y}{y^2},
\]
is equivalent to $\lambda x/2$ as $x\to 0$ and equivalent to  $\ln x$ as $x\to \infty$, we deduce that 
\[
\int_{0}^\infty\Phi(\lambda e^{-y}){\rm d} y<\infty \qquad  \textrm{if and only if } \qquad \int^\infty x\ln x \mu({\rm d}x )<\infty.
\]
In other words, 
\[
\int_{0}^\infty\Phi(\lambda e^{-K^{(0)}_s}){\rm d} s <\infty \quad \textrm{a.s.,} \qquad \textrm{if and only if}\qquad \qquad \int^\infty x\ln x \mu({\rm d}x )<\infty.
\]
If the integral condition from above is satisfied, then
\[
\liminf_{t\to\infty} v_t(0, \lambda, K^{(0)})\ge \lambda \exp\left\{-\int_{0}^\infty\Phi(\lambda e^{-K^{(0)}_s}){\rm d} s \right\}>0, 
\]
implying 
 \[
 \mathbb{E}_z\Big[e^{-\lambda W}\Big| K^{(0)}\Big]\le \exp\left\{-z\  \lambda \exp\left\{-\int_{0}^\infty\Phi(\lambda e^{-K^{(0)}_s}){\rm d} s \right\}\right\}<1,
 \]
 and in particular $\mathbb{P}_z\Big(\underset{t\rightarrow\infty}{\liminf}Z_t>0\Big|K^{(0)}\Big)>0 $ a.s. Next, we use Lemma 20 in \cite{Bapa} and the branching property of $Z$, to deduce
$$\{W=0\}=\Big\{\underset{t\rightarrow\infty}{\lim}Z_t=0\Big\}.$$
Now assume  $\gamma>0$. From inequality (\ref{desgamma}) and the Dominate Convergence Theorem, we deduce
\[
\mathbb{P}_z\left(\lim_{t\to \infty}Z_t=0\Big| K^{(0)}\right)\ge \exp\left\{-z\left(\gamma^2\int_0^\infty e^{-K^{(0)}_s}{\rm d}s\right)^{-1}\right\} \quad \textrm{a.s.}
\]
On the other hand, since   $\mathbf{m}>0$, we deduce from Theorem 1 in Bertoin and Yor \cite{beryor} (or Proposition 3.3  in Salminen and Yor \cite{SY})
\begin{equation}\label{BerYor2}
\int_0^\infty e^{-K^{(0)}_s}{\rm d}s<\infty, \qquad \textrm{a.s.},
\end{equation}
implying that $\mathbb{P}_z\big(\lim_{t\to \infty}Z_t=0\big| K^{(0)}\big)>0$, a.s. Finally, according to Dufresne \cite{Du}, 
\begin{equation}\label{Dufresne}
\int_0^\infty e^{-K^{(0)}_s}{\rm d}s\quad \textrm{ has the same law as }\quad \Big(2\Gamma_{\frac{2\mathbf{m}}{\sigma^2}}\Big)^{-1},
\end{equation}
where $\Gamma_v$ is Gamma r.v. with parameter $v$. After straightforward computations, we deduce
\[
 \mathbb{P}_z\left(\lim_{t\to \infty}Z_t=0\right)\ge \left(1+\frac{z\sigma^2}{\gamma^2}\right)^{-\frac{2\mathbf{m}}{\sigma^{2}}}.
\]
The proof of  Proposition 2 is now complete. 
\end{proof}
\begin{rem} When $\mathbf{m}>0$, we also  have an upper bound for the probability of extinction under the assumption 
\[
 \gamma>0\qquad  \textrm{and}\qquad \kappa:=\int_1^\infty x^2\mu({\rm d} x)<\infty.
 \] 
 More precisely, under the above assumption we have, for $z>0$
\[
\left(1+\frac{z\sigma^2}{2\gamma^2}\right)^{-\frac{2\mathbf{m}}{\sigma^{2}}}\leq \mathbb{P}_z\left(\lim_{t\to \infty}Z_t=0\right)\leq \left(1+\frac{1}{2}\frac{z\sigma^2}{\gamma^2+\kappa}\right)^{-\frac{2\mathbf{m}}{\sigma^{2}}}.
\]
The upper bound follows from the a.s. inequality
\[
\frac{\partial}{\partial s}{v}_t(s,\lambda, K^{(0)})\le (\gamma^2+\kappa) {v}_t(s,\lambda,K^{(0)})^2 e^{-K^{(0)}_s}.
\]
\end{rem}

It is important to note that when the branching mechanism is of the form $\psi(u)=-\alpha u+c_\beta u^{\beta+1}$ for $\beta\in (0,1]$, one can deduce directly from (\ref{bplaplace2}) and (\ref{bplaplace3}) by taking $\lambda$ and $t$ to $\infty$, and (\ref{BerYor1}) that
\[
\lim_{t\to\infty}{Z_t}=0,\quad \textrm{a.s.,}\qquad \textrm{for }\quad \mathbf{m}\le 0. 
\]
Similarly,  using (\ref{BerYor2}), in the case when $\mathbf{m}>0$ we have
\begin{align*}
\Proconx{z}{\lim_{t\to\infty}Z_t=0}{K}&=\Expo{-z\left(\beta c_\beta\int_0^\infty e^{-\beta(K_u+\alpha u)} \ud u\right)^{-1/\beta}},\qquad \textrm{a.s.,}
\end{align*}
and in particular
\[
\mathbb{P}\Big(W=0\Big)=\mathbb{P}_z\Big(\lim_{t\to\infty}Z_t=0\Big)=\mathbb{E}_z\left[\Expo{-z\left(\beta c_\beta\int_0^\infty e^{-\beta(K_u+\alpha u)} \ud u\right)^{-1/\beta}}\right].
\]
The latter probability can be computed explicitly using (\ref{Dufresne}).

 We finish this section with a remark on the Neveu case. If we take limits as $\lambda\uparrow \infty$, in (\ref{bplaplacekneveu}),  we  obtain that the Neveu CBBRE survives conditionally on the environment, in other words
\begin{align*}
\Proconx{z}{Z_t>0}{K}&=1,
\end{align*}
for all $t\in (0,\infty)$ and $z\in(0,\infty)$. Moreover since the process has c\`adl\`ag paths, we deduce the Neveu CBBRE survives a.s., i.e.
\[
\p_z\Big(Z_t>0, \textrm{ for all } t\ge 0\Big)=1, \qquad \mbox{ for all } z>0.
\]

On the one hand, using integration by parts we obtain
$$\int_0^t e^{-s} K_s\ud s= \sigma \int_0^t e^{-s}\ud B^{(e)}_s-(1-e^{-t})\frac{\sigma^2}{2}-e^{-t}K_t.$$
Since 
$$\left\langle \sigma\int_0^{\cdot} e^{-s}\ud B^{(e)}_s\right\rangle_t=\frac{\sigma^2}{2}(1-e^{-2t})<\infty \qquad\mbox{ and } \qquad \Exp{\left\langle \sigma\int_0^{\cdot} e^{-s}\ud B^{(e)}_s\right\rangle_{\infty}}=\frac{\sigma^2}{2},$$
we have that
$$\underset{t\rightarrow \infty}{\lim}\int_0^t e^{-s} K_s\ud s=\sigma\int_0^{\infty} e^{-s}\ud B^{(e)}_s-\frac{\sigma^2}{2},$$
exists and its law is  Gaussian  with mean $-\frac{\sigma^2}{2}$ and variance $\frac{\sigma^2}{2}$. Hence, if we take  limits as $t\uparrow \infty$ in (\ref{bplaplacekneveu}), we observe 
\[
\e_z\Big[\exp\Big\{-\lambda  \lim_{t\to\infty} Z_t e^{-K_t}\Big\}\Big | K\Big]=\Expo{-z\Expo{\int_0^\infty e^{-s}K_s \ud s}}, \qquad \mbox{ for all } z>0.
\]
Since the right-hand side of the above identity does not depend on $\lambda$, this implies that 
\[
\p_z\Big(\lim_{t\to\infty} Z_t e^{-K_t}=0\Big | K\Big)=\Expo{-z\Expo{\int_0^\infty e^{-s}K_s \ud s}}, \qquad \mbox{ for all } z>0,
\]
and taking expectations in the above identity, we deduce
\[
\p_z\Big(\lim_{t\to\infty} Z_t e^{-K_t}=0\Big)=\e\left[\Expo{-z\Expo{\int_0^\infty e^{-s}K_s \ud s}}\right], \qquad \mbox{ for all } z>0.
\]
In conclusion, the Neveu process is conservative and survives a.s., but  the extinction probability is given by the Laplace transform of a log-normal distribution.

Finally, if we multiply (\ref{bplaplacekneveu}) by $e^{K_t}$, differentiate with respect to  $\lambda $ and then we take expectations on both sides,  we deduce
\[
\begin{split}
\e_{z}&\Big[Z_t \exp\Big\{-\lambda Z_t e^{-K_t}\Big\}\Big]\\
&\hspace{2cm}=ze^{-t}\lambda^{e^{-t}-1}\e\left[\Expo{K_t+\int_0^t e^{-s}K_s \ud s-z\lambda^{e^{-t}}\Expo{\int_0^t e^{-s}K_s \ud s}}\right].
\end{split}
\]
Taking limits as $\lambda$ goes to $0$, it is clear
$$\Expx{z}{Z_t}=\infty, \qquad \textrm{for }\quad t>0,\ z>0.$$

\section{Stable case}

The stable case is  perhaps one of the most interesting examples  of CB-processes. One of the advantages of this class of  CB-processes is that we can perform explicit computations of many functionals, see for instance \cite{Berfont, kypa, Lambert}, and that they appear in many other areas of probability such as coalescent theory, fragmentation theory,  L\'evy trees and self-similar Markov process to name but a few.  As we will see  below, we can also perform a lot of explicit computations when the stable CB-process is affected by a Brownian random environment.

 In the sequel, we shall assume that the branching mechanism satisfies $\psi(u)=-\alpha u +c_\beta u^{\beta+1},$ for  $u \geq 0,$ and $\beta\in(-1,0)\cup(0,1).$ Recall that $\alpha\in \mathbb{R}$, and 
\[
\left\{ \begin{array}{ll}
 c_\beta<0 &\textrm{ if $\beta \in(-1,0),$}\\
c_\beta>0 & \textrm{ if $\beta \in (0,1).$}
 \end{array} \right .
\]
B\"oinghoff and Hutzenthaler  \cite{CbMh} studied the particular case when  $\beta=1$, also known as the Feller diffusion case. The authors in \cite{CbMh} gave a precise asymptotic behaviour for the survival probability and also studied the so-called $Q$-process. In this section, we prove similar results for the case when $\beta\in(0,1)$ and we obtain new results on the asymptotic behaviour of non-explosion for the case when $\beta\in(-1, 0)$. 

Recall from identity (\ref{bplaplace3}) that the stable CBBRE $Z=(Z_t, t\ge 0)$, satisfies
\begin{align*}
\e_z\Big[\exp\Big\{-\lambda Z_t e^{-(K_t+\alpha t)}\Big\}\Big| K\Big]=\Expo{-z\left(\lambda^{-\beta}+\beta c_\beta\int_0^t e^{-\beta(K_u+\alpha u)} \ud u\right)^{-1/\beta}}.
\end{align*}
If we take limits as $\lambda$ goes to $\infty$, in the above identity  we  obtain for all $z, t> 0$,
\begin{align}\label{eqabs}
\Proconx{z}{Z_t>0}{K}&=1-\Expo{-z\left(\beta c_\beta\int_0^t e^{-\beta(K_u+\alpha u)} \ud u\right)^{-1/\beta}}\mathbf{1}_{\{\beta>0\}},\qquad \textrm{a.s.,}
\end{align}
where the same holds true for the Feller case by taking $\beta=1$ and $c_\beta=\gamma^2$. 

On the other hand, if we take limits as $\lambda$ goes to $0$, we deduce that $t,z>0$
\begin{align*}
\mathbb{P}_z\Big( Z_t < \infty\Big|K\Big)=\Expo{-z\left(\beta c_\beta\int_0^t e^{-\beta(K_u+\alpha u)} \ud u\right)^{-1/\beta}}\mathbf{1}_{\{\beta<0\}}+\mathbf{1}_{\{\beta>0\}}\qquad \textrm{a.s.}
\end{align*}
It  is then  clear that if $\beta\in (-1,0)$, then the survival probability equals 1, for all $t\geq 0$. If $\beta\in (0, 1]$ then the process is conservative.

From the above identities, a natural question arises: \textit{can we determine the  asymptotic behaviour, when $t$ goes to $\infty$, of $\p_z(Z_t>0)$ for  $\beta\in (0, 1]$ and  $\p_z(Z_t<\infty)$ for  $\beta\in (-1, 0)$?}  We observe below that the answer of this question depends on a fine study of  the asymptotic behaviour of 
\[
\e\left[\Expo{-z\left(\beta c_\beta\int_0^t e^{-\beta(K_u+\alpha u)} \ud u\right)^{-1/\beta}}\right].
\]
For this purpose, let us recall some interesting facts of the exponential functional of a Brownian motion with drift.
\subsection{Exponential functional of a Brownian motion with drift}
 In what follows, the following functional will be of particular interest. Let $I_{t}^{(\eta)}$ be the exponential functional of a  Brownian motion with drift $\eta\in \R$, in other words
\begin{align*}
I_{t}^{(\eta)}:=\int_{0}^{t} \exp\Big\{2(\eta s + B_{s})\Big\}\ud s, \qquad t\in [0,\infty).
\end{align*}
The law of such random variable have been deeply studied by many authors. Up to our knowledge this is the unique example for which there exist an explicit formula for the joint distribution of $(I_{t}^{(\eta)},B_t+\eta t)$, see for instance  Proposition 2 in Matsumoto and Yor \cite{MaYor}. In particular,  for all $t,\ u \in (0,\infty)$ and $x\in \R$, we have
\begin{align*} 
\Procon{I_{t}^{(\eta)}\in \ud u}{B_t+\eta t=x}=\frac{\sqrt{2\pi t}}{u}\Expo{\frac{x^{2}}{2t}}\Expo{-\frac{1}{2u}\Big(1+e^{2x}\Big)}\theta_{e^{x}/u}(t)\ud u,
\end{align*}
 where
\begin{align*}
\theta_{r}(t)=\frac{r}{\sqrt{2\pi^3 t}}e^{\frac{\pi^2}{2t}}\int_{0}^{\infty}e^{-\frac{y^{2}}{2t}-r \cosh(y)}\sinh (y)\sin\left(\frac{\pi y}{t}\right)\ud y,\qquad r>0.
\end{align*}

The following lemma generalizes Lemma 4.2 in \cite{CbMh} from $p=1$ to $p\geq 0$.
\begin{lemma} \label{bpmomento} Let  $\eta \in \mathbb{R}$ and $p\ge 0$. Then  for every $t >0$, we have
\begin{align*}
 i)\hspace{2cm} \mathbb{E} \left[  \left( I_{t}^{(\eta)} \right)^{-p} \right] & =   e^{(2p^2-2p\eta) t}   \mathbb{E} \left[  \left( I_{t}^{(-(\eta-2p))}\right)^{-p} \right],\\
 ii)\hspace{1.6cm} \mathbb{E} \left[  \left( I_{t}^{(\eta)}\right)^{-2p} \right] & \leq  e^{(2p^2-2p\eta) t} \mathbb{E} \left[  \left( I_{t/2}^{(-(\eta-2p))}\right)^{-p} \right]\mathbb{E} \left[  \left( I_{t/2}^{((\eta-2p))}\right)^{-p} \right].
\end{align*}
\end{lemma}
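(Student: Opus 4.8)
The plan is to obtain both relations from the time-reversal property of Brownian motion on a fixed interval combined with the Cameron--Martin change of measure, using throughout the classical fact that, for every $t>0$ and $\eta\in\mathbb{R}$, the functional $I_t^{(\eta)}$ has negative moments of all orders, so that every expectation below is finite. For part (i) I would first reverse time in the integral defining $I_t^{(\eta)}$: substituting $s\mapsto t-s$ and setting $W_s:=B_{t-s}-B_t$, which is a Brownian motion on $[0,t]$ satisfying $W_t=-B_t$, one finds
\[
I_t^{(\eta)}=e^{2\eta t}e^{-2W_t}\int_0^t e^{2(-\eta s+W_s)}\,\ud s,
\]
so that $I_t^{(\eta)}$ has the same law as $e^{2\eta t}e^{-2B_t}I_t^{(-\eta)}$, where $I_t^{(-\eta)}$ is built from the same Brownian motion $B$. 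Taking $-p$-th powers and expectations gives $\mathbb{E}[(I_t^{(\eta)})^{-p}]=e^{-2p\eta t}\,\mathbb{E}[e^{2pB_t}(I_t^{(-\eta)})^{-p}]$. The weight $e^{2pB_t}$ is then absorbed through the exponential martingale $\mathcal{E}_s=\exp\{2pB_s-2p^2s\}$: under the probability $\mathbb{Q}$ defined by $\ud\mathbb{Q}/\ud\mathbb{P}=\mathcal{E}_t$ on $\mathcal{F}_t$, the process $(B_s-2ps)_{s\le t}$ is a Brownian motion, so that $I_t^{(-\eta)}$ has under $\mathbb{Q}$ the law of $I_t^{(2p-\eta)}$. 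Hence $\mathbb{E}[e^{2pB_t}(I_t^{(-\eta)})^{-p}]=e^{2p^2t}\,\mathbb{E}[(I_t^{(2p-\eta)})^{-p}]$, and since $2p-\eta=-(\eta-2p)$ this is exactly (i).

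For part (ii) I would split the integral at $t/2$, writing $I_t^{(\eta)}=A_t+R_t$ with $A_t=\int_0^{t/2}e^{2(\eta s+B_s)}\,\ud s$ and $R_t=\int_{t/2}^t e^{2(\eta s+B_s)}\,\ud s$. Since $A_t+R_t\ge 2\sqrt{A_tR_t}$ and $2^{-2p}\le 1$ for $p\ge 0$, we have $(I_t^{(\eta)})^{-2p}\le A_t^{-p}R_t^{-p}$. By the Markov property at $t/2$, writing $B_{t/2+r}=B_{t/2}+B'_r$ with $B'$ a Brownian motion independent of $\mathcal{F}_{t/2}$, one obtains $R_t=e^{\eta t}e^{2B_{t/2}}I'$, where $I'$ is an independent copy of $I_{t/2}^{(\eta)}$, while $A_t=I_{t/2}^{(\eta)}$. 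Using the independence of $I'$ from $\mathcal{F}_{t/2}$ and then the change of measure with $\mathcal{E}_s=\exp\{-2pB_s-2p^2s\}$ (which, exactly as in part (i), turns $I_{t/2}^{(\eta)}$ into $I_{t/2}^{(\eta-2p)}$), one gets
\[
\mathbb{E}\big[(I_t^{(\eta)})^{-2p}\big]\le e^{(p^2-p\eta)t}\,\mathbb{E}\big[(I_{t/2}^{(\eta-2p)})^{-p}\big]\,\mathbb{E}\big[(I_{t/2}^{(\eta)})^{-p}\big].
\]
Finally, applying part (i) at time $t/2$ to the last factor, $\mathbb{E}[(I_{t/2}^{(\eta)})^{-p}]=e^{(p^2-p\eta)t}\,\mathbb{E}[(I_{t/2}^{(-(\eta-2p))})^{-p}]$, and multiplying the prefactors yields the bound in (ii).

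The routine content here is the sequence of substitutions and the bookkeeping of the exponential prefactors. The two places where care is needed are the joint time-reversal identity for $(I_t^{(\eta)},B_t)$ in part (i) — in particular the endpoint relation $W_t=-B_t$, which is what produces the weight $e^{-2B_t}$ — and the verification that all the negative moments appearing after the splitting and the two Girsanov transforms are finite, so that the factorization by independence and the changes of measure are justified; this last point follows from the integrability of $(I_s^{(\eta)})^{-p}$ for $s>0$. I expect the joint time-reversal identity to be the main point to pin down, though it is entirely classical.
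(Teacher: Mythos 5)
Your proof is correct and follows essentially the same route as the paper: part (i) via time reversal of the Brownian path on $[0,t]$ followed by an exponential (Esscher/Girsanov) change of measure with parameter $\pm 2p$, and part (ii) via splitting the integral at $t/2$, the bound $(A+R)^{-2p}\le A^{-p}R^{-p}$, independence of the increment after $t/2$, a Girsanov tilt to absorb $e^{-2pB_{t/2}}$, and an application of part (i) at time $t/2$. The only cosmetic difference is the order in which the two factors are transformed in part (ii), and your explicit attention to the joint identity $W_t=-B_t$ in the time-reversal step is exactly the point the paper's argument also relies on.
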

\begin{proof} Using the time reversal property for Brownian motion, we observe that the process $(\eta t+B_t-\eta(t-s )-B_{t-s}, 0\le s\le t )$  has the same law as $(\eta s+B_s, 0\le s\le t)$. Then, we deduce  that 
\[
\int_0^t e^{2(\eta s+B_s)}{\rm d s} \quad\textrm{has  the same law as } \quad e^{2(\eta t+B_t)}\int_0^t e^{-2(\eta s+B_s)}{\rm d s}.
\]
We now introduce the exponential change of measure known as the Esscher transform or Girsanov's formula
\[
\frac{{\rm d} \mathbb{P}^{(\lambda)}}{{\rm d}\mathbb{P}}\bigg|_{\mathcal{F}_t}= e^{\lambda B_t -\frac{\lambda^2}{2}t}, \qquad \textrm{for }\quad \lambda \in \mathbb{R},
\]
where $(\mathcal{F}_t)_{t\ge 0}$ is the natural filtration generated by the Brownian motion $B$ which is naturally completed. Observe that under $\mathbb{P}^{(\lambda)}$, the process $B$ is a Brownian motion with drift $\lambda$. Hence, taking $\lambda=-2p$, we deduce
\[
\begin{split}
\mathbb{E}\left[\left(\int_0^t e^{2(\eta s+B_s)}{\rm d s}\right)^{-p}\right]&=\mathbb{E}\left[e^{-2p(\eta t+B_t)}\left(\int_0^t e^{-2(\eta s+B_s)}{\rm d s}\right)^{-p}\right]\\
&=e^{-2p\eta t} e^{2p^2 t}\mathbb{E}^{(-2p)}\left[\left(\int_0^t e^{-2(\eta s+B_s)}{\rm d s}\right)^{-p}\right]\\
&=e^{-2p\eta t} e^{2p^2 t}\mathbb{E}\left[\left(\int_0^t e^{-2((\eta-2p) s+B_s)}{\rm d s}\right)^{-p}\right],
\end{split}
\]
which implies the first identity, thanks to the symmetry property of Brownian motion.

In order to get the second identity, we observe 
\[
\int_0^t e^{2(\eta s+B_s)}{\rm d s}=\int_0^{t/2} e^{2(\eta s+B_s)}{\rm d s} +e^{\eta t +2B_{t/2}}\int_0^{t/2} e^{2(\eta s+\tilde{B}_s)}{\rm d s},
\]
where $\tilde{B}_{s}={B}_{s+{t/2}}-B_{t/2}$,  $s\ge 0$, is a Brownian motion which is independent of  $(B_u, 0\le u\le t/2)$. Therefore, using part (i), we deduce
\[
\begin{split}
\mathbb{E}\left[\left(\int_0^t e^{2(\eta s+B_s)}{\rm d s}\right)^{-2p}\right]&\\
&\hspace{-2cm}\le \mathbb{E}\left[\left(e^{\eta t +2B_{t/2}}\int_0^{t/2} e^{2(\eta s+B_s)}{\rm d s}\right)^{-p}\right] \mathbb{E}\left[\left(\int_0^{t/2} e^{2(\eta s+B_s)}{\rm d s}\right)^{-p}\right]\\
&\hspace{-2cm}\le e^{(p^2-\eta p)t} \mathbb{E}\left[\left(e^{\eta t +2B_{t/2}}\int_0^{t/2} e^{2(\eta s+B_s)}{\rm d s}\right)^{-p}\right] \mathbb{E}\left[\left( I_{t/2}^{(-(\eta-2p))}\right)^{-p} \right].\\
\end{split}
\]
On the other hand from the Esscher transform with $\lambda=-2p$, we get
\[
\begin{split}
\mathbb{E}\left[\left(e^{\eta t +2 B_{t/2}}\int_0^{t/2} e^{2(\eta s+B_s)}{\rm d s}\right)^{-p}\right]&=e^{-p\eta t} e^{p^2 t}\mathbb{E}^{(-2p)}\left[\left(\int_0^{t/2} e^{2(\eta s+B_s)}{\rm d s}\right)^{-p}\right]\\
&=e^{-p\eta t} e^{p^2 t}\mathbb{E}\left[\left(\int_0^{t/2} e^{2((\eta-2p) s+B_s)}{\rm d s}\right)^{-p}\right].
\end{split}
\]
Putting all the pieces together, we deduce
\[
\begin{split}
\mathbb{E}\left[\left(\int_0^t e^{2(\eta s+B_s)}{\rm d s}\right)^{-2p}\right]&\le e^{(2p^2-\eta 2p)t} \mathbb{E}\left[\left( I_{t/2}^{((\eta-2p))}\right)^{-p} \right]\mathbb{E}\left[\left( I_{t/2}^{(-(\eta-2p))}\right)^{-p} \right].\\
\end{split}
\]
The proof of the lemma is now complete.
\end{proof}

We are also interested in 
\[
I_{\infty}^{(\eta)}:=\int_{0}^{\infty} \exp\Big\{2(\eta s + B_{s})\Big\}\ud s, 
\]
which is  finite a.s. whenever $\eta<0$. We recall that according to Dufresne \cite{Du}, 
\begin{equation}\label{dufresne}
I^{(\eta)}_\infty\quad \textrm{ has the same law as }\quad \Big(2\Gamma_{-\eta}\Big)^{-1},
\end{equation}
where $\Gamma_v$ is Gamma r.v. with parameter $v$.
\subsection{Explosion probability}
Throughout this section, we assume that  $\beta\in(-1, 0)$. As we will see in the main result of this section, the asymptotic behaviour of the probability of explosion depends on the value of 
\[
\mathbf{m}=\alpha-\frac{\sigma^2}{2}.
\]
We recall that when $\sigma =0$,  i.e there is no random environment, the stable CB-processes  explodes with positive probability. Moreover, when $\alpha=0$, we can compute explicitly the asymptotic behaviour of the probability of explosion. 

When a Brownian random environment affects the stable CB-process, the process behaves completely different. In fact, it also  explodes with positive probability but  we have three different regimes of the asymptotic behaviour of the non-explosion probability that depends on the parameters of  the random environment. Up to our knowledge, this behaviour was never observed  or studied before. We call these regimes {\it subcritical-explosion}, {\it critical-explosion} or {\it supercritical-explosion} depending on whether this probability stays positive, converge to zero polynomially fast or converges to zero exponentially fast.

 Let
\begin{align*}
\eta:=-\frac{2}{\beta\sigma^2}\mathbf{m} \qquad\textrm{ and }\qquad \mathbf{k}=\left(\frac{\beta \sigma^2}{2c_\beta}\right)^{1/\beta},
\end{align*}
and define
$$ g(x):=\Expo{- \mathbf{k} x^{1/\beta}}, \qquad \textrm{for}\qquad x\ge 0.$$
From identity (\ref{eqexplo}) and the scaling property, we deduce  for $\beta\in (-1,0)$ and $\eta>-1$, 
\begin{align}\label{bpZyAex}
\Prox{z}{Z_{t}<\infty}=\Exp{g\left(\frac{z^{\beta}}{2I_{\beta^2\sigma^{2}t/4}^{(\eta)}}\right)}=\int_{0}^{\infty}g(z^{\beta}v)p_{t\sigma_{e}^{2}/4,\eta}(v)\ud v,
\end{align}
where $p_{\nu,\eta}$ denotes the density function of $1/2I_{\nu}^{(\eta)}$ which according to Matsumoto and Yor \cite{MaYor}, satisfies
\begin{align}\label{idMY}
p_{\nu,\eta}(x)=&\frac{e^{-\eta^{2}\nu/2}e^{\pi^{2}/2\nu}}{\sqrt{2}\pi^{2}\sqrt{\nu}}\Gamma\left(\frac{\eta+2}{2}\right)e^{-x}x^{-(\eta+1)/2}\int_{0}^{\infty}\int_{0}^{\infty}e^{\xi^{2}/2\nu}s^{(\eta-1)/2}e^{-xs}\\
&\hspace{7.5cm}\times \frac{\sinh(\xi)\cosh(\xi)\sin(\pi\xi/\nu)}{(s+\cosh(\xi)^{2})^{\frac{\eta+2}{2}}}\ud \xi \ud s. \nonumber
\end{align}
We also denote 
\[
\mathcal{L}_{\eta, \beta}(\theta)=\e\Big[ e^{-\theta \Gamma_{-\eta}^{1/\beta}}\Big], \qquad \textrm{for}\quad \theta\ge 0.
\]

\begin{theorem}\label{bplimitesnegativo}
Let $(Z_{t}, t\geq 0)$ be the stable CBBRE with index $\beta\in(-1,0)$ defined by the SDE (\ref{csbpbrest}) with $Z_0=z>0$. 
\begin{itemize}
\item[i)] Subcritical-explosion. If $\mathbf{m}<0$, then
\begin{equation*}
\underset{t\rightarrow\infty}{\lim}\Prox{z}{Z_t<\infty}=\mathcal{L}_{\eta, \beta}\left(z\mathbf{k}\right). \label{bpsubex}
\end{equation*}
\item[ii)]  Critical-explosion. If $\mathbf{m}=0$, then
\begin{equation*}
\underset{t\rightarrow\infty}{\lim}\sqrt{t}\ \Prox{z}{Z_t<\infty}=- \frac{\sqrt{2}}{\sqrt{\pi}\beta\sigma}\int_0^{\infty}e^{-z\mathbf{k}x^{1/\beta}-x}\frac{\ud x}{x}.  \label{bpcriticoex}
\end{equation*}
\item[iii)] Supercritical-explosion. If $\mathbf{m}>0$, then
\begin{equation*}
\underset{t\rightarrow\infty}{\lim}t^{\frac{3}{2}} e^{\frac{\mathbf{m}^2 t}{2\sigma^2}}\Prox{z}{Z_t<\infty}=- \frac{8}{\beta^3\sigma^3}\int_{0}^{\infty}g(z^{\beta}v)\phi_{\eta}(v)\ud v, \label{bpsuperex}
\end{equation*}
where
 $$\phi_{\eta}(v)=\int_{0}^{\infty}\int_{0}^{\infty}\frac{1}{\sqrt{2}\pi}\Gamma\left(\frac{\eta+2}{2}\right)e^{-v}v^{-\eta/2}u^{(\eta-1)/2}e^{-u}\frac{\sinh(\xi)\cosh(\xi)\xi}{(u+v\cosh(\xi)^{2})^{\frac{\eta+2}{2}}}\ud \xi \ud u. $$
\end{itemize}
\end{theorem}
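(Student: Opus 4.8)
The starting point is the almost sure identity \eqref{eqexplo} for $\mathbb{P}_z(Z_t<\infty\mid K)$. Taking expectations over the environment and using that $K_u+\alpha u=\sigma B^{(e)}_u+\mathbf{m}u$, a Brownian scaling — the time change $s=\beta^2\sigma^2 u/4$, under which $-\tfrac{\beta\sigma}{2}B^{(e)}$ becomes a standard Brownian motion — turns $\beta c_\beta\int_0^t e^{-\beta(K_u+\alpha u)}\ud u$ into a deterministic multiple of $I^{(\eta)}_{\nu_t}$, where $\nu_t=\beta^2\sigma^2 t/4$ and $\eta=-2\mathbf{m}/(\beta\sigma^2)$; this is exactly \eqref{bpZyAex}, so that
\[
\mathbb{P}_z(Z_t<\infty)=\mathbb{E}\!\left[g\!\left(\frac{z^\beta}{2I^{(\eta)}_{\nu_t}}\right)\right]=\int_0^\infty g(z^\beta v)\,p_{\nu_t,\eta}(v)\,\ud v .
\]
Since $\mathbf{m}<0$, $\mathbf{m}=0$, $\mathbf{m}>0$ correspond to $\eta<0$, $\eta=0$, $\eta>0$, and since $\eta^2\nu_t/2=\mathbf{m}^2t/(2\sigma^2)$, $\nu_t^{-1/2}=-2(\beta\sigma)^{-1}t^{-1/2}$ and $\nu_t^{-3/2}=-8(\beta^3\sigma^3)^{-1}t^{-3/2}$, each of the three claims reduces to a statement about the behaviour of $\int_0^\infty g(z^\beta v)p_{\nu,\eta}(v)\,\ud v$ as $\nu\to\infty$.

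Part (i) is the easy case: for $\eta<0$ one has $I^{(\eta)}_\infty<\infty$ a.s. and $I^{(\eta)}_{\nu_t}\uparrow I^{(\eta)}_\infty$, so since $g$ is continuous with $0\le g\le1$, dominated convergence gives $\mathbb{P}_z(Z_t<\infty)\to\mathbb{E}[g(z^\beta/(2I^{(\eta)}_\infty))]$; Dufresne's identity \eqref{dufresne}, $2I^{(\eta)}_\infty\stackrel{d}{=}\Gamma_{-\eta}^{-1}$, combined with $g(z^\beta w)=\exp\{-z\mathbf{k}w^{1/\beta}\}$, identifies this limit with $\mathbb{E}[e^{-z\mathbf{k}\Gamma_{-\eta}^{1/\beta}}]=\mathcal{L}_{\eta,\beta}(z\mathbf{k})$.

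For parts (ii) and (iii) the plan is to read off the precise $\nu\to\infty$ asymptotics of the Matsumoto--Yor density $p_{\nu,\eta}$ from \eqref{idMY} and then integrate against $g(z^\beta\,\cdot\,)$. In the inner double integral one uses $\sin(\pi\xi/\nu)\sim\pi\xi/\nu$, $e^{\pi^2/2\nu}\to1$ and the elementary bound $\sinh\xi\cosh\xi\le\cosh^2\xi$ on the $\xi$-factor. When $\eta>0$ this $\xi$-factor already decays exponentially, so after multiplying by $\nu$ one may pass to the limit and recover $\phi_\eta$, i.e. $\nu^{3/2}e^{\eta^2\nu/2}p_{\nu,\eta}(v)\longrightarrow\phi_\eta(v)$ for $v>0$. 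When $\eta=0$ the $\xi$-factor only tends to $1$, so the relevant $\xi$ are of order $\sqrt\nu$; the substitution $\xi=\sqrt\nu\,w$ keeps the Gaussian factor alive, the $\xi$-integral converges to $\pi\int_0^\infty w e^{-w^2/2}\ud w=\pi$, the ensuing $u$-integral is $\pi\int_0^\infty u^{-1/2}e^{-vu}\ud u=\pi^{3/2}v^{-1/2}$, and one gets $\sqrt\nu\,p_{\nu,0}(v)\longrightarrow\frac{1}{\sqrt{2\pi}}\,e^{-v}/v$. The same bounds ($|\sin\theta|\le|\theta|$, $\sinh\xi\cosh\xi\le\cosh^2\xi$, $e^{-\xi^2/2\nu}\le1$) yield the $\nu$-uniform dominations $\sqrt\nu\,p_{\nu,0}(v)\le C e^{-v}/v$ and $\nu^{3/2}e^{\eta^2\nu/2}p_{\nu,\eta}(v)\le C\phi_\eta(v)$, and because $g(z^\beta v)=\exp\{-z\mathbf{k}v^{1/\beta}\}$ is superexponentially small as $v\downarrow0$ (recall $1/\beta<0$) while the $e^{-v}$ factor controls $v\to\infty$, these dominating functions are integrable against $g(z^\beta\,\cdot\,)$. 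A final dominated convergence in the outer $v$-integral, together with the identities linking $\nu_t$ to $t$, then produces the constants in (ii) and (iii); note that the $\xi$-integral in $\phi_0$ would diverge, which is precisely why the critical regime carries the $\sqrt t$ (and not the $t^{3/2}$) normalization.

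The main obstacle is this asymptotic analysis of $p_{\nu,\eta}$ and the accompanying interchanges of limit and integral: one must control the $(\xi,u)$-double integral in \eqref{idMY} uniformly in $\nu$ — splitting the $\xi$-range at a fixed level when $\eta>0$, and additionally at a level of order $\sqrt\nu$ in the critical case $\eta=0$ — and then check that the resulting dominating functions stay integrable against $g(z^\beta\,\cdot\,)$ near $v=0$, where $p_{\nu,\eta}(v)$ blows up like $v^{-(\eta+1)/2}$ but $g(z^\beta v)$ vanishes faster than any power. The change of variables to $I^{(\eta)}$, the bookkeeping of constants, and part (i) are routine.
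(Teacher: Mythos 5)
Your proposal is correct, and its skeleton coincides with the paper's: both reduce the problem, via the identity \eqref{bpZyAex} and the time change $s=\beta^2\sigma^2u/4$, to the large-$\nu$ behaviour of $\int_0^\infty g(z^\beta v)p_{\nu,\eta}(v)\,\ud v$, and part (i) is in both cases Dufresne's identity \eqref{dufresne} plus dominated convergence. Where you diverge is in parts (ii) and (iii): the paper treats the density asymptotics as a black box, citing Lemmas 4.4 and 4.5 of B\"oinghoff and Hutzenthaler \cite{CbMh} applied to the function $g$ with the polynomial majorants $g(z^\beta x)\le x^{-1/\beta}/(z\mathbf{k})$ and $g(z^\beta x)\le x^{-n/\beta}/(n!(z\mathbf{k})^n)$, whereas you re-derive the asymptotics of $p_{\nu,\eta}$ directly from \eqref{idMY} (implicitly reading the factor $e^{\xi^2/2\nu}$ there as $e^{-\xi^2/2\nu}$, which is the correct form --- as written the inner integral would diverge). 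Your direct route is more self-contained and has one genuine advantage: the paper's use of Lemma 4.5 of \cite{CbMh} only covers $0<\mathbf{m}<n\sigma^2/2$ for the exponent $n$ appearing in the majorant, so the paper must run the argument for every $n$ and observe that the limit is the same; your dominations $\nu^{3/2}e^{\eta^2\nu/2}p_{\nu,\eta}(v)\le e^{\pi^2/2}\phi_\eta(v)$ and $\sqrt{\nu}\,p_{\nu,0}(v)\le Ce^{-v}/v$ do not depend on any polynomial growth rate of $g$, so all of $\mathbf{m}>0$ is handled at once, with integrability against $g(z^\beta\cdot)$ near $v=0$ secured by the superpolynomial decay of $e^{-z\mathbf{k}v^{1/\beta}}$ (since $1/\beta<0$). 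The price is that you must carry out the $(\xi,s)$-interchanges yourself; your splitting at scale $\sqrt{\nu}$ in the critical case and the resulting limits $\sqrt{\nu}\,p_{\nu,0}(v)\to e^{-v}/(v\sqrt{2\pi})$ and $\nu^{3/2}e^{\eta^2\nu/2}p_{\nu,\eta}(v)\to\phi_\eta(v)$ are consistent with the constants in the statement (e.g.\ $\nu_t^{-1/2}=-2(\beta\sigma\sqrt{t})^{-1}$ produces the prefactor $-\sqrt{2}/(\sqrt{\pi}\beta\sigma)$), so I see no gap.
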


\begin{proof} Our arguments follows from similar reasoning as in the proof of Theorem 1.1 in  B\"oinghoff and Hutzenthaler \cite{CbMh}. For this reason, following the same notation as in \cite{CbMh}, we just provide the fundamental ideas of the proof. 

The subcritical-explosion case (i) follows from the identity in law by Dufresne (\ref{dufresne}). More precisely, 
 from  (\ref{dufresne}), (\ref{bpZyAex}) and the Dominated Convergence Theorem, we deduce 
\begin{align*}
\underset{t\rightarrow\infty}{\lim}\Prox{z}{Z_t<\infty}=\Exp{\Expo{-z\mathbf{k}\Gamma_{-\eta}^{1/\beta}}}=\mathcal{L}_{\eta, \beta}\left(z\mathbf{k}\right).
\end{align*}
In order to prove the critical-explosion case (ii), we use Lemma 4.4 in  \cite{CbMh}. From identity (\ref{bpZyAex}) and   applying Lemma 4.4 in \cite{CbMh} to
\begin{equation}\label{boingex}
g(z^{\beta}x)=\Expo{-z\mathbf{k}x^{1/\beta}}\leq \frac{x^{-1/\beta}}{z\mathbf{k}}, \qquad\textrm{for} \quad x\geq 0,
\end{equation}
we get 
\begin{align*}
\underset{t\rightarrow\infty}{\lim}\sqrt{t}\ \Prox{z}{Z_t<\infty}&=-\frac{2}{\beta\sigma}\ \underset{t\rightarrow\infty}{\lim}\sqrt{\frac{t\sigma^2\beta^2}{4}}\ \Exp{g\left(\frac{z^{\beta}}{2I_{\beta^2\sigma^{2}t/4}^{(\eta)}}\right)}\\
&=-\frac{\sqrt{2}}{\sqrt{\pi}\beta\sigma}\int_0^{\infty}e^{-z\mathbf{k}x^{1/\beta}-x}\frac{\ud x}{x},
\end{align*}
which is finite since the inequality (\ref{boingex}) holds.

We now consider the supercritical-explosion case (iii).  Observe that for all $n\ge 0$,
\begin{equation*}
g(z^{\beta}x)=\Expo{-z\mathbf{k}x^{1/\beta}}\leq \frac{x^{-n/\beta}}{n! (z\mathbf{k})^n }, \qquad\textrm{for}\quad  x\geq 0.
\end{equation*}
Therefore
 using the above inequality for a fixed $n$, Lemma 4.5 in  \cite{CbMh} and identity  (\ref{bpZyAex}),  we obtain that for each  $0<\mathbf{m}$ that satisfies $\mathbf{m}<n\sigma^2/2$, the following limit holds 
\begin{align*}
\underset{t\rightarrow\infty}{\lim}t^{3/2}\ e^{\mathbf{m}^2 t/2\sigma^2}\Prox{z}{Z_{t}>0}&=\underset{t\rightarrow\infty}{\lim}t^{3/2}\ e^{\eta^2 \beta^2\sigma^{2}t/8}\Exp{g\left(\frac{z^{\beta}}{2I_{\beta^2\sigma^{2}t/4}^{(\eta)}}\right)}\\
&=-\frac{8}{\beta^3\sigma^3}\int_{0}^{\infty}e^{-z\mathbf{k}v^{1/\beta}}\phi_{\eta}(v)\ud v,
\end{align*}
where $\phi_\eta$ is defined as in the statement of the Theorem. Since this limit holds for any $n\ge 1$, we deduce that it must hold for $\mathbf{m}>0$. This completes the proof.

\end{proof} 

\subsection{Survival probability}
Throughout this section, we assume that  $\beta\in(0, 1)$. One of the aims of this section is to compute the asymptotic behaviour of the survival probability  and we will see that it depends on the value of $\mathbf{m}$. We find five different regimes as in the Feller case (see for instance Theorem 1.1 in \cite{CbMh}) and CB-processes with catastrophes (see for instance Proposition 5 in \cite{Bapa}). Recall that in the classical theory of branching processes, the survival probability stays positive, converges to zero polynomially fast or converges to zero exponentially fast, depending of whether  the process is \textit{supercritical} ($\mathbf{m}>0$), \textit{critical} ($\mathbf{m}=0$) or \textit{subcritical} ($\mathbf{m}<0$), respectively. In the stable CBBRE there is another phase transition in the subcritical regime. This phase transition occurs when $\mathbf{m}=-\sigma^2$. We say that the stable CBBRE is \textit{weakly subcritical} if $-\sigma^2<\mathbf{m}<0$, \textit{intermediately subcritical} if $\mathbf{m}=-\sigma^2$ and \textit{strongly subcritical} if $\mathbf{m}<-\sigma^2$.

 Recall that  
\begin{align*}
\eta=-\frac{2}{\beta\sigma^2}\mathbf{m}\qquad\textrm{and}\qquad \mathbf{k}=\left(\frac{\beta\sigma^2}{2c_\beta}\right)^{1/\beta},
\end{align*}
and define
$$ f(x):=1-\Expo{-\mathbf{k} x^{1/\beta}}, \qquad \textrm{for}\qquad x\ge 0.$$
From identity (\ref{eqabs}) and the scaling property, we deduce  for $\beta>0$ and $\eta>-1$, 
\begin{align}\label{bpZyA}
\Prox{z}{Z_{t}>0}=\Exp{f\left(\frac{z^{\beta}}{2I_{\beta^2\sigma^{2}t/4}^{(\eta)}}\right)}=\int_{0}^{\infty}f(z^{\beta}v)p_{\beta^2\sigma^{2}t/4,\eta}(v)\ud v,
\end{align}
where $p_{\nu,\eta}$ denotes the density function of $1/2I_{\nu}^{(\eta)}$ and is given in (\ref{idMY}).

\begin{theorem}\label{bplimites}
Let $(Z_{t}, t\geq 0)$ be the stable CBBRE with index $\beta\in(0,1)$ defined by the SDE (\ref{csbpbrest}) with $Z_0=z>0$. \begin{itemize}
\item[i)] Supercritical. If $\mathbf{m}>0$, then
\begin{equation}
\underset{t\rightarrow\infty}{\lim}\Prox{z}{Z_t>0}=1-\underset{n=0}{\overset{\infty}{\sum}}\frac{(-z\mathbf{k})^{n}}{n!} \frac{\Gamma(\frac{n}{\beta}-\eta)}{\Gamma(-\eta)}. \label{bpsuper}
\end{equation}
\item[ii)]  Critical. If $\mathbf{m}=0$, then
\begin{equation}
\underset{t\rightarrow\infty}{\lim}\sqrt{t}\ \Prox{z}{Z_t>0}=- \frac{\sqrt{2}}{\sqrt{\pi}\beta\sigma}\underset{n=1}{\overset{\infty}{\sum}}\frac{(-z\mathbf{k})^{n}}{n!} \Gamma\left(\frac{n}{\beta}\right).  \label{bpcritico}
\end{equation}
\item[iii)] Weakly subcritical. If $\mathbf{m}\in (-\sigma^2, 0)$, then
\begin{equation}
\underset{t\rightarrow\infty}{\lim}t^{\frac{3}{2}} e^{\frac{\mathbf{m}^2 t}{2\sigma^2}}\Prox{z}{Z_t>0}= \frac{8}{\beta^3\sigma^3}\int_{0}^{\infty}f(z^\beta v)\phi_{\eta}(v)\ud v, \label{bpsubdebil}
\end{equation}
where
 $$\phi_{\eta}(v)=\int_{0}^{\infty}\int_{0}^{\infty}\frac{1}{\sqrt{2}\pi}\Gamma\left(\frac{\eta+2}{2}\right)e^{-v}v^{-\eta/2}u^{(\eta-1)/2}e^{-u}\frac{\sinh(\xi)\cosh(\xi)\xi}{(u+v\cosh(\xi)^{2})^{\frac{\eta+2}{2}}}\ud \xi \ud u. $$
\item[iv)] Intermediately subcritical. If $\mathbf{m}=-\sigma^2$, then
\begin{equation}
\underset{t\rightarrow\infty}{\lim}\sqrt{t} e^{\sigma^2 t/2}\Prox{z}{Z_t>0}=z \frac{\sqrt{2}}{\sqrt{\pi}\beta\sigma}\mathbf{k}\Gamma\left(\frac{1}{\beta}\right). \label{bpsubinter}
\end{equation}
\item[v)] Strongly subcritical. If $\mathbf{m}<-\sigma^2$, then
\begin{equation}
\underset{t\rightarrow\infty}{\lim} e^{-\frac{1}{2} (2\mathbf{m}+\sigma^2)t}\Prox{z}{Z_t>0}=z \mathbf{k}\frac{\Gamma(\eta-1/\beta)}{\Gamma(\eta-2/\beta)}. \label{bpsubfuerte}
\end{equation}
\end{itemize}
\end{theorem}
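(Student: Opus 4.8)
The plan is to base everything on the representation (\ref{bpZyA}), $\Prox{z}{Z_t>0}=\Exp{f(z^\beta/(2I_\nu^{(\eta)}))}=\int_0^\infty f(z^\beta v)\,p_{\nu,\eta}(v)\,\ud v$ with $\nu=\beta^2\sigma^2t/4$, and to analyse, as $\nu\to\infty$, the negative moments of the exponential functional $I_\nu^{(\eta)}$. Since $\eta=-2\mathbf{m}/(\beta\sigma^2)$ and $\beta>0$, the sign of $\eta$ records whether $\mathbf{m}$ is positive, zero or negative, while $\eta=2/\beta$ is exactly $\mathbf{m}=-\sigma^2$; thus the five regimes are already visible here. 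The recurring tools are the elementary bounds $f(x)\le\min\{1,\mathbf{k}x^{1/\beta}\}$ and $0\le\mathbf{k}x^{1/\beta}-f(x)\le\tfrac12\mathbf{k}^2x^{2/\beta}$, Dufresne's identity (\ref{dufresne}), Lemma \ref{bpmomento}, the Matsumoto--Yor density (\ref{idMY}), and Lemmas 4.4--4.5 of \cite{CbMh}; the argument parallels the proof of Theorem \ref{bplimitesnegativo}.

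For the supercritical case (i) one has $\eta<0$ and $I_\nu^{(\eta)}\uparrow I_\infty^{(\eta)}<\infty$ a.s., so boundedness of $f$, dominated convergence and (\ref{dufresne}) give $\lim_{t\to\infty}\Prox{z}{Z_t>0}=\Exp{1-\exp(-z\mathbf{k}\Gamma_{-\eta}^{1/\beta})}=1-\mathcal{L}_{\eta,\beta}(z\mathbf{k})$, and expanding $1-e^{-x}$ with $\Exp{\Gamma_{-\eta}^{n/\beta}}=\Gamma(n/\beta-\eta)/\Gamma(-\eta)$ yields (\ref{bpsuper}). For the critical case (ii) ($\eta=0$) and the weakly subcritical case (iii) ($0<\eta<2/\beta$) I would apply Lemma 4.4, resp.\ Lemma 4.5, of \cite{CbMh} to $x\mapsto f(z^\beta x)$: the bound $f(z^\beta x)\le z\mathbf{k}x^{1/\beta}$ feeds their hypotheses, and in case (iii) the additional integrability demanded by Lemma 4.5 -- of $v\mapsto f(z^\beta v)$ against the limit kernel $\phi_\eta$ coming from (\ref{idMY}), which behaves like $v^{-\eta/2-1}$ near the origin up to a logarithm -- holds precisely when $1/\beta>\eta/2$, i.e.\ $\mathbf{m}>-\sigma^2$. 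This gives $\sqrt{t}\,\Prox{z}{Z_t>0}\to\frac{\sqrt2}{\sqrt\pi\beta\sigma}\int_0^\infty(1-e^{-z\mathbf{k}x^{1/\beta}})e^{-x}\,\ud x/x$, which is (\ref{bpcritico}) after termwise integration, and $t^{3/2}e^{\mathbf{m}^2t/(2\sigma^2)}\Prox{z}{Z_t>0}\to\frac{8}{\beta^3\sigma^3}\int_0^\infty f(z^\beta v)\phi_\eta(v)\,\ud v$, which is (\ref{bpsubdebil}), once $\phi_\eta$ is identified as $\lim\nu^{3/2}e^{\eta^2\nu/2}p_{\nu,\eta}$ and one uses $\nu^{-3/2}=8(\beta^2\sigma^2)^{-3/2}t^{-3/2}$, $\eta^2\nu/2=\mathbf{m}^2t/(2\sigma^2)$.

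When $\eta\ge2/\beta$ (the intermediately and strongly subcritical cases (iv), (v)) the integrability above fails and the route is different: write $\Prox{z}{Z_t>0}=\mathbf{k}z\,\Exp{(2I_\nu^{(\eta)})^{-1/\beta}}-\Exp{R(1/(2I_\nu^{(\eta)}))}$ with $0\le R(v)\le\tfrac12\mathbf{k}^2z^2v^{2/\beta}$, and apply Lemma \ref{bpmomento}(i) with $p=1/\beta$ to get $\Exp{(I_\nu^{(\eta)})^{-1/\beta}}=e^{(2/\beta^2-2\eta/\beta)\nu}\Exp{(I_\nu^{(2/\beta-\eta)})^{-1/\beta}}$, with $(2/\beta^2-2\eta/\beta)\nu=\tfrac12(\sigma^2+2\mathbf{m})t$. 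If $\eta>2/\beta$ then $2/\beta-\eta<0$, so $I_\nu^{(2/\beta-\eta)}\uparrow I_\infty^{(2/\beta-\eta)}<\infty$ a.s.\ and dominated convergence with (\ref{dufresne}) give $\Exp{(I_\nu^{(2/\beta-\eta)})^{-1/\beta}}\to2^{1/\beta}\Gamma(\eta-1/\beta)/\Gamma(\eta-2/\beta)$; multiplying by $e^{-\frac12(2\mathbf{m}+\sigma^2)t}$ yields (\ref{bpsubfuerte}). If $\eta=2/\beta$ then $2/\beta-\eta=0$ and $\Exp{(I_\nu^{(0)})^{-1/\beta}}\sim2^{1/\beta}\Gamma(1/\beta)(2\pi\nu)^{-1/2}$ by the negative-moment asymptotics of the critical exponential functional (cf.\ \cite{MaYor}), so multiplying by $\sqrt{t}\,e^{\sigma^2t/2}$ yields (\ref{bpsubinter}). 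In both cases the remainder is lower order: Lemma \ref{bpmomento}(ii) bounds $\Exp{(I_\nu^{(\eta)})^{-2/\beta}}$ by $e^{(2/\beta^2-2\eta/\beta)\nu}$ times a convergent factor times $\Exp{(I_{\nu/2}^{(\eta-2/\beta)})^{-1/\beta}}$, and the latter tends to $0$ since $\eta-2/\beta\ge0$ forces $I_{\nu/2}^{(\eta-2/\beta)}\uparrow\infty$ a.s.

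The main obstacle is the precise $\nu\to\infty$ asymptotics of $\Exp{(I_\nu^{(\eta)})^{-p}}$ in the critical and weakly subcritical ranges -- essentially Lemmas 4.4--4.5 of \cite{CbMh} and the Matsumoto--Yor density estimates behind them -- together with the bookkeeping that attributes the phase transition at $\mathbf{m}=-\sigma^2$ to the loss of integrability of $f(z^\beta\cdot)$ against $\phi_\eta$ at the origin. Once those are in hand the remaining steps are the routine Esscher-transform identities of Lemma \ref{bpmomento} and dominated convergence, and every constant in the statement is recovered from the substitutions $\nu=\beta^2\sigma^2t/4$ and $\eta=-2\mathbf{m}/(\beta\sigma^2)$.
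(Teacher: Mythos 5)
Your proposal is correct and follows essentially the same route as the paper: representation (\ref{bpZyA}) plus Dufresne's identity for the supercritical case, Lemmas 4.4--4.5 of \cite{CbMh} for the critical and weakly subcritical cases, and Lemma \ref{bpmomento} with $p=1/\beta$ for the two remaining subcritical regimes, with all constants checking out under the substitutions $\nu=\beta^2\sigma^2 t/4$ and $\eta=-2\mathbf{m}/(\beta\sigma^2)$. The only cosmetic difference is that in cases (iv) and (v) you carry out the linearization of $f$ by hand, bounding the Taylor remainder via the second negative moment from Lemma \ref{bpmomento}(ii), whereas the paper packages exactly this step as an appeal to Lemma 4.1 of \cite{CbMh}.
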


\begin{proof} As in the proof of Theorem \ref{bplimitesnegativo}, and following the same notation as in \cite{CbMh}, we just provide the fundamental ideas of the proof. 

The supercritical case (i) follows from the identity in law by Dufresne (\ref{dufresne}). More precisely, 
 from  (\ref{dufresne}), (\ref{bpZyA}) and the Dominated Convergence Theorem, we deduce 
\begin{align*}
\underset{t\rightarrow\infty}{\lim}\Prox{z}{Z_t>0}=\Exp{1-\Expo{-z\mathbf{k}\Gamma_{-\eta}^{1/\beta}}}=1-\underset{n=0}{\overset{\infty}{\sum}}(-z\mathbf{k})^{n} \frac{\Gamma(\frac{n}{\beta}-\eta)}{n!\Gamma(-\eta)}.
\end{align*}
In order to prove the critical case (ii), we use Lemma 4.4 in  \cite{CbMh}. From identity (\ref{bpZyA}) and   applying Lemma 4.4 in \cite{CbMh} to
\begin{equation}\label{boing}
g(x):=1-\Expo{-z\mathbf{k}x^{1/\beta}}\leq z\mathbf{k}x^{1/\beta}, \quad x\geq 0,
\end{equation}
we get 
\begin{align*}
\underset{t\rightarrow\infty}{\lim}\sqrt{t}\ \Prox{z}{Z_t>0}&=\frac{2}{\beta\sigma}\ \underset{t\rightarrow\infty}{\lim}\sqrt{\frac{t\sigma^2\beta^2}{4}}\ \Exp{1-\Expo{-z\mathbf{k}\left(2 I^{(\eta)}_{\beta^2\sigma^{2}t/4}\right)^{-1/\beta}}}\\
&=\frac{\sqrt{2}}{\sqrt{\pi}\beta\sigma}\int_0^{\infty}\Big(1-\Expo{-z\mathbf{k}x^{1/\beta}}\Big)\frac{e^{-x}}{x}\ud x.
\end{align*}
By Fubini's theorem, it is easy to show that, for all $q\geq 0$
\[
\int_0^{\infty}\left(1-e^{-qx^{1/\beta}}\right)\frac{e^{-x}}{x}\ud x
=-\underset{n=1}{\overset{\infty}{\sum}}\frac{(-1)^n}{n!}\Gamma\left(\frac{n}{\beta}\right)q^{n},
\]
which implies (\ref{bpcritico}).

We now consider the weakly subcritical case (iii).  Recall that  inequality (\ref{boing}) still holds, then
 using Lemma 4.5 in  \cite{CbMh} and identity  (\ref{bpZyA}),  we obtain
\begin{align*}
\underset{t\rightarrow\infty}{\lim}t^{3/2}\ e^{\mathbf{m}^2 t/2\sigma^2}\Prox{z}{Z_{t}>0}&=\underset{t\rightarrow\infty}{\lim}t^{3/2}\ e^{\eta^2 \beta^2\sigma^{2}t/8}\Exp{g\left(\frac{z^{\beta}}{2I_{\beta^2\sigma^{2}t/4}^{(\eta)}}\right)}\\
&=\frac{8}{\beta^3\sigma^3}\int_{0}^{\infty}\left(1-\Expo{-z\mathbf{k}v^{1/\beta}}\right)\phi_{\eta}(v)\ud v,
\end{align*}
where $\phi_\eta$ is defined as in the statement of the Theorem.

In the remaining two cases we will use Lemma 4.1 in \cite{CbMh} and Lemma \ref{bpmomento}. For the intermediately subcritical case (iv), we observe that $\eta=2/\beta$. Hence, applying Lemma \ref{bpmomento}   with $p=1/\beta$, we get
$$\mathbb{E} \left[  \left( I_{t}^{(\eta)} \right)^{-1/\beta} \right] =   e^{-\frac{2}{\beta^{2}}t}  \mathbb{E} \left[  \left( I_{t}^{(0)} \right)^{-1/\beta} \right] \quad\mbox{and} \quad
\mathbb{E} \left[  \left( I_{t}^{(\eta)} \right)^{-2/\beta} \right] \leq   e^{-\frac{2}{\beta^{2}}t} \mathbb{E}  \left[ \left( I_{t/2}^{(0)} \right)^{-1/\beta} \right]^{2}.$$
Now, applying  Lemma 4.4 in \cite{CbMh}, we deduce
$$\underset{t\rightarrow\infty}{\lim} \sqrt{t}\ \Exp{\left( 2 I_{t}^{(\eta)} \right)^{-1/\beta}}=\int_{0}^{\infty}\frac{1}{\sqrt{2\pi}}\frac{e^{-a}}{a}a^{1/\beta}\ud a=\frac{1}{\sqrt{2\pi}}\Gamma\left(\frac{1}{\beta}\right),$$
and 
\[
\underset{t\rightarrow\infty}{\lim} \sqrt{t}\ \Exp{\left(  2I_{t}^{(\eta)} \right)^{-2/\beta}}=0.
\] 
Therefore, we can apply  Lemma 4.1 in \cite{CbMh}  with $c_t=\sqrt{t}\ e^{2t/\beta^2}$ and $Y_t=\left(2 I_t^{(\eta)}\right)^{-1/\beta}$, and obtain
\begin{align*}
\underset{t\rightarrow\infty}{\lim}\sqrt{t} e^{\sigma^2 t/2}\Prox{z}{Z_t>0}&=\underset{t\rightarrow\infty}{\lim}\sqrt{t} e^{\sigma^2  t/2}\Exp{1-\Expo{-z\mathbf{k}\left( 2I_{\beta^2\sigma^{2}t/4}^{(\eta)} \right)^{-1/\beta}}}\\
&=\underset{t\rightarrow\infty}{\lim}\sqrt{t} e^{\sigma^2 t/2}z\mathbf{k}
\Exp{\left( 2I_{\beta^2\sigma^{2}t/4}^{(\eta)} \right)^{-1/\beta}}\\
&=z \frac{\sqrt{2}}{\sqrt{\pi}\beta\sigma}\mathbf{k}\Gamma\left(\frac{1}{\beta}\right).
\end{align*}
Finally for the strongly subcritical case, we use again Lemma \ref{bpmomento} with $p=1/\beta$. First observe that $\eta-2/\beta>0$. Thus,  the Monotone Convergence Theorem and the identity of Dufresne (\ref{dufresne}) yield 
\begin{align*}
\underset{t\rightarrow\infty}{\lim} \Exp{\left( 2I_{t/2}^{(-(\eta-2/\beta))}\right)^{-1/\beta}}&=\Exp{\left(2I_{\infty}^{(-(\eta-2/\beta))}\right)^{-1/\beta}}=\Exp{(\Gamma_{\eta-2/\beta})^{1/\beta}}=\frac{\Gamma(\eta-1/\beta)}{\Gamma(\eta-2/\beta)}.
\end{align*}
Since $I_t^{(\eta-2/\beta)}$ goes to $ \infty$ as $t$ increases, from the Monotone Convergence Theorem, we get 
$$\underset{t\rightarrow\infty}{\lim} \Exp{\left( 2I_{t/2}^{(\eta-2/\beta)}\right)^{-1/\beta}}=0.$$
Hence by applying  Lemma 4.1 in \cite{CbMh} with $c_t=e^{-(2/\beta^2-2\eta/\beta) t}$ and $Y_t=\left(2 I_t^{(\eta)}\right)^{-1/\beta}$ we obtain that
\[
\begin{array}{l}
\displaystyle\underset{t\rightarrow\infty}{\lim}e^{-\frac{1}{2} (2\mathbf{m}+\sigma^2)t} \Prox{z}{Z_t>0}\displaystyle=\underset{t\rightarrow\infty}{\lim}c_{\beta^2\sigma^{2}t/4}\Exp{1-\Expo{-\mathbf{k}\left( 2I_{\beta^2\sigma^{2}t/4}^{(\eta)} \right)^{-1/\beta}}}\\
\hspace{4.9cm}\displaystyle=z \mathbf{k}\frac{\Gamma(\eta-1/\beta)}{\Gamma(\eta-2/\beta)}.
\end{array}
\]
This completes the proof.
\end{proof} 

\subsection{Conditioned stable CBBRE}
Here, we are interested in studying two conditioned versions of the stable CBBRE: the process conditioned to be never extinct (or  $Q$-process) and  the process conditioned on eventual extinction.
Our methodology follows similar arguments as those used in Lambert \cite{Lambert} and extend the results obtained by B\"oinghoff and Hutzenthaler \cite{CbMh} and Hutzenthaler \cite{Hu} in the stable case
with $\beta\in(0,1)$.  In particular, we obtain that the supercritical stable CBBRE conditioned on eventual extinction possesses a similar phase transition as the subcritical stable CBBRE. It is important to note that such a phase transition has not  been reported in the discrete case. For the continuous case, it was only observed in \cite{Hu} for $\beta=1$. In contrast with the subcritical regime,  the phase transition is given at $\beta\sigma^2$. We say that the supercritical stable CBBRE  conditioned on eventual extinction is  \textit{weakly supercritical} if $0<\mathbf{m}<\beta\sigma^2$, \textit{intermediately supercritical} if $\mathbf{m}=\beta\sigma^2$, and \textit{strongly supercritical} if $\mathbf{m}>\beta\sigma^2$. 
\subsubsection{The process conditioned to be never extinct}
In order to study the stable CBBRE conditioned to be never extinct, we need the following Lemma.

\begin{lemma}\label{bpintegraldeZ}
For every $t \geq 0$, $Z_t$ is integrable.
\end{lemma}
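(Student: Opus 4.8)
The plan is to exploit the explicit conditional Laplace transform of the stable CBBRE given in identity~(\ref{bplaplace3}), which encodes the full conditional law of $Z_t$ given the environment $K$. Indeed, from
\[
\e_z\Big[\exp\Big\{-\lambda Z_t e^{-(K_t+\alpha t)}\Big\}\Big| K\Big]=\Expo{-z\left(\lambda^{-\beta}+\beta c_\beta\int_0^t e^{-\beta(K_u+\alpha u)} \ud u\right)^{-1/\beta}},
\]
one sees that, conditionally on $K$, the random variable $Z_t e^{-(K_t+\alpha t)}$ has the Laplace transform of a (generalized) stable law with index $\beta\in(0,1)$, whose first moment is finite. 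Concretely, I would first recover the conditional first moment of $Z_t$ by differentiating the right-hand side at $\lambda=0^+$ (this is legitimate here precisely because $\beta\in(0,1)$, so $\lambda\mapsto (\lambda^{-\beta}+c)^{-1/\beta}$ is differentiable at $0$ with derivative $1$, unlike the $\beta<0$ case where $\e_z[Z_t]=\infty$). This yields the a.s.\ identity
\[
\Expconx{z}{Z_t}{K}=z\,e^{K_t+\alpha t}\qquad\text{a.s.}
\]
— equivalently, $Z_t e^{-(K_t+\alpha t)}$ is a conditional mean-one quantity, consistent with the (local) martingale property of $Z_te^{-K^{(0)}_t}$ already noted in the proof of Proposition~2.

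Next, I would remove the conditioning. Taking expectations over the environment gives
\[
\Expx{z}{Z_t}=z\,e^{\alpha t}\,\Exp{e^{K_t}}=z\,e^{\alpha t}\,\Exp{\Expo{\sigma B^{(e)}_t-\tfrac{\sigma^2}{2}t}}=z\,e^{\alpha t}<\infty,
\]
since $\Expo{\sigma B^{(e)}_t-\sigma^2 t/2}$ is an exponential martingale with unit mean. This already proves integrability of $Z_t$ for every fixed $t\geq 0$. The step requires only that we may interchange expectation and the $\lambda\downarrow 0$ differentiation, which is justified by monotone convergence: $\lambda^{-1}(1-\exp\{-\lambda Z_t e^{-(K_t+\alpha t)}\})$ increases to $Z_t e^{-(K_t+\alpha t)}$ as $\lambda\downarrow 0$, so the conditional expectations converge monotonically, and then a further monotone-convergence step over $K$ gives the unconditional statement.

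The only genuine subtlety — and the step I expect to need the most care — is the differentiation-under-the-expectation at $\lambda=0^+$ in the conditional identity: one must check that $\partial_\lambda$ of $\e_z[\exp\{-\lambda Z_te^{-(K_t+\alpha t)}\}\mid K]$ at $0^+$ really equals $\Expconx{z}{Z_te^{-(K_t+\alpha t)}}{K}$, i.e.\ that the conditional law is not itself heavy-tailed. This is exactly where $\beta>0$ is used: the exponent $(\lambda^{-\beta}+\beta c_\beta A_t)^{-1/\beta}$, with $A_t:=\int_0^t e^{-\beta(K_u+\alpha u)}\ud u\in(0,\infty)$ a.s., behaves like $\lambda(1+o(1))$ as $\lambda\downarrow0$ (since $\lambda^{-\beta}\to\infty$ dominates the constant $\beta c_\beta A_t$), so $1-\exp\{-z(\lambda^{-\beta}+\beta c_\beta A_t)^{-1/\beta}\}=z\lambda+o(\lambda)$, whence the conditional mean is $z e^{K_t+\alpha t}$ and in particular finite a.s. Once this is in place, the monotone-convergence arguments above are routine, and the lemma follows. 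Alternatively, and perhaps more cleanly, one can bypass differentiation entirely: by Fatou/monotone convergence $\Expx{z}{Z_t}=\lim_{\lambda\downarrow0}\lambda^{-1}\Expx{z}{1-\exp\{-\lambda Z_te^{-(K_t+\alpha t)}\}}\cdot e^{\alpha t}\Exp{e^{K_t}}$ (after the change of variable absorbing $e^{-(K_t+\alpha t)}$), and the inner expectation is $z\lambda+o(\lambda)$ by the same expansion of $(\lambda^{-\beta}+\beta c_\beta A_t)^{-1/\beta}$ together with dominated convergence using the bound $(\lambda^{-\beta}+\beta c_\beta A_t)^{-1/\beta}\le\lambda$.
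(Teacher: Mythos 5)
Your proposal is correct and follows essentially the same route as the paper: differentiate the conditional Laplace transform (\ref{bplaplace3}) at $\lambda=0^+$ to obtain $\Expconx{z}{Z_t}{K}=ze^{\mathbf{m}t+\sigma B^{(e)}_t}$ (note $K_t+\alpha t=\mathbf{m}t+\sigma B^{(e)}_t$), and conclude by integrating out the environment via the exponential martingale. You merely spell out the monotone/dominated convergence justifications for the interchange of limits, which the paper leaves implicit.
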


\begin{proof}
Differentiating  the Laplace transform (\ref{bplaplace3}) of $Z_t$  in $\lambda$  and taking limits   as $\lambda$ goes to $0$, on both sides,  we deduce
$$\Expconx{z}{Z_t }{K} =ze^{\mathbf{m}t+\sigma B_t^{(e)}},$$
which is an integrable random variable.
\end{proof}
 Recall that  
\begin{align*}
\eta=-\frac{2}{\beta\sigma^2}\mathbf{m}\qquad\textrm{and}\qquad \mathbf{k}=\left(\frac{\beta\sigma^2}{2c_\beta}\right)^{1/\beta}.
\end{align*}
We now define the function $U:[0,\infty)\rightarrow (0,\infty)$ as follows
\[
U(z)=
\left\{
\begin{array}{ll}
\vspace{0.2cm}- \frac{\sqrt{2}}{\sqrt{\pi}\beta\sigma}\displaystyle\underset{n=1}{\overset{\infty}{\sum}}\frac{(-z\mathbf{k})^{n}}{n!} \Gamma\left(\frac{n}{\beta}\right)& \mbox{if }  \mathbf{m}=0,\\
\vspace{0.2cm}\frac{8}{\beta^3\sigma^3}\displaystyle\int_{0}^{\infty}\left(1-e^{-z\mathbf{k}v^{1/\beta}}\right)\phi_{\eta}(v)\ud v &  \mbox{if }  \mathbf{m}\in(-\sigma^2,0),\\
\vspace{0.2cm}z \displaystyle\frac{\sqrt{2}}{\sqrt{\pi}\beta\sigma}\mathbf{k}\Gamma\left(\frac{1}{\beta}\right) & \mbox{if } \mathbf{m}=-{\sigma^2},\\
\vspace{0.2cm} z \mathbf{k}\displaystyle\frac{\Gamma(\eta-1/\beta)}{\Gamma(\eta-2/\beta)} & \mbox{if }  \mathbf{m}<-{\sigma^2},
\end{array}
\right.
\]
where the function $\phi_{\eta}$ is given as in Theorem \ref{bplimites}. We also introduce  
\[
\theta:=\theta(\mathbf{m}, \sigma)=\left\{
\begin{array}{ll}
0& \mbox{if } \mathbf{m}= 0,\\
\vspace{0.2cm}\displaystyle\frac{\mathbf{m}^2}{2\sigma^2}&  \mbox{if }  \mathbf{m}\in(-\sigma^2,0),\\
\vspace{0.2cm} -\displaystyle\frac{2\mathbf{m}+\sigma^2}{2}& \mbox{if } \mathbf{m}\le-{\sigma^2}.\\
\end{array}
\right.
\]
Let $(\mathcal{F}_t)_{t\ge 0}$ be the natural filtration generated by  $Z$ and $T_0=\inf\{t\geq 0: Z_t=0\}$ be the extinction time of the process $Z$. 
The next proposition states, in the critical and subcritical cases,  the existence of the $Q$-process.

\begin{proposition}\label{qprocess}Let $(Z_{t}, t\geq 0)$ be the stable CBBRE with index $\beta\in(0,1)$ defined by the SDE (\ref{csbpbrest}) with $Z_0=z>0$. Then for $\mathbf{m}\le 0$:
\begin{enumerate}
	\item[i)]  The conditional laws $\mathbb{P}_z\left(\cdot\mid  T_0>t+s \right)$ converge as $s\rightarrow \infty$ to a limit denoted by $\mathbb{P}_z^\natural,$ in the sense that  for any $t\geq 0$ and $\Lambda\in\mathcal{F}_t$,
	$$\underset{s\rightarrow\infty}{\lim} \mathbb{P}_z\left(\Lambda \mid T_0>t+s\right)=\mathbb{P}_z^\natural\left(\Lambda\right).$$
	\item[ii)] The probability measure $\mathbb{P}^\natural$ can be expressed as an $h$-transform of $\mathbb{P}$ based on the martingale
	$$D_t=e^{\theta t}U(Z_t),$$
	in the sense that
	$$\ud \mathbb{P}_z^\natural\big|_{\mathcal{F}_t}=\frac{D_t}{U(z)}\ud\mathbb{P}_z\big|_{\mathcal{F}_t}.$$
	\end{enumerate}
\end{proposition}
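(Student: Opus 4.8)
The plan is to follow the classical recipe for constructing a $Q$-process via a Doob $h$-transform, in the spirit of Lambert \cite{Lambert}, using the precise survival asymptotics of Theorem \ref{bplimites} as the essential input. First I would fix $t\geq 0$ and $\Lambda\in\mathcal{F}_t$, and write, for $s$ large,
\[
\mathbb{P}_z\left(\Lambda\mid T_0>t+s\right)=\frac{\mathbb{E}_z\left[\Ind{\Lambda}\,\mathbb{P}_{Z_t}\left(T_0>s\right)\right]}{\mathbb{P}_z\left(T_0>t+s\right)},
\]
using the Markov property at time $t$ together with the fact that on $\{T_0>t\}$ the event $\{T_0>t+s\}$ depends only on the shifted process, while on $\{T_0\le t\}$ both sides vanish since $0$ is absorbing. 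The point is then to divide numerator and denominator by the appropriate deterministic normalising sequence $c_s$ (namely $\sqrt{s}$, $s^{3/2}e^{\mathbf{m}^2 s/2\sigma^2}$, $\sqrt{s}\,e^{\sigma^2 s/2}$, or $e^{-(2\mathbf{m}+\sigma^2)s/2}$ according to the regime) and pass to the limit. By Theorem \ref{bplimites}, $c_s\,\mathbb{P}_y(T_0>s)\to U(y)$ pointwise in $y$, and $c_s/c_{t+s}\to e^{\theta t}$ in every regime, so formally the ratio converges to $\mathbb{E}_z[\Ind{\Lambda}\,e^{\theta t}U(Z_t)]/U(z)$, which is exactly $\mathbb{P}_z^\natural(\Lambda)$ with the claimed Radon--Nikodym density $D_t/U(z)=e^{\theta t}U(Z_t)/U(z)$.

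The main technical obstacle is justifying the interchange of limit and expectation in the numerator: one needs a dominated-convergence argument for $c_s\,\mathbb{P}_{Z_t}(T_0>s)$, uniformly enough in the random starting point $Z_t$. I would handle this by establishing a uniform bound of the form $c_s\,\mathbb{P}_y(T_0>s)\le C\,(1+y)$ for all $s\ge 1$ and $y\ge 0$, so that the integrable majorant $C(1+Z_t)$ applies, Lemma \ref{bpintegraldeZ} guaranteeing $\mathbb{E}_z[Z_t]<\infty$. Such a linear bound is plausible because $\mathbb{P}_y(T_0>s)\le \mathbb{E}_y[1\wedge Z_s]\le \mathbb{E}_y[Z_s\wedge 1]$ and, via the branching property, $\mathbb{P}_y(T_0>s)\le y\,\mathbb{P}_1(T_0>s)$ when combined with subadditivity of $y\mapsto \mathbb{P}_y(T_0>s)=1-\mathbb{E}[e^{-y\,(\text{something})}]$; indeed, conditionally on the environment $K$, the survival probability is $1-\exp\{-y(\beta c_\beta\int_0^s e^{-\beta(K_u+\alpha u)}\mathrm{d}u)^{-1/\beta}\}\le y\,(\beta c_\beta\int_0^s e^{-\beta(K_u+\alpha u)}\mathrm{d}u)^{-1/\beta}$, and taking expectations together with Theorem \ref{bplimites} gives $\mathbb{P}_y(T_0>s)\le y\,\mathbb{P}_1(T_0>s)/\mathbb{P}_1(T_0>s)\cdot(\dots)$; more cleanly, $\mathbb{P}_y(T_0>s)\le \min\{1,\,y\,c_s^{-1}\sup_{s'\ge 1}c_{s'}\mathbb{P}_1(T_0>s')\}\le C(1+y)c_s^{-1}$ once we know $\sup_{s\ge1}c_s\mathbb{P}_1(T_0>s)<\infty$, which follows from the convergence in Theorem \ref{bplimites}.

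Once (i) is in hand, part (ii) is almost a restatement: the computation above shows $\mathbb{P}_z^\natural(\Lambda)=\mathbb{E}_z[\Ind{\Lambda}D_t]/U(z)$ for $\Lambda\in\mathcal{F}_t$, so $(D_t/U(z))_{t\ge0}$ is by construction the density of a consistent family of measures, hence $D_t=e^{\theta t}U(Z_t)$ is a $\mathbb{P}_z$-martingale with respect to $(\mathcal{F}_t)$. I would also record the direct verification that $D$ is a martingale: using the Markov property, $\mathbb{E}_z[D_{t+s}\mid\mathcal{F}_t]=e^{\theta(t+s)}\mathbb{E}_{Z_t}[U(Z_s)]$, and the martingale identity $\mathbb{E}_y[U(Z_s)]=e^{-\theta s}U(y)$ is exactly the statement that the limit in Theorem \ref{bplimites} is harmonic, which one checks by writing $U(y)=\lim_{u\to\infty}c_u\,\mathbb{P}_y(T_0>u)$ and applying the Markov property at time $s$ inside the limit (the same domination argument as above legitimising the exchange). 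This gives a self-contained proof that $D$ is a genuine martingale, so the $h$-transform $\ud\mathbb{P}_z^\natural|_{\mathcal{F}_t}=(D_t/U(z))\,\ud\mathbb{P}_z|_{\mathcal{F}_t}$ is well-defined and agrees with the limiting conditional law from part (i), completing the proof.
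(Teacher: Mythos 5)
Your proposal is correct and follows essentially the same route as the paper: the Markov-property decomposition of $\mathbb{P}_z(\Lambda\mid T_0>t+s)$, a dominating bound for the ratio $\mathbb{P}_{Z_t}(T_0>s)/\mathbb{P}_z(T_0>t+s)$ that is linear in $Z_t$ (obtained from $1-e^{-x}\le x$ applied conditionally on the environment together with a matching lower bound on the denominator from Theorem \ref{bplimites}), dominated convergence via the integrability of $Z_t$ from Lemma \ref{bpintegraldeZ}, and then $\Lambda=\Omega$ plus the Markov property for the martingale property of $D$. The only slip is the direction of the normalising ratio: what converges to $e^{\theta t}$ is $c_{t+s}/c_s$, not $c_s/c_{t+s}$, but your limiting expression $\mathbb{E}_z[\mathbf{1}_{\Lambda}e^{\theta t}U(Z_t)]/U(z)$ is the correct one.
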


\begin{proof}
We first prove part (i). Let $z, s,t>0$, and $\Lambda\in\F_t$. From   the Markov property, we observe
\begin{align}\label{bpthetalimite}
\pro_z\left(\Lambda\mid T_0>t+s\right)&=\frac{\mathbb{P}_z\Big(\Lambda\cap \{ T_0>t+s\}\Big)}{\Prox{z}{T_0>t+s}}=\Expx{z}{\frac{ \Prox{Z_t}{Z_{s}>0}}{\Prox{z}{Z_{t+s}>0}} \mathbf{1}_{\Lambda} \Ind{ T_0>t}}.
\end{align}
On the other hand, since the mapping $t\mapsto  I_{t}^{(\eta)}$ is increasing and the function $f(x)=1-\Expo{-\mathbf{k} x^{1/\beta}}$ is decreasing, we deduce from (\ref{bpZyA}) and the Markov property that for any $z,y>0$,
\begin{align*}
0\leq \frac{ \Prox{y}{Z_{s}>0}}{\Prox{z}{Z_{t+s}>0}}&= 
\frac{\mathbb{E}\left[1-\Expo{-y\mathbf{k}\left(  2I_{\sigma^{2}\beta^2 s/4}^{(\eta)}\right)^{-1/\beta}}\right]} {\mathbb{E}\left[1-\Expo{-z\mathbf{k}\left( 2I_{\sigma^{2}\beta^2 (t+s)/4}^{(\eta)}\right)^{-1/\beta}}\right]}\\&\leq \frac{y\mathbf{k} \Exp{ \left(  2I_{\sigma^{2}\beta^2 s/4}^{(\eta)}\right)^{-1/\beta}}} {\mathbb{E}\left[1-\Expo{-z\mathbf{k}\left(  2I_{\sigma^{2}\beta^2 s/4}^{(\eta)}\right)^{-1/\beta}}\right]}.
\end{align*}
Moreover, since $ I_{t}^{(\eta)}$ diverge as $t$ goes to $ \infty$, we have 
$$ \frac{z\mathbf{k}\Exp{ \left(  2I_{\sigma^{2}\beta^2 s/4}^{(\eta)}\right)^{-1/\beta}}}{2}\leq\Exp{ 1-\Expo{-z\mathbf{k}\left(  2I_{\sigma^{2}\beta^2 s/4}^{(\eta)}\right)^{-1/\beta}}}\le z\mathbf{k}\Exp{ \left(  2I_{\sigma^{2}\beta^2 s/4}^{(\eta)}\right)^{-1/\beta}},
$$
for $s$ sufficiently large.
Then for any $s$ greater than some bound chosen independently of $Z_t(\omega)$, we necessarily have
$$0\leq \frac{ \Prox{Z_t}{Z_{s}>0}}{\Prox{z}{Z_{t+s}>0}}\leq \frac{2}{z}Z_t.$$
Now, from the asymptotic behaviour (\ref{bpcritico}), (\ref{bpsubdebil}), (\ref{bpsubinter}) and (\ref{bpsubfuerte}), we get
\[
\underset{s\rightarrow\infty}{\lim}\frac{ \Prox{Z_t}{Z_{s}>0}}{\Prox{z}{Z_{t+s}>0}}=\frac{e^{\theta t}U(Z_t)}{U(z)}.
\]
Hence,  Dominated Convergence and identity (\ref{bpthetalimite}) imply
\begin{equation}\label{idq-proc}
\underset{s\rightarrow\infty}{\lim} \mathbb{P}_z\left(\Lambda \mid T_0>t+s\right)=\mathbb{E}_z\left[\frac{e^{\theta t}U(Z_t)}{U(z)}\mathbf{1}_{ \Lambda} \right].
\end{equation}

Next, we prove part (ii). In order to do so, we use (\ref{idq-proc}) with $\Lambda=\Omega$ to deduce
\[
\mathbb{E}_z\left[e^{\theta t}U(Z_t) \right]=U(z).
\]
Therefore, from the Markov property, we obtain 
\[
\mathbb{E}_z\left[e^{\theta (t+s)}U(Z_{t+s})\Big|\mathcal{F}_s \right]=e^{\theta s}\mathbb{E}_{Z_s}\left[e^{\theta t}U(Z_{t}) \right]=e^{\theta s}U(Z_s),
\]
 implying that $D$ is a martingale. 
 \end{proof}
\subsubsection{The process conditioned on eventual extinction}
Here, we assume that $\mathbf{m}>0$ and define for $z>0$.
\[
U_\ast(z):=\displaystyle\underset{n=0}{\overset{\infty}{\sum}}\frac{(-z\mathbf{k})^{n}}{n!}\frac{\Gamma(n/\beta-\eta)}{\Gamma(-\eta)}.
\]
In the supercritical case, we are interested in the process conditioned on eventual extinction.
\begin{proposition}Let $(Z_{t}, t\geq 0)$ be the stable CBBRE with index $\beta\in(0,1)$ defined by the SDE (\ref{csbpbrest}) with $Z_0=z>0$. Then for $\mathbf{m}>0$, the conditional law 
\[
\mathbb{P}_z^\ast(\cdot)=\mathbb{P}_z\left(\cdot\mid  T_0<\infty \right),
\]
satisfies for any $t\geq 0$,
$$\ud \mathbb{P}_z^\ast\big|_{\mathcal{F}_t}=\frac{U_\ast(Z_t)}{U_\ast(z)}\ud\mathbb{P}_z\big|_{\mathcal{F}_t}.$$
Moreover, $(U_\ast(Z_t), t\ge 0)$ is a martingale.
\end{proposition}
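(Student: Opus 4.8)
The plan is to realise $\mathbb{P}_z^\ast$ as a Doob $h$-transform of $\mathbb{P}_z$ with harmonic function $h=U_\ast$, along the same lines as the proof of Proposition~\ref{qprocess}; the only genuine work is the probabilistic identification $U_\ast(z)=\mathbb{P}_z(T_0<\infty)$.

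First I would establish that identification. Since $0$ is absorbing and the stable CBBRE with $\beta\in(0,1)$ has only positive jumps, it reaches $0$ continuously, so $Z_{T_0}=0$ on $\{T_0<\infty\}$; hence $\{T_0<\infty\}=\bigcup_{t\ge0}\{Z_t=0\}$ with $\{Z_t=0\}$ non-decreasing in $t$, and therefore $\mathbb{P}_z(T_0<\infty)=\lim_{t\to\infty}\mathbb{P}_z(Z_t=0)=1-\lim_{t\to\infty}\mathbb{P}_z(Z_t>0)$. By Theorem~\ref{bplimites}(i), which applies because $\mathbf{m}>0$, this limit equals $U_\ast(z)$. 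Moreover, identity (\ref{eqabs}) gives $\mathbb{P}_z(Z_t=0)=\mathbb{E}_z\big[\exp\{-z(\beta c_\beta\int_0^t e^{-\beta(K_u+\alpha u)}\ud u)^{-1/\beta}\}\big]$, so letting $t\to\infty$ by monotone convergence yields $U_\ast(z)=\mathbb{E}_z\big[\exp\{-z(\beta c_\beta I_\infty)^{-1/\beta}\}\big]$, where $I_\infty=\int_0^\infty e^{-\beta(K_u+\alpha u)}\ud u$ is a.s.\ finite and positive since $\mathbf{m}>0$. Consequently $U_\ast(z)\in(0,1)$ for $z>0$ (so that $\mathbb{P}_z^\ast$ is well defined), while $U_\ast(0)=1=\mathbb{P}_0(T_0<\infty)$ since only the $n=0$ term survives at $z=0$. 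Recall also that $Z$ is conservative for $\beta\in(0,1)$, so $U_\ast(Z_t)$ is a.s.\ finite.

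Next I would compute the density on $\mathcal{F}_t$. Fix $t\ge0$ and $\Lambda\in\mathcal{F}_t$. Writing $\mathbf{1}_{\{T_0<\infty\}}=\mathbf{1}_{\{T_0\le t\}}+\mathbf{1}_{\{T_0>t\}}\mathbf{1}_{\{T_0<\infty\}}$ and applying the Markov property of $Z$ (Theorem~1) to the $\mathcal{F}_\infty$-measurable event $\{T_0<\infty\}$, one obtains $\mathbb{E}_z[\mathbf{1}_{\{T_0<\infty\}}\mid\mathcal{F}_t]=\mathbf{1}_{\{T_0\le t\}}+\mathbf{1}_{\{T_0>t\}}\,\mathbb{P}_{Z_t}(T_0<\infty)=U_\ast(Z_t)$ a.s., where on $\{T_0\le t\}$ one uses $Z_t=0$ and $U_\ast(0)=1$. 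Therefore
\[
\mathbb{P}_z\big(\Lambda\cap\{T_0<\infty\}\big)=\mathbb{E}_z\big[\mathbf{1}_\Lambda\,\mathbb{E}_z[\mathbf{1}_{\{T_0<\infty\}}\mid\mathcal{F}_t]\big]=\mathbb{E}_z\big[\mathbf{1}_\Lambda\,U_\ast(Z_t)\big],
\]
and dividing by $\mathbb{P}_z(T_0<\infty)=U_\ast(z)$ yields $\mathbb{P}_z^\ast(\Lambda)=U_\ast(z)^{-1}\mathbb{E}_z[\mathbf{1}_\Lambda U_\ast(Z_t)]$, which is the announced Radon--Nikodym formula.

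Finally, the martingale property follows at once: the identity $U_\ast(Z_t)=\mathbb{E}_z[\mathbf{1}_{\{T_0<\infty\}}\mid\mathcal{F}_t]$ a.s.\ obtained above exhibits $(U_\ast(Z_t))_{t\ge0}$ as the conditional expectations of a fixed bounded random variable along the filtration $(\mathcal{F}_t)_{t\ge0}$, hence a (bounded, uniformly integrable) martingale; equivalently, taking $\Lambda=\Omega$ gives $\mathbb{E}_z[U_\ast(Z_t)]=U_\ast(z)$, and combining this with the Markov property as at the end of the proof of Proposition~\ref{qprocess} gives $\mathbb{E}_z[U_\ast(Z_{t+s})\mid\mathcal{F}_s]=\mathbb{E}_{Z_s}[U_\ast(Z_t)]=U_\ast(Z_s)$. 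The main obstacle is the first step, namely showing that the function $U_\ast$ defined by the series coincides with the extinction probability $\mathbb{P}_z(T_0<\infty)$ (and in particular lies strictly between $0$ and $1$); once this is in place, the remainder is standard $h$-transform bookkeeping.
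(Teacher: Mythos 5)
Your proposal is correct and follows essentially the same route as the paper: identify $U_\ast(z)=\mathbb{P}_z(T_0<\infty)=\lim_{s\to\infty}\mathbb{P}_z(Z_s=0)$ via Theorem \ref{bplimites}(i), apply the Markov property at time $t$, and read off the Radon--Nikodym density and the martingale property. The only (cosmetic) difference is that the paper conditions $\{Z_{t+s}=0\}$ on $\mathcal{F}_t$ and lets $s\to\infty$ by dominated convergence, whereas you condition the tail event $\{T_0<\infty\}$ directly and thereby make explicit the identification $U_\ast(Z_t)=\mathbb{E}_z[\mathbf{1}_{\{T_0<\infty\}}\mid\mathcal{F}_t]$ that the paper leaves implicit.
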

\begin{proof}
Let $z,\, t\geq 0$ and $\Lambda\in \mathcal{F}_t$, then 
\begin{align*}
\mathbb{P}_z\left(\Lambda \Big| T_0<\infty\right)= \frac{\mathbb{P}_z\left(\Lambda\cap\{ T_0<\infty\}\right)}{\mathbb{P}_z\left(T_0<\infty\right)}=\underset{s\rightarrow\infty}{\lim}\frac{\mathbb{P}_z\left(\Lambda\cap\{Z_{t+s}=0\}\right)}{U_\ast(z)}.\end{align*}
On the other hand,   the Markov property implies
\[
\mathbb{P}_z\left(\Lambda\cap\{ Z_{t+s}=0\}\right)=\mathbb{E}_z\left[\mathbb{P}_z\Big(Z_{t+s}=0\Big| \mathcal{F}_t\Big)\mathbf{1}_{\Lambda}\right]=\mathbb{E}_z\left[\mathbb{P}_{Z_t}\Big(Z_{s}=0\Big)\mathbf{1}_{\Lambda}\right].
\]
Therefore using the Dominated Convergence Theorem, we deduce
\begin{align*}
\mathbb{P}_z\left(\Lambda \Big| T_0<\infty\right)=\underset{s\rightarrow\infty}{\lim}\frac{\mathbb{E}_z\left[\mathbb{P}_{Z_t}\Big(Z_{s}=0\Big)\mathbf{1}_{\Lambda}\right]}{U_\ast(z)}=\frac{\mathbb{E}_z\Big[U_\ast(Z_t)\mathbf{1}_{\Lambda}\Big]}{U_\ast(z)}.
\end{align*}
The proof that $(U_\ast(Z_t), t\ge0)$ is a martingale follows from the same argument as in the proof of part (ii) of Proposition \ref{qprocess}.
\end{proof}

Observe that $\mathbb{P}_z^\ast(Z_t>0)$ goes to 0 as $t\rightarrow \infty$.  Hence a natural problem to study is the rates of convergence of  the survival probability of the CBBRE conditioned on eventual extinction.  As we mentioned before,  we  obtain a   phase transition which is similar to the subcritical regime. 

It is important to note that the arguments that we will use below also provides the rate of convergence of the inverse of  exponential functionals of a Brownian motion with drift towards its limit,  the  Gamma random variable.
The latter comes from the following observation. Since $U_*(Z_t)$ is a martingale, we deduce
\begin{equation*}
\begin{split}
\mathbb{P}_z^\ast(Z_t>0)=&\mathbb{P}_z\left(Z_t>0\mid  T_0<\infty \right)=\frac{1}{U_{\ast}(z)}\left({U_{\ast}(z)}-{\Prox{z}{Z_t=0}}\right)\\
=&\frac{1}{U_{\ast}(z)}\left(\Exp{\Expo{-z\mathbf{k}\Gamma_{-\eta}^{1/\beta}}}-\Exp{\Expo{-z\mathbf{k}\left(2I_{\beta^2\sigma^{2}t/4}^{(\eta)}\right)^{-1/\beta}}}\right).
\end{split}
\end{equation*}
Another important identity that we will use in our arguments is the following  identity in law
\begin{equation*}
\left\{\frac{1}{I_t^{(\eta)}}, t>0\right\}\overset{(d)}{=}\left\{\frac{1}{I_t^{(-\eta)}}+2\Gamma_{-\eta}, t>0\right\},
\end{equation*}
where $\Gamma_{-\eta}$ and $I_t^{(-\eta)}$ are independent (see for instance identity (1.1) in Matsumoto and Yor \cite{MaYor}).
We also introduce
$$h(x,y)=\Expo{-\mathbf{k}x^{1/\beta}}-\Expo{-\mathbf{k}\left(x+y\right)^{1/\beta}}, \qquad x,y\geq 0.$$
Then
\begin{equation}\label{conditioned law}
\mathbb{P}_z^\ast(Z_t>0)=\frac{1}{U_{\ast}(z)}\Exp{h\left(z^{\beta}\Gamma_{-\eta},\frac{z^{\beta}}{2I_{\beta^2\sigma^{2}t/4}^{(-\eta)}}\right)},
\end{equation}
where $\Gamma_{-\eta}$ and  $I_{t}^{(-\eta)}$ are independent.

\begin{theorem}\label{supercritico }
Let $(Z_{t}, t\geq 0)$ be the supercritical stable CBBRE with index $\beta\in(0,1)$ defined by the SDE (\ref{csbpbrest}) with $Z_0=z>0$. \begin{itemize}

\item[i)] Weakly supercritical. If $\mathbf{m}\in (0,\beta\sigma^2)$, then
\begin{equation*}
\underset{t\rightarrow\infty}{\lim}t^{\frac{3}{2}} e^{\frac{\mathbf{m}^2 t}{2\sigma^2}}\mathbb{P}_z^\ast(Z_t>0)= \frac{8}{\beta^3\sigma^3\Gamma(|\eta|)U_{\ast}(z)}\int_{0}^{\infty}\int_{0}^{\infty}h(z^\beta x, z^\beta y)\phi_{|\eta|}(y) x^{-\eta-1}e^{-x} \ud x\ud y,
\end{equation*}
where
 $$\phi_{\eta}(y)=\int_{0}^{\infty}\int_{0}^{\infty}\frac{1}{\sqrt{2}\pi}\Gamma\left(\frac{\eta+2}{2}\right)e^{-y}y^{-\eta/2}u^{(\eta-1)/2}e^{-u}\frac{\sinh(\xi)\cosh(\xi)\xi}{(u+y\cosh(\xi)^{2})^{\frac{\eta+2}{2}}}\ud \xi \ud u. $$
\item[ii)] Intermediately supercritical. If $\mathbf{m}=\beta\sigma^2$, then
\begin{equation*}
\underset{t\rightarrow\infty}{\lim}\sqrt{t} e^{\beta^2\sigma^2 t/2}\mathbb{P}_z^\ast(Z_t>0)=\frac{z\mathbf{k}\sqrt{2}}{\beta^2\sigma \sqrt{\pi}U_{\ast}(z)}
\underset{n=0}{\overset{\infty}{\sum}}\frac{(-z\mathbf{k})^{n}}{n!} \Gamma\left(\frac{n+1}{\beta}+1\right)
\end{equation*}
\item[iii)] Strongly supercritical. If $\mathbf{m}>\beta\sigma^2$, then
\begin{equation*}
\underset{t\rightarrow\infty}{\lim} e^{\frac{\beta}{2} (2\mathbf{m}-\beta\sigma^2)t}\mathbb{P}_z^\ast(Z_t>0)=\frac{-z\mathbf{k}(\eta+2)}{\beta U_{\ast}(z)\Gamma(-\eta)}
\underset{n=0}{\overset{\infty}{\sum}}\frac{(-z\mathbf{k})^{n}}{n!} \Gamma\left(\frac{n+1}{\beta}-\eta-1\right)
\end{equation*}
\end{itemize}
\end{theorem}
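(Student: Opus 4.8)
The plan is to start from the representation~(\ref{conditioned law}),
\[
\mathbb{P}_z^\ast(Z_t>0)=\frac{1}{U_{\ast}(z)}\,\mathbb{E}\left[h\left(z^{\beta}\Gamma_{-\eta},\,\frac{z^{\beta}}{2I_{\beta^2\sigma^{2}t/4}^{(-\eta)}}\right)\right],
\]
in which $\Gamma_{-\eta}$ is independent of the exponential functional. Since $\mathbf{m}>0$ we have $\eta=-2\mathbf{m}/(\beta\sigma^2)<0$, so $\rho:=-\eta=2\mathbf{m}/(\beta\sigma^2)>0$ and, writing $\tau=\beta^2\sigma^2t/4$, the functional $I^{(\rho)}_\tau$ diverges a.s.\ as $\tau\to\infty$; hence $h(z^{\beta}\Gamma_{-\eta},z^{\beta}(2I^{(\rho)}_\tau)^{-1})\to0$, which re-proves $\mathbb{P}_z^\ast(Z_t>0)\to0$. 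I would first condition on $\Gamma_{-\eta}=\gamma$, obtain the precise rate of decay of $\mathbb{E}[h(z^{\beta}\gamma,z^{\beta}(2I^{(\rho)}_\tau)^{-1})]$ for each fixed $\gamma>0$, and then integrate back against the law of $\Gamma_{-\eta}$, whose density is $\Gamma(-\eta)^{-1}\gamma^{-\eta-1}e^{-\gamma}$. The ranges $0<\rho<2$, $\rho=2$ and $\rho>2$ are exactly the weakly, intermediately and strongly supercritical regimes of the statement.

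For fixed $\gamma$ the argument runs parallel to the proof of Theorem~\ref{bplimites}, using Lemmas~4.1, 4.4 and~4.5 of~\cite{CbMh} together with Lemma~\ref{bpmomento}. In the intermediately and strongly supercritical cases I would linearise $h$ in its second slot,
\[
h(z^{\beta}\gamma,z^{\beta}v)=a(\gamma)\,v+r_\gamma(v),\qquad a(\gamma)=\frac{\mathbf{k}z}{\beta}\,\gamma^{1/\beta-1}e^{-\mathbf{k}z\gamma^{1/\beta}},
\]
with $|r_\gamma(v)|\le C_\gamma v^{2}$ for $v\le1$ and $|r_\gamma(v)|\le C_\gamma v$ for $v\ge1$, and apply Lemma~4.1 of~\cite{CbMh} with $Y_\tau=(2I^{(\rho)}_\tau)^{-1}$. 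The moment inputs come from Lemma~\ref{bpmomento}(i): $\mathbb{E}[(I^{(\rho)}_\tau)^{-1}]=e^{(2-2\rho)\tau}\mathbb{E}[(I^{(2-\rho)}_\tau)^{-1}]$, and for $\rho>2$ the remaining factor converges, by Dufresne's identity~(\ref{dufresne}), to $\mathbb{E}[(I^{(2-\rho)}_\infty)^{-1}]=\mathbb{E}[2\Gamma_{\rho-2}]=2(\rho-2)$, whereas for $\rho=2$ it equals $\mathbb{E}[(I^{(0)}_\tau)^{-1}]\sim\sqrt{2/(\pi\tau)}$ by Lemma~4.4 of~\cite{CbMh}; this yields the normalisations $c_\tau=e^{2(\rho-1)\tau}$ and $c_\tau=\sqrt{\tau}\,e^{2\tau}$, which turn into $e^{\frac{\beta}{2}(2\mathbf{m}-\beta\sigma^2)t}$ and $\sqrt{t}\,e^{\beta^2\sigma^2t/2}$ after the substitution $\tau=\beta^2\sigma^2t/4$. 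In the weakly supercritical case $I^{(\rho)}_\tau$ grows too slowly for linearisation to capture the leading term, and I would instead apply Lemma~4.5 of~\cite{CbMh} directly to the bounded function $v\mapsto h(z^{\beta}\gamma,z^{\beta}v)$, which vanishes at $0$, to get, exactly as in Theorem~\ref{bplimitesnegativo}(iii),
\[
t^{3/2}e^{\mathbf{m}^2t/(2\sigma^2)}\,\mathbb{E}\left[h\left(z^{\beta}\gamma,\,\frac{z^{\beta}}{2I^{(\rho)}_{\beta^2\sigma^2t/4}}\right)\right]\ \longrightarrow\ \frac{8}{\beta^3\sigma^3}\int_{0}^{\infty}h(z^{\beta}\gamma,z^{\beta}v)\,\phi_{\rho}(v)\,\ud v,
\]
using $\rho^2\tau/2=\mathbf{m}^2t/(2\sigma^2)$, $\tau^{3/2}\propto t^{3/2}$ and $\phi_{\rho}=\phi_{|\eta|}$.

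To remove the conditioning I would invoke dominated convergence. A $\gamma$-uniform bound $h(z^{\beta}\gamma,z^{\beta}v)\le\min\{1,\,Cz^{\beta}v\}$ is available, because $h(x,y)=\mathbf{k}\beta^{-1}\int_{x}^{x+y}u^{1/\beta-1}e^{-\mathbf{k}u^{1/\beta}}\ud u\le\mathbf{k}\beta^{-1}y\sup_{u>0}u^{1/\beta-1}e^{-\mathbf{k}u^{1/\beta}}$ and the supremum is finite for $\beta\in(0,1)$; combined with the uniform-in-$\tau$ estimates underlying Lemmas~4.1 and~4.5 of~\cite{CbMh}, this gives $\sup_\tau c_\tau\,\mathbb{E}_I[h(z^{\beta}\gamma,z^{\beta}(2I^{(\rho)}_\tau)^{-1})]\le C'$ uniformly in $\gamma$, legitimising the interchange of $\lim_\tau$ with the (probability) expectation over $\Gamma_{-\eta}$. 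For $\rho>2$ this produces
\[
e^{\frac{\beta}{2}(2\mathbf{m}-\beta\sigma^2)t}\,\mathbb{P}_z^\ast(Z_t>0)\ \longrightarrow\ \frac{\rho-2}{U_{\ast}(z)}\,\mathbb{E}\big[a(\Gamma_{-\eta})\big];
\]
writing $\mathbb{E}[a(\Gamma_{-\eta})]=\frac{\mathbf{k}z}{\beta\Gamma(-\eta)}\int_0^\infty\gamma^{1/\beta-\eta-2}e^{-\mathbf{k}z\gamma^{1/\beta}-\gamma}\ud\gamma$, expanding $e^{-\mathbf{k}z\gamma^{1/\beta}}=\sum_{n\ge0}\frac{(-\mathbf{k}z)^n}{n!}\gamma^{n/\beta}$ termwise, using $\int_0^\infty\gamma^{c}e^{-\gamma}\ud\gamma=\Gamma(c+1)$, and recalling $\rho-2=-(\eta+2)$, gives the announced closed form of~(iii). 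The intermediately supercritical case is identical, with $\Gamma_{-\eta}$ of density $\gamma e^{-\gamma}$ (here $\eta=-2$); and in the weakly supercritical case one keeps $h$ unexpanded and integrates $h(z^{\beta}\gamma,z^{\beta}v)$ against $\phi_{|\eta|}(v)$ and against $\Gamma(-\eta)^{-1}\gamma^{-\eta-1}e^{-\gamma}$, which is precisely the stated double integral. As noted in the text, the same computation, specialised, also yields the rate at which $(2I_t^{(\eta)})^{-1/\beta}$ approaches $\Gamma_{-\eta}^{1/\beta}$.

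The conceptual scheme is a routine extension of Theorem~\ref{bplimites}; the genuine difficulty is the second-moment estimate $c_\tau\,\mathbb{E}[(I^{(\rho)}_\tau)^{-2}]\to0$ required to apply Lemma~4.1 of~\cite{CbMh} in cases~(ii) and~(iii). Lemma~\ref{bpmomento}(ii) only supplies the upper bound $\mathbb{E}[(I^{(\rho)}_\tau)^{-2}]\le e^{(2-2\rho)\tau}\mathbb{E}[(I^{(2-\rho)}_{\tau/2})^{-1}]\mathbb{E}[(I^{(\rho-2)}_{\tau/2})^{-1}]$, so one must control a positive-drift factor $\mathbb{E}[(I^{(\rho-2)}_{\tau/2})^{-1}]$ that, for $\rho\in(2,4)$, lies itself in the slow-growth regime handled by Lemma~4.5 of~\cite{CbMh} (and is a boundary case at $\rho=4$), and then check that the product beats $c_\tau^{-1}$; a short computation shows the leftover exponent to be $-\frac{1}{4}(\rho-2)^2\tau$ up to a power of $\tau$, hence $o(1)$ for every $\rho>2$ (and $\rho=2$ comes with an extra $\tau^{-1/2}$). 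Securing this estimate, together with the uniformity in $\gamma$ of the fixed-$\gamma$ limits, is the main point to be carried out in detail; everything else follows the pattern already established in Section~4.
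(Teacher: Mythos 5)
Your proposal is correct in substance and follows the same architecture as the paper's proof: both start from the representation \eqref{conditioned law}, exploit the independence of $\Gamma_{-\eta}$ and $I^{(-\eta)}_{\cdot}$, split the analysis according to $\rho=-\eta$ versus $2$, and feed the moment asymptotics of $(I^{(\rho)}_\tau)^{-1}$ from Lemma \ref{bpmomento} and Lemmas 4.1, 4.4, 4.5 of \cite{CbMh} into the final Gamma-integral computations, which you carry out correctly (including the normalisations $c_\tau$ and the series expansions). The genuine difference lies in how the nonlinearity of $h$ in its second argument is tamed in cases (ii) and (iii). The paper uses the mean-value-theorem sandwich \eqref{cotas}, whose upper bound carries an $\epsilon$-parameter and a $y^{1/\beta}$ error term; the error is then shown to vanish after normalisation by computing the decay of $\mathbb{E}[(2I^{(-\eta)}_\tau)^{-1/\beta}]$ in the three subcases $-\eta<2/\beta$, $=2/\beta$, $>2/\beta$ (displays \eqref{eq 2}--\eqref{eq 4}), and the two bounds are matched by letting $\epsilon\downarrow0$. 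You instead perform a first-order Taylor expansion with remainder $r_\gamma$, which trades the $y^{1/\beta}$ error for a $y^2\wedge y$ error and therefore requires the second negative moment $c_\tau\,\mathbb{E}[(I^{(\rho)}_\tau)^{-2}]\to0$ --- precisely the estimate you single out as the main difficulty, and which the paper never needs. Your sketch of that estimate via Lemma \ref{bpmomento}(ii) is sound, though note that the leftover exponent $-\tfrac14(\rho-2)^2\tau$ you quote is the correct expression only for $\rho\in(2,4)$; for $\rho>4$ the positive-drift factor decays like $e^{(3-\rho)\tau}$ by Dufresne rather than by Lemma 4.5, so the conclusion $o(1)$ still holds but through a further trichotomy, exactly parallel to the paper's $2/\beta$ trichotomy. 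You also reverse the order of integration (conditioning on $\Gamma_{-\eta}=\gamma$ first and using dominated convergence with the uniform bound $h(x,y)\le Cy$, versus the paper's device of integrating out $\Gamma_{-\eta}$ first to form $g(u)=\mathbb{E}[h(z^\beta\Gamma_{-\eta},u)]$ in case (i)); this is harmless given your uniform domination, but in case (i) you should still verify, as the paper does, that Lemma 4.5 of \cite{CbMh} genuinely applies to the relevant function class (the paper itself only claims this after re-running that lemma's proof). Net effect: your route avoids the $\epsilon\to0$ limit at the cost of a second-moment estimate; the paper's route avoids the second moment at the cost of the $\epsilon$-sandwich. Both are viable.
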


\begin{proof}
Similarly as in the proof of Theorems  \ref{bplimitesnegativo} and \ref{bplimites}, and following the same notation as in \cite{CbMh}, we just provide the fundamental ideas of the proof. 

We first consider the weakly supercritical case (i). Note that for each $x,y>0$
\begin{equation*}
h(x,y)\leq \frac{\mathbf{k}}{\beta}x^{1/\beta-1}(y\vee y^{1/\beta}).
\end{equation*}
Since  $\Gamma_{-\eta}$  and $I_{t}^{(-\eta)}$ are independent, we deduce 
\[
\Exp{h\left(z^{\beta}\Gamma_{-\eta},\frac{z^{\beta}}{2I_{\beta^2\sigma^{2}t/4}^{(-\eta)}}\right)}=\mathbb{E}\left[g\left( \frac{z^{\beta}}{2I_{\beta^2\sigma^{2}t/4}^{(-\eta)}}\right)\right], 
\]
where $g(u):=\mathbb{E}\left[h\left(z^{\beta}\Gamma_{-\eta},u\right)\right]$. From the inequality of above, we get $g(u)\le C(u\lor u^{1/\beta})$ for $C>0$ that depends on $\mathbf{k}, \beta$ and $\eta$. 

Following step by step the proof of Lemma 4.5 in \cite{CbMh}, we can deduce that the statement also holds for our function $g$ with  $b=1$.  Actually in the proof of Lemma 4.5 in \cite{CbMh}, the authors use the inequality on their statement in order to apply the Dominated Convergence Theorem  and they split  the integral in (4.24) in \cite{CbMh} into two integrals, one over $[0,1]$ and another  over $(1,\infty)$. In our case, we can take on the integral over $[0,1]$ the function $Cu$ and on the integral over $(1,\infty)$ the function $C u^{1/\beta}$ and the result will not change. Therefore
\begin{align*}
\underset{t\rightarrow\infty}{\lim}t^{3/2}\ e^{\mathbf{m}^2 t/2\sigma^2}\mathbb{P}_z^\ast(Z_t>0)&=\underset{t\rightarrow\infty}{\lim}\frac{t^{3/2}\ e^{\eta^2 \beta^2\sigma^{2}t/8}}{U_{\ast}(z)}\mathbb{E}\left[g\left( \frac{z^{\beta}}{2I_{\beta^2\sigma^{2}t/4}^{(-\eta)}}\right)\right]\\
&=\frac{8}{\beta^3\sigma^3\Gamma(|\eta|)U_{\ast}(z)}\int_{0}^{\infty}\int_{0}^{\infty}h(z^\beta x, z^\beta y)\phi_{|\eta|}(y) x^{-\eta-1}e^{-x} \ud x\ud y,
\end{align*}
where $\phi_{|\eta|}$ is defined as in the statement of the Theorem.

In the remaining two cases we use the following inequalities, which hold by the Mean Value Theorem. Let $\epsilon>0$ then, for each $x,y\geq 0$
\begin{equation}\label{cotas}
\begin{split}
\frac{\mathbf{k}}{\beta}e^{-\mathbf{k}x^{1/\beta}}x^{1/\beta-1}y\leq h(x,y)
\leq \frac{\mathbf{k}}{\beta}e^{-\mathbf{k}x^{1/\beta}}\left((x+\epsilon)^{1/\beta-1}y+\left(\frac{x}{\epsilon}+1\right)^{1/\beta-1}y^{1/\beta}\right).
\end{split}
\end{equation}

\noindent For the intermediately supercritical case (ii), we note that $-\eta=2$. From  Lemma \ref{bpmomento} (with $p=1$) and Lemma 4.4 in \cite{CbMh} we deduce
$$\underset{t\rightarrow\infty}{\lim}\sqrt{t}e^{2t}\Exp{\frac{1}{2I_t^{(2)}}}=\frac{1}{\sqrt{2\pi}}.$$
On the other hand, from Lemma 4.5 in \cite{CbMh} with $g(u)=u^{1/\beta}$, we have
$$\underset{t\rightarrow\infty}{\lim}t^{3/2}e^{2t}\Exp{\frac{1}{(2I_t^{(2)})^{1/\beta}}}=\int_0^t g(u)\phi_{2}(u) \ud u,$$
where $\phi_2$ is defined as in the statement of  the Theorem. Therefore by the previous limits, the independence between $\Gamma_{2}$  and $I_{t}^{(2)}$, identity \eqref{conditioned law} and inequalities \eqref{cotas}, we have that for $\epsilon>0$ the following inequalities hold
\begin{equation*}
\begin{split}
\frac{z\mathbf{k}\sqrt{2}}{\beta^2\sigma \sqrt{\pi}U_{\ast}(z)}\Exp{e^{-\mathbf{k}z\Gamma_{2}^{1/\beta}}\Gamma_{2}^{1/\beta-1}}\leq \underset{t\rightarrow\infty}{\lim}\sqrt{t} e^{\beta^2\sigma^2 t/2}&\mathbb{P}_z^\ast(Z_t>0)\\
&\hspace{-1cm}\leq \frac{z^{\beta}\mathbf{k}\sqrt{2}}{\beta^2\sigma \sqrt{\pi}U_{\ast}(z)}\Exp{e^{-\mathbf{k}z\Gamma_{2}^{1/\beta}}(z^{\beta}\Gamma_{2}+\epsilon)^{1/\beta-1}}.
\end{split}
\end{equation*}
Thus our claim holds true by taking limits as $\epsilon$ goes to $ 0$. 

Finally, we use similar arguments for the strongly supercritical case (iii). Observe from Lemma \ref{bpmomento} and the identity in law by Dufresne \eqref{dufresne} that
\begin{equation}\label{eq 1}
\underset{t\rightarrow\infty}{\lim}e^{-2(1+\eta)t}\Exp{\frac{1}{2I_t^{(-\eta)}}}=\Exp{\Gamma_{-(\eta+2)}},
\end{equation}
where $\Gamma_{-(\eta+2)}$ is a Gamma r.v. with parameter $-(\eta+2)$. If $-\eta<2/\beta$, Lemma 4.5 in \cite{CbMh} imply
\begin{equation}\label{eq 2}
\underset{t\rightarrow\infty}{\lim}t^{3/2}e^{\eta^2t/2}\Exp{\frac{1}{(2I_t^{(-\eta)})^{1/\beta}}}=\int_0^{\infty}y^{1/\beta}\phi_{|\eta|}(y) \ud y,
\end{equation}
where $\phi_{|\eta|}$ is defined as in the statement of  the Theorem. If $-\eta=2/\beta$, from Lemma \ref{bpmomento} and Lemma 4.4 in \cite{CbMh}, we get
\begin{equation}\label{eq 3}
\underset{t\rightarrow\infty}{\lim}\sqrt{t}e^{2t/\beta^2}\Exp{\frac{1}{(2I_t^{(-\eta)})^{1/\beta}}}=\frac{\Gamma(1/\beta)}{\sqrt{2\pi}}.
\end{equation}
Next, if $-\eta>2/\beta$, from Lemma \ref{bpmomento} and the identity in law by Dufresne \eqref{dufresne} we get 
\begin{equation}\label{eq 4}
\underset{t\rightarrow\infty}{\lim}e^{-2t(1/\beta+\eta)/\beta}\Exp{\frac{1}{(2I_t^{(-\eta)})^{1/\beta}}}=\Exp{\Gamma_{-(\eta+2/\beta)}^{1/\beta}},
\end{equation}
where $\Gamma_{-(\eta+2/\beta)}$ is a Gamma r.v. with parameter $-(\eta+2/\beta)$. Therefore, from the independence between $\Gamma_{-\eta}$  and $I_{t}^{(-\eta)}$, inequalities \eqref{cotas} and the limits in \eqref{eq 1}, \eqref{eq 2},\eqref{eq 3} and \eqref{eq 4}, we deduce that for $\epsilon>0$, we have
\begin{equation*}
\begin{split}
\frac{z\mathbf{k}}{\beta U_{\ast}(z)}\Exp{e^{-\mathbf{k}z\Gamma_{-\eta}^{1/\beta}}\Gamma_{-\eta}^{1/\beta-1}}\Exp{\Gamma_{-(\eta+2)}}&\leq \underset{t\rightarrow\infty}{\lim} e^{-\frac{\beta}{2} (2\mathbf{m}-\beta\sigma^2)t}\mathbb{P}_z^\ast(Z_t>0)\\
&\leq \frac{z^{\beta}\mathbf{k}}{\beta U_{\ast}(z)}\Exp{e^{-\mathbf{k}z\Gamma_{-\eta}^{1/\beta}}(z^{\beta}\Gamma_{-\eta}+\epsilon)^{1/\beta-1}}\Exp{\Gamma_{-(\eta+2)}}.
\end{split}
\end{equation*}
The proof is completed once we take limits as $\epsilon$ goes to $0$.
\end{proof}

\section{The immigration case.}
In this section, we introduce  continuous state branching processes with immigration in a Brownian random
environment. In particular, this class of processes is  an extension of the Cox-Ingersoll-Ross model in  a random environment.
For simplicity, we introduce such class of processes under the assumption that the branching mechanism posses finite mean.

Recall that a CB-process with immigration (or CBI-process) is a strong Markov process taking values in $[0,\infty]$, where 0 is no longer an absorbing state. It is characterized by its branching mechanism,  
$$\psi(u)=-\alpha u+\gamma^2 u^{2}+\int_{(0,\infty)}(e^{-ux}-1+ux)\mu(\ud x),$$ and its immigration mechanism, $$\phi(u)=\mathtt{d}u+\int_0^{\infty} (1-e^{-u t})\nu(\ud t),$$ where $\alpha \in \R$, $\gamma,\mathtt{d}\geq 0$ and 
\begin{equation}\label{incondimm}
\int_0^{\infty}(x\wedge x^2)\mu(\ud x)+\int_0^{\infty}(1\wedge x)\nu(\ud x)<\infty.
\end{equation}
It is well-known that if $(Y_t, t\ge 0)$ is a process in this class, then its semi-group is characterized by
$$\Expx{x}{e^{-\lambda Y_t}}=\Expo{-xu_t(\lambda)-\int_0^t\phi(u_s)\ud s}, \qquad \mbox{for} \quad \lambda\geq 0,$$
where $u_t$ solves 
\begin{align*}
\frac{\partial u_{t}(\lambda)}{\partial t}=-\psi(u_{t}(\lambda)), \qquad u_{0}(\lambda)=\lambda.
\end{align*}
According to Fu and Li \cite{FuLi}, a CBI-process can be defined as the unique non-negative strong solution of the stochastic differential equation
\begin{align*}
Y_t=Y_0+&\int_0^t(\mathtt{d}+\alpha Y_s) \ud s+\int_0^t \sqrt{2\gamma^2 Y_s}\ud B_s\\
&\hspace{2cm}+\int_0^t\int_{(0,\infty)}\int_0^{Y_{s-}}z\widetilde{N}(\ud s,\ud z,\ud  u)+\int_0^t\int_{(0,\infty)}z M(\ud s,\ud z),
\end{align*}
where $B=(B_t,t\geq 0)$ is a standard Brownian motion, $N(\ud s,\ud z,\ud u)$ and $M(\ud s,\ud z)$ are two independent Poisson random measures with intensities $\ud s\mu(\ud z)\ud u$ and $\ud s\nu(\ud z)$, respectively, satisfying the integral condition (\ref{incondimm}) and $\widetilde{N}$ is the compensated measure of $N$. The processes $B$, $N$ and $M$ are mutually independent.


Motivated from the definition of CBBRE, we introduce a continuous state branching process with immigration in a Brownian random
environment (in short a CBIBRE) as the unique non-negative strong solution of the
stochastic differential equation
\begin{equation}\label{cbibre}\begin{split}
Z_t=&Z_0 +\int^t_0\left(\mathtt{d} +\alpha Z_s\right)\ud s +\int_0^t \sqrt{2\gamma^2 Y_s}\ud B_s+\sigma\int_0^t  Z_s\ud B^{(e)}_s\\
&\hspace{2cm}+\int_0^t\int_{(0,\infty)}\int_0^{Z_{s-}}z\widetilde{N}(\ud s,\ud z,\ud u)+\int_0^t\int_{(0,\infty)}zM(\ud s,\ud z),
\end{split}
\end{equation}
where  $\sigma>0$  and  $(B^{(e)}_t,t\geq 0)$ is a standard Brownian motion independent of $B$ and the Poisson random measures $N$ and $M$. 
Similarly as in the case with no immigration, we  define the auxiliary process
\begin{align*}
K^{(0)}_t=\sigma B^{(e)}_t+\mathbf{m}t, \qquad \textrm{for }\quad t\geq 0,
\end{align*}
where 
\[
\mathbf{m}=\alpha-\frac{\sigma^2}{2}.
\] The following Theorem provides the existence of the CBIBRE as the unique  strong solution of  (\ref{cbibre}). 

\begin{theorem}\label{CBIBRE}
The stochastic differential equation (\ref{cbibre}) has a unique non-negative strong solution. The process $Z=(Z_t, t\geq 0)$ is a Markov process and its infinitesimal generator $\mathcal{A}$ satisfies, for every $f\in C^2_b(\bar{\R}_+),$
\begin{equation}\label{bpigenerador}\begin{split}
\mathcal{A}f(x)&=\frac{1}{2}\sigma^2x^2f''(x)+\Big(x \alpha+\mathtt{d}\Big)f'(x)+x\gamma^2f''(x)\\
&+\int_{(0,\infty)}(f(x+z)-f(x))\nu(\ud z)+x\int_{(0,\infty)} \left(f(x+z)-f(x)-zf'(x)\right)\mu(\ud z).
\end{split}
\end{equation}
Furthermore, the process $Z$, conditioned on $K^{(0)}$, satisfies  the branching property and for every $t>0$
\begin{equation}\label{bpilaplacek}
\begin{split}
&\Expconx{z}{\Expo{-\lambda Z_t e^{-K^{(0)}_t}}}{K^{(0)}}\\
&\hspace{3cm}=\Expo{-zv_t(0,\lambda,K^{(0)})-\int_0^t\phi\Big(v_t(r,\lambda,K ^{(0)})e^{-K^{(0)}_r}\Big)\ud r}\quad a.s.,
\end{split}
\end{equation}
where for every $(\lambda,\delta)\in(\R_+,C(\R_+))$, $v_t: s\in[0,t]\mapsto v_t(s,\lambda,\delta)$ is the unique solution of the backward differential equation
\begin{align}\label{bpibackward}
\frac{\partial}{\partial s}v_t(s,\lambda, \delta)=e^{\delta_s}\psi_0(v_t(s,\lambda,\delta)e^{-\delta_s}),\qquad v_t(t,\lambda, \delta)=\lambda,
\end{align}
and $\psi_0(\lambda)=\psi(\lambda)+\alpha \lambda,$ for $\lambda\ge 0$.
\end{theorem}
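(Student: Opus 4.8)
The plan is to follow closely the proof of Theorem~1, treating the two new features --- the immigration drift $\mathtt{d}\,\ud s$ and the immigration jump measure $M(\ud s,\ud z)$ with state-independent intensity $\ud s\,\nu(\ud z)$ --- as lower-order perturbations that do not interact with the branching scaling. First, for existence and uniqueness of a non-negative strong solution of (\ref{cbibre}), I would truncate the coefficients exactly as in (\ref{csbpbren}) (replace $Z_s$ by $Z_s\wedge n$ and the jump sizes by $z\wedge n$, with the immigration integral against $M$ truncated as well), write the truncated equation in the Dawson--Li form with an additional driving Poisson measure for the immigration term, and invoke Theorem~2.5 in \cite{DaLi}; the integrability assumption (\ref{incondimm}) ensures the hypotheses there hold. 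Then I would patch the truncated solutions along the stopping times $\tau_m=\inf\{t\ge 0:Z^{(m)}_t\ge m\}$ and verify uniqueness up to the explosion time, noting that a jump to $+\infty$ can only come from an atom of $N$ or of $M$, shared by any two solutions; the conclusion then follows from Theorem~137 in \cite{situ}. The Markov property follows because there is a strong solution driven by time-homogeneous L\'evy-type noises (Theorem~V.32 in \cite{Protter}), and an application of It\^o's formula identifies the generator (\ref{bpigenerador}).

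For the branching property of $Z$ conditioned on $K^{(0)}$, I would argue as in the proof of Theorem~1, i.e.\ following Caballero et al.\ \cite{CLU} and Bansaye \cite{Bapa}: take two solutions started from $x$ and from $y$, driven by mutually independent pairs of noises but by the same environment $B^{(e)}$, and show that their sum solves (\ref{cbibre}) started from $x+y$ with the immigration term counted once. This requires reconstructing a single Brownian motion (via L\'evy's characterisation) and a single pair of Poisson measures $(N,M)$ from the two driving families; for $N$ one uses the usual layer decomposition on the $u$-coordinate ($\{u<Z_{s-}\}$, $\{Z_{s-}<u<Z'_{s-}\}$, $\{Z'_{s-}<u\}$), while $M$ carries no $u$-coordinate and is simply shared between the two processes and an auxiliary independent copy. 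The branching/affine structure is then inherited from the underlying CBI-process of \cite{FuLi}.

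The heart of the proof is the Laplace identity (\ref{bpilaplacek}). Set $\widetilde Z_t=Z_te^{-K^{(0)}_t}$. An application of It\^o's formula to the product $Z_te^{-K^{(0)}_t}$ shows that the $\alpha Z_s\,\ud s$ drift of $Z$ is absorbed exactly (because $\mathbf{m}=\alpha-\sigma^2/2$ was chosen so), and that, conditionally on $K^{(0)}$, the process $\widetilde Z$ has continuous drift $\mathtt{d}\,e^{-K^{(0)}_s}\,\ud s$, diffusion coefficient $\sqrt{2\gamma^2\widetilde Z_s e^{-K^{(0)}_s}}$, branching jumps of size $ze^{-K^{(0)}_s}$ at rate $\widetilde Z_{s-}e^{K^{(0)}_s}\mu(\ud z)\,\ud s$, and immigration jumps of size $ze^{-K^{(0)}_s}$ at rate $\nu(\ud z)\,\ud s$. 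Given a candidate function $v_t:[0,t]\to[0,\infty)$, differentiable in $s$ with $v_t(t,\lambda,K^{(0)})=\lambda$, I would apply It\^o's formula to
\[
F(s,\widetilde Z_s)=\exp\Big\{-\widetilde Z_s\,v_t(s,\lambda,K^{(0)})-\int_s^t\phi\big(v_t(r,\lambda,K^{(0)})e^{-K^{(0)}_r}\big)\,\ud r\Big\}
\]
and observe that $F(s,\widetilde Z_s)$ is a local martingale given $K^{(0)}$ precisely when the $\ud s$-integrand vanishes. Writing out that integrand, using $\partial_s\int_s^t\phi(\cdots)\,\ud r=-\phi\big(v_t(s,\lambda,K^{(0)})e^{-K^{(0)}_s}\big)$ together with the explicit form $\phi(u)=\mathtt{d}u+\int_0^\infty(1-e^{-ut})\nu(\ud t)$, one sees that the three immigration-related contributions --- the $\mathtt{d}\,e^{-K^{(0)}_s}\partial_xF$ term, the $\nu$-integral arising from the jumps of $M$, and the $\phi$-term coming from $\partial_sF$ --- cancel identically, and what remains forces $v_t$ to solve the backward equation (\ref{bpibackward}) with $\psi_0(\lambda)=\psi(\lambda)+\alpha\lambda=\gamma^2\lambda^2+\int_{(0,\infty)}(e^{-\lambda x}-1+\lambda x)\mu(\ud x)$. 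Conversely, taking $v_t$ to be the solution of (\ref{bpibackward}) --- which exists and is unique by Picard-Lindel\"of, $\psi_0$ being locally Lipschitz on $(0,\infty)$ and $v_t(\cdot)$ staying in the compact interval $[0,\lambda]$ since $\psi_0\ge 0$ --- makes $F$ a true martingale (it is bounded by $1$, as $v_t\ge 0$ and $\phi\ge 0$); evaluating it at $s=0$ and $s=t$ and taking conditional expectation given $K^{(0)}$ yields (\ref{bpilaplacek}).

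The main obstacle I expect is the bookkeeping in this last step: one has to check that the integrating factor $\exp\{-\int_s^t\phi(v_t(r,\lambda,K^{(0)})e^{-K^{(0)}_r})\,\ud r\}$ is exactly the correction that annihilates the immigration terms in the It\^o expansion of $F$, and that the small-jump part of $M$ is under control, which uses $\int_0^1 x\,\nu(\ud x)<\infty$ from (\ref{incondimm}) together with the smoothness and boundedness of $F$. By contrast, the SDE-theoretic parts (existence, uniqueness, Markov property, generator, branching property) are routine adaptations of the corresponding arguments in the proof of Theorem~1.
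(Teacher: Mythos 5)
Your proposal is correct and follows essentially the same route as the paper: the paper likewise treats existence, uniqueness, the Markov property and the generator as routine adaptations of Theorem~1 (citing Theorem~2.5 of Dawson--Li), and establishes \eqref{bpilaplacek} by applying It\^o's formula to $F(s,\widetilde Z_s)=\exp\{-\widetilde Z_s v_t(s,\lambda,K^{(0)})-\int_s^t\phi(v_t(r,\lambda,K^{(0)})e^{-K^{(0)}_r})\,\ud r\}$, checking that the $\mathtt{d}$-drift, the $\nu$-jump term and the $\phi$-correction cancel so that the boundedness of $F$ and Picard--Lindel\"of give a true martingale whose evaluation at $s=0$ and $s=t$ yields the identity. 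Your cancellation bookkeeping matches the paper's computation exactly, and in fact you supply slightly more detail than the published argument.
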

\begin{proof}We just explain the main steps of the proof, since it follows from similar arguments as those used in Theorem 1. The existence of a strong non-negative solution of (\ref{cbibre}) is  analogous (Theorem 2.5 in \cite{DaLi}) and we omitted. The uniqueness implies the strong Markov property and by It\^o's formula it is easy to show that  the infinitesimal generator of $Z$ satisfies (\ref{bpigenerador}). 

\noindent The branching property of $Z_t$ conditioned on $K^{(0)}$, is due to the CBI-process. Let $\widetilde{Z}_{t}=Z_{t}e^{-K^{(0)}_{t}}$ and $F\in C^{1,2}(\R_+, \bar{\R}_+)$, It\^o's formula  again guarantees that $F(t,\widetilde{Z}_{t})$ conditioned on $K^{(0)}$ is a local martingale if and only if for every $t\geq 0$,
\[
\begin{split}
0&=\int_{0}^t \left(\frac{\partial}{\partial t}F(s, \widetilde{Z}_s)+\mathtt{d}\frac{\partial}{\partial x}F(s, \widetilde{Z}_s)+\gamma^2 e^{-K^{(0)}_s}\widetilde{Z}_s\frac{\partial^2}{\partial x^2}F(s, \widetilde{Z}_s) \right)\ud s  \\
&\qquad+\int_0^t\int_0^{\infty} Z_s\left( F(s, \widetilde{Z}_s+ze^{-K^{(0)}_s})-F(s, \widetilde{Z}_s)-\frac{\partial}{\partial x}F(s, \widetilde{Z}_s)ze^{-K^{(0)}_s}\right)\mu(\ud z)\ud s\\
&\qquad+\int_0^t\int_0^{\infty} \left( F(s, \widetilde{Z}_s+ze^{-K^{(0)}_s})-F(s, \widetilde{Z}_s)\right)\nu(\ud z)\ud s.
\end{split}
\]
We choose $F(s,x)=\Expo{-xv_t(s,\lambda,K^{(0)} )-\int_s^t\phi(v_t(r,\lambda,K^{(0)} )e^{-K^{(0)}_r})\ud r}$ where $v_t(s,\lambda,K^{(0)})$ is differentiable with respect to the variable $s$, non-negative and such that  $v_t(t,\lambda, K^{(0)})=\lambda$ for all $\lambda\geq 0$. We observe that $F$ is bounded, therefore, $(F(s,\widetilde{Z}_s),s\leq t)$ conditioned on $K^{(0)}$ is a martingale if and only if 
\begin{align*}
\frac{\partial}{\partial s}v_t(s,\lambda,K^{(0)})&=\gamma^2(v_t(s,\lambda,K^{(0)}))^2e^{-K^{(0)}_s}\\
&+ e^{K^{(0)}_s}\int_{0}^{\infty}\left(e^{-e^{-K^{(0)}_s}v_t(s,\lambda, K^{(0)})z}-1+e^{-K^{(0)}_s}v_t(s,\lambda, K^{(0)})z\right)\mu(\ud z),
\end{align*}
which is equivalent to the backward differential equation given  in  (\ref{bpibackward}). The existence and unicity of $v_t$ follows from the Picard-Lindelöf theorem. 

\noindent Therefore, the process $\left(\Expo{-\widetilde{Z}_{s}v_{t}(s,\lambda, K^{(0)})}, 0\leq s\leq t\right)$ conditioned on $K^{(0)}$ is a martingale, and hence 
$$\Expconx{z}{\Expo{-\lambda \widetilde{Z_t}}}{K^{(0)}}=\Expo{-zv_t(0,\lambda,K^{(0)})-\int_0^t\phi(v_t(r,\lambda,K^{(0)} )e^{-K^{(0)}_r})\ud r}.$$
\end{proof}

We finish this section with some examples of  CBIBRE, all of them related to the stable CBBRE. \\

{\bf Example 4. (Stable CBIBRE)}  Here we assume that the branching and immigration  mechanisms  are of the form $\psi(\lambda)=-\alpha\lambda+c\lambda^{\beta+1}$  and $\phi(\lambda)=\kappa\lambda^{\beta}$, where $\beta\in (0,1)$, $c, \kappa>0$  and  $a\in \mathbb{R}$.  Hence,   the stable CBIBRE-process is given as the unique non-negative strong solution of the stochastic differential equation
\[
\begin{split}
Z_t=&Z_0 +\int^t_0 \alpha Z_s\ud s +\sigma\int_0^t  Z_s\ud B^{(e)}_s+\int_0^t\int_0^{\infty}\int_0^{Z_{s-}}z\widetilde{N}(\ud s,\ud z,\ud u)+\int_0^t\int_0^{\infty}z M(\ud s,\ud z),
\end{split}
\]
where $B^{(e)}$ is a Brownian motion, and $N$ and $M$ are two independent Poisson random measures with intensities
\[
\frac{c \beta(\beta+1)}{\Gamma(1-\beta)} \frac{1}{z^{2+\beta}}\ud s\ud z\ud u \qquad\textrm{and}\qquad \frac{\kappa \beta}{\Gamma(1-\beta)}\frac{1}{z^{1+\beta}} \ud s\ud z.
\]
Its associated infinitesimal generator $\mathcal{A}$ satisfies, for every $f\in C^2(\R_+),$
\begin{equation}\label{stablebpigenerador}\begin{split}
\mathcal{A}f(x)&=\frac{1}{2}\sigma^2x^2f''(x)+x\alpha f'(x)+\frac{\kappa \beta}{\Gamma(1-\beta)}\int_{(0,\infty)}(f(x+z)-f(x))\frac{ \ud z}{z^{1+\beta}}\\
&\hspace{4cm}+x\frac{c \beta(\beta+1)}{\Gamma(1-\beta)} \int_{(0,\infty)} \left(f(x+z)-f(x)-zf'(x)\right)\frac{\ud z}{z^{2+\beta}}.
\end{split}
\end{equation}
From (\ref{bpilaplacek}) we get the following a.s. identity
\begin{equation}\label{stablecbi}
\begin{split}
\Expconx{z}{\Expo{-\lambda Z_t e^{-K^{(0)}_t}}}{K^{(0)}}=
&\Expo{-z\left(\beta c\int_0^te^{-\beta K^{(0)}_s}\ud s+\lambda^{-\beta}\right)^{-1/\beta}}\\
&\times\Expo{ -\frac{\kappa}{\beta c}\ln\left(\beta c\lambda^\beta \int_0^te^{-\beta K^{(0)}_s}\ud s+1\right)}.
\end{split}
\end{equation}
If we take limits as $z$ goes to $ 0$, we deduce that the entrance law at $0$ of the process $(Z_te^{-K_t^{(0)}}, t\ge 0)$ satisfies
\[
\Expconx{0}{\Expo{-\lambda Z_t e^{-K^{(0)}_t}}}{K^{(0)}}=\Expo{ -\frac{\kappa}{\beta c}\ln\left(\beta c\lambda^\beta \int_0^te^{-\beta K^{(0)}_s}\ud s+1\right)}.
\]
If we take  limits as $\lambda$ goes to $\infty $ in (\ref{stablecbi}),  we obtain 
$$\mathbb{P}_z\Big(Z_t>0\Big|K^{(0)}\Big)=1, \qquad \textrm{for }\quad z\ge 0.$$
Similarly if we take limits as $\lambda$ goes to $0$ in (\ref{stablecbi}), we deduce
$$\mathbb{P}_z\Big(Z_t<\infty\Big|K^{(0)}\Big)=1, \qquad \textrm{for }\quad z\ge 0.$$
In other words, the stable CBIBRE is conservative and positive at finite time a.s.

An interesting question is to study the long-term behaviour of the  stable CBIBRE. Now, if we take limits as $t$ goes to $\infty$ in (\ref{stablecbi}), we deduce that when $\mathbf{m}>0$, 
\[
\begin{split}
\mathbb{E}_z\left[\Expo{-\lambda \lim_{t\to\infty}Z_t e^{-K^{(0)}_t}}\right]=&\mathbb{E}\Bigg[\Expo{-z\left(\beta c\int_0^\infty e^{-\beta K^{(0)}_s}\ud s+\lambda^{-\beta}\right)^{-1/\beta}}\\
&\times\Expo{ -\frac{\kappa}{\beta c}\ln\left(\beta c\lambda^\beta \int_0^\infty e^{-\beta K^{(0)}_s}\ud s+1\right)}\Bigg],
\end{split}
\]
where we recall that $ \int_0^\infty e^{-\beta K^{(0)}_s}\ud s$ has the same law as  $\Big(2\Gamma_{-\eta}\Big)^{-1}$, with  $\Gamma_{v}$ is a Gamma r.v. with parameter $v$. In other words, $Z_t e^{-K^{(0)}_t}$ converges in distribution to a r.v. whose Laplace transform is given by the previous identity.

If $\mathbf{m}\le 0$, we deduce 
\[
\lim_{t\to\infty}Z_t e^{-K^{(0)}_t}=\infty,\qquad \mathbb{P}_z-\textrm{a.s.}
\]
We observe that when $\mathbf{m}= 0$, the process $K^{(0)}$ oscillates implying that
\[ 
\lim_{t\to\infty}Z_t =\infty,\qquad \mathbb{P}_z-\textrm{a.s.}
\]
\\

{\bf Example 5. (Stable CBBRE Q-process)}. Here, we  assume $\mathbf{m}\le -\sigma^2$. From Proposition \ref{qprocess}, we deduce that the form of the infinitesimal generator of the stable CBBRE Q-process, here denoted by $\mathcal{L}^\natural$, satisfies for $f\in \textrm{Dom}(\mathcal{L})$
\[
\begin{split}
\mathcal{L}^\natural f(z)&=\mathcal{L}f(z)+z^2\sigma^2f^\prime(z)\frac{U^\prime(z)}{U(z)}\\
&\qquad+\frac{c \beta(\beta+1)}{\Gamma(1-\beta)}\frac{z}{U(z)}\int_0^\infty \Big(f(y+z)-f(z)\Big)\Big(U(z+y)-U(z)\Big)\frac{\ud y}{y^{2+\beta}},
\end{split}
\]
where $U$ was defined as
\[
U(z)=
\left\{
\begin{array}{ll}
\vspace{0.2cm}z \displaystyle\frac{\sqrt{2}}{\sqrt{\pi}\beta\sigma}\mathbf{k}\Gamma\left(\frac{1}{\beta}\right) & \mbox{if } \mathbf{m}=-{\sigma^2},\\
\vspace{0.2cm} z \mathbf{k}\displaystyle\frac{\Gamma(\eta-1/\beta)}{\Gamma(\eta-2/\beta)} & \mbox{if }  \mathbf{m}<-{\sigma^2}.
\end{array}
\right.
\]
Replacing the form of $U$ in the infinitesimal generator $\mathcal{L}^\natural $ in both cases, we get
\[
\begin{split}
\mathcal{L}^\natural f(z)&=\frac{1}{2}\sigma^2z^2f''(z)+(\alpha+\sigma^2)zf'(z)+\frac{c \beta(\beta+1)}{\Gamma(1-\beta)}\int_0^\infty \Big(f(z+x)-f(x)\Big)\frac{\ud z}{x^{1+\beta}}\nonumber\\
&\qquad+z\frac{c \beta(\beta+1)}{\Gamma(1-\beta)} \int_{(0,\infty)} \left(f(x+z)-f(z)-xf'(z)\right)\frac{\ud x}{x^{2+\beta}}.
\end{split}
\]
From the form of the infinitesimal generator  of the stable CBIBRE given in (\ref{stablebpigenerador}), we deduce that the stable CBBRE Q-process is a stable CBIBRE  with branching and immigration mechanisms given by 
\[
\psi(\lambda)=-(\sigma^2+\alpha)\lambda +c\lambda^{1+\beta} \qquad\textrm{and} \qquad \phi(\lambda)=c(\beta+1)\lambda^{\beta},
\]
and random environment  $(\sigma B_t^{(e)}, t\ge0)$. \\


\noindent \textbf{Acknowledgements}\\

\noindent SP thanks the  Department of Mathematical Sciences of the University of Bath where part of this work was done.  SP and JCP acknowledge support from the  Royal Society  and  SP also acknowledge support from  CONACyT-MEXICO Grant 351643. Both authors are grateful to  anonymous referees
for their suggestions which helped improved the presentation of this paper.

\end{document}